\documentclass[11pt]{article}

\usepackage[T1]{fontenc}
\usepackage[utf8]{inputenc} 
\usepackage[english]{babel}

\usepackage[margin=1in]{geometry}

\usepackage{amsmath,amssymb,amsthm,mathtools}

\usepackage{xcolor}

\usepackage{graphicx}
\usepackage{subcaption}

\DeclareGraphicsExtensions{.pdf,.png,.jpg,.jpeg}

\graphicspath{{./}{figures/}{Figures/}{images/}{img/}{plots/}{Plots/}}

\newcommand{\incfig}[2][]{%
  \IfFileExists{#2}{\includegraphics[#1]{#2}}{%
    \fbox{\texttt{Missing file: #2}}%
  }%
}

\usepackage{float} 

\usepackage{enumitem}
\setlist{nosep}

\usepackage[colorlinks=true,linkcolor=blue,citecolor=red,urlcolor=blue]{hyperref}
\usepackage{url}

\usepackage{relsize}
\usepackage{verbatim}
\usepackage{tikz-cd}
\usepackage{graphicx}
\usetikzlibrary{arrows.meta}

\tikzcdset{
  cells={nodes={
    draw,
    rounded corners,
    align=center,
    inner sep=5pt,
    text width=4cm,
    minimum height=1.9cm,
    font=\small
  }},
  every arrow/.append style={line width=0.5pt},
  labels={font=\small}
}

\newcommand{\R}{\mathbb{R}}
\DeclareMathOperator*{\argmin}{argmin}

\newtheorem{thm}{Theorem}[section]

\newtheorem{lem}[thm]{Lemma}
\newtheorem{cor}[thm]{Corollary}

\theoremstyle{definition}
\newtheorem{ass}{Assumption}

\newtheorem{defn}[thm]{Definition}
\newtheorem{rem}[thm]{Remark}

\theoremstyle{remark}

\numberwithin{equation}{section}

\title{Geometric Asymptotics of Score Mixing and Guidance in Diffusion Models}

\author{
  Kang Liu\thanks{Universit\'e Bourgogne Europe, CNRS, Institut de Math\'ematiques de Bourgogne, 21000 Dijon, France. \texttt{kang.liu@u-bourgogne.fr}}
  \and
  Enrique Zuazua\thanks{[1] Friedrich--Alexander--Universit\"at Erlangen--N\"urnberg, Department of Mathematics, Chair for Dynamics, Control, Machine Learning, and Numerics (Alexander von Humboldt Professorship), 91058 Erlangen, Germany; \newline [2] Universidad Aut\'onoma de Madrid, Departamento de Matem\'aticas, 28049 Madrid, Spain; \newline [3] Chair of Computational Mathematics, Fundaci\'on Deusto, 48007 Bilbao, Basque Country, Spain. \texttt{enrique.zuazua@fau.de}}
}

\date{}

\begin{document}

\maketitle

\begin{abstract}
Diffusion models are routinely guided in practice by combining multiple score fields, yet the mathematical structure of score mixing is still poorly understood. We study the small-time generation dynamics driven by mixed scores
\[
s=\lambda\,\nabla\log u_1+(1-\lambda)\,\nabla\log u_2,\qquad \lambda\ge 0,
\]
in the heat-flow framework, where \(u_1,u_2\) are heat evolutions of two compactly supported probability measures. This single formulation covers both the mixture-of-experts regime \((0\leq \lambda\leq 1)\) and the classifier-free guidance regime \((\lambda>1)\). Exploiting a Laplace--Varadhan principle under a similarity-time rescaling, we show that the small-time generation dynamics is governed by the explicit geometric potential
\[
\Phi_\lambda=\lambda d_1^2+(1-\lambda)d_2^2,
\]
which depends only on the supports of the initial measures and on the mixing parameter. This gives a rigorous reduction from a singular, non-autonomous score-driven dynamics to autonomous Clarke-type subgradient inclusions. In the empirical setting of finite Dirac mixtures, the limiting potential is piecewise quadratic with a Voronoi-type structure; this rigidity yields convergence of all autonomous limiting trajectories to critical points and a conditional convergence criterion for the original generation flow toward local minimizers of the potential, with rate \(\mathcal O(\sqrt t)\) in the smooth stable case.
\end{abstract}

\tableofcontents
\bigskip



\section{Introduction}

\subsection{Background and motivation}

Over the last few years, diffusion models have reshaped the landscape of generative AI, achieving state-of-the-art performance in image, audio, and multimodal generation~\cite{ho2020denoising,song2019generative,song2021scorebased} and complementing alternative paradigms such as GANs and VAEs~\cite{goodfellow2014generative,kingma2014auto,bishop1995neural}. At a high level, these models learn to transform simple Gaussian noise into structured samples that approximate a complex and unknown data distribution. Algorithmically, they are often formulated through stochastic differential equations (SDEs) driven by an estimated \emph{score function}, i.e., the gradient of the log-density of suitably noised data~\cite{song2019generative,song2021scorebased}. The estimation of such scores is closely related to score matching~\cite{hyvarinen2005estimation,kingma2010regularized,vincent2011connection}.  

From an analytical viewpoint, this framework is naturally connected to classical partial differential equations (PDEs), in particular to the heat equation and the associated Fokker--Planck dynamics~\cite{anderson1982reverse,liuzuazua2025}. This PDE perspective provides a natural setting in which questions of well-posedness, stability, and asymptotic behavior can be rigorously addressed.

In the companion work~\cite{liuzuazua2025}, we developed a PDE-based framework for score-based diffusion models driven by the heat equation. There, the heat flow associated with the data distribution is taken as a minimal forward model, and its score field, the gradient of the log of the heat flow, is viewed as the idealized driving vector field for reverse-time generation. This leads to an interpretation of diffusion models as a backward Fokker--Planck evolution or, in the deterministic limit, as a non-autonomous gradient flow driven by a singular potential.

Within this setting, tools from geometric analysis and entropy methods become available. In particular, Li--Yau-type differential inequalities~\cite{li1986parabolic} control the divergence of the score and yield robust well-posedness and $L^p$-stability estimates for the backward dynamics. In the spirit of hypocoercivity and functional inequalities~\cite{villani2009hypocoercivity,klartag2025strong}, entropy-based arguments in terms of the Kullback--Leibler divergence~\cite{conforti2025kl,chen2023sampling,lee2022convergence,benton2024nearly} show that reverse-time trajectories concentrate near the data manifold as the terminal time is approached.

\medskip

In many modern applications, diffusion models are not used as isolated score
fields, but are instead guided, composed, or modified at sampling time by
combining several score-like vector fields \cite{ho2021classifierfree,dhariwal2021diffusion}. This naturally leads to the
mechanism of \textbf{score mixing}:
\[
s = \lambda s_1 + (1-\lambda)s_2, \qquad \lambda \geq 0.
\]

Despite its widespread empirical success, the mathematical structure of score mixing and its impact on the associated generation dynamics remain largely unexplored. The central question addressed in this work is:

\begin{quote}
\emph{What is the intrinsic geometric and dynamical effect of combining score fields?}
\end{quote}

Working in the heat-flow setting allows us to isolate the mechanism of score
mixing in a transparent mathematical framework. The main contribution of this
paper is a geometric reduction principle for guided score-based generation:
after a suitable similarity-time rescaling, the singular non-autonomous dynamics
driven by mixed heat-flow scores is asymptotically governed by an autonomous
nonsmooth dynamics associated with a geometric distance potential. This
reduction shows that, in the small-time regime, the generation dynamics
asymptotically forgets the full analytic structure of the heat flow and retains
only the geometry of the supports of the initial measures, together with the
mixing parameter \(\lambda\).

More precisely, the main results of the paper are as follows:
\begin{itemize}
    \item We identify the geometric small-time limit of score-mixing dynamics.
    Under a uniform lower small-ball mass condition on the initial measures, the
    rescaled mixed heat-flow potential converges, as \(t\to0^+\), to an explicit
    distance potential \(\Phi_\lambda\), determined only by the supports of the
    data and by the mixing parameter \(\lambda\). The key mechanism behind this
    reduction is the Laplace--Varadhan principle.

    \item We derive the corresponding autonomous nonsmooth limiting dynamics.
    Every time-shift limit of the rescaled generation flow is a Carath\'eodory
    solution of a Clarke-type subgradient inclusion: the genuine Clarke
    subgradient flow in the MoE regime and an outer Clarke inclusion in the CFG
    regime.

    \item In the empirical setting of finite Dirac mixtures, we exploit the
    piecewise quadratic, Voronoi-type structure of \(\Phi_\lambda\) to prove
    convergence of every global solution of the autonomous limiting inclusion to
    a critical point. For the original non-autonomous generation flow, we prove a
    conditional convergence criterion: if non-minimizing critical points are
    excluded from the \(\omega\)-limit set, then the flow converges to a local
    minimizer of \(\Phi_\lambda\). In the smooth stable case, we further obtain
    the convergence rate \(\mathcal O(\sqrt t)\).

   \item We complement the deterministic analysis with PDE and stochastic
perspectives. On the PDE side, we relate Li--Yau-type Hessian bounds to the
semiconcavity and Hamilton--Jacobi structure of the rescaled logarithmic
potential, and derive \(L^p\)-energy estimates for the backward
Fokker--Planck equation, revealing a polynomial-versus-exponential stability
dichotomy between the MoE and CFG regimes. On the stochastic side, we interpret
the noisy rescaled dynamics as a vanishing-viscosity perturbation of the
limiting geometric dynamics, as a guide for future work rather than as a
theorem of the present paper.
\end{itemize}

The analysis is carried out for exact heat-flow scores, which provide a
tractable mathematical proxy for practical score-based models. We do not claim
to analyze learned neural scores or general diffusion SDEs in full generality.
Rather, our goal is to isolate a robust geometric mechanism underlying score
mixing and guidance.

\subsection{Related work}

Classifier guidance was introduced in diffusion models as a way to produce
``low-temperature'' samples by correcting the model score with the gradient of
a label-conditional density, thereby steering the reverse dynamics toward
label-consistent regions~\cite{dhariwal2021diffusion}. This strategy, however,
requires an additional classifier or conditional model, and may become less
effective when the conditioning variable is high-dimensional.

Classifier-free guidance (CFG) avoids this external classifier by combining
conditional and unconditional scores through a linear extrapolation
\cite{ho2021classifierfree}. In the notation of the present paper, this
corresponds to the score-mixing regime \(\lambda>1\). CFG has since become a
standard component of modern text-to-image diffusion pipelines. A recent
fine-grained analysis of guidance in simple mixture models was carried out in
\cite{chidambaram2024does}. There, the authors show that guidance does not
merely sample from a naively tilted distribution, but can induce a geometric
bias toward boundary or archetypal regions of the target component, and may
lead to off-support behavior when the guidance strength is large and the score
estimate is imperfect. This viewpoint is closely related in spirit to the
geometric mechanism studied here, where the CFG regime is governed at small
time by the extrapolative distance potential.

The averaging regime \(0\le \lambda\le 1\), which we refer to as the
mixture-of-experts (MoE) regime, has also been explored; for instance, in
\cite{rahimi2025scoremix}, mixed scores are used to generate synthetic data
that improves downstream recognition. Conceptually, this averaging viewpoint is
also reminiscent of aggregation principles in federated learning, such as
FedAvg~\cite{mcmahan2017communication}, where several sources are combined
through weighted averaging.

On the theoretical side, non-mixed diffusion models have been studied from
PDE, stochastic, probabilistic, and numerical perspectives; see, for instance,
\cite{liuzuazua2025,conforti2025kl,chen2023sampling,lee2022convergence,
benton2024nearly}. In the heat-flow framework of~\cite{liuzuazua2025}, the
generation process driven by a single heat-flow score was shown, under mild
assumptions on the initial datum \(u_0\), to return almost surely to
\(\mathrm{supp}(u_0)\). The proof relies on entropy contraction estimates for
the backward Fokker--Planck equation, in the spirit of hypocoercivity and
data-processing inequalities~\cite{villani2009hypocoercivity,
klartag2025strong}. Such an entropy-based approach is no longer directly
available for mixed scores, since the natural product candidate
\(u_1^\lambda u_2^{1-\lambda}\) is not in general a normalized probability
density; see Remark~\ref{rem:no_entropy_mixing}. This obstruction motivates
the geometric approach developed here.

Our analysis is based on the asymptotically autonomous structure of the
generation dynamics. After a similarity-time rescaling
\cite{zuazua2020asymptotic}, the non-autonomous reverse dynamics admits
time-shift limits described by autonomous nonsmooth differential inclusions.
This point of view is in the spirit of the classical theory initiated by
Markus~\cite{markus1956asymptotically}; see also
Galaktionov--V\'azquez~\cite{galaktionov1991asymptotic}. The limiting vector
fields are governed by geometric potentials obtained from a
Laplace--Varadhan principle for Gaussian convolutions; see, for instance,
\cite[Sec.~4.3]{DemboZeitouni1998} and
\cite[Sec.~6.4]{bender2013advanced}. Since these potentials are generally
nonsmooth, the limiting dynamics is formulated using Clarke and outer-Clarke
differential inclusions~\cite{clarke1989optimization}. Thus, the present work
connects score-based diffusion models with geometric asymptotics and
nonsmooth dynamical systems.

\paragraph{Relation to the companion paper~\cite{liuzuazua2025}.}
The present paper builds on the heat-flow formulation introduced in
\cite{liuzuazua2025}, but addresses a different problem and requires different
tools. The companion paper treats the case of a single heat-flow score, where
entropy contraction, Li--Yau-type inequalities, and backward Fokker--Planck
estimates provide the main analytical mechanism. By contrast, the present work
studies mixed scores, including both the MoE regime \(0\le\lambda\le1\) and the
CFG regime \(\lambda>1\). In this setting, the entropy method is no longer the
natural organizing principle, because the product
\(u_1^\lambda u_2^{1-\lambda}\), although it generates the mixed score, is not
in general a normalized probability density and does not yield the same
entropy-contraction mechanism; see Remark~\ref{rem:no_entropy_mixing}.

This is why the present paper develops a different approach, based on
Laplace--Varadhan asymptotics, geometric distance potentials, and nonsmooth
autonomous limiting dynamics. The \(L^p\)-energy estimates also differ from the
single-score case: while the MoE estimate follows the strategy of
\cite[Thm.~3.1]{liuzuazua2025}, the CFG regime involves an additional
exponential amplification factor; see Theorem~\ref{thm:energy_score}.
\subsection{Organization of the paper}

Section~\ref{sec:problem_setting} introduces the heat-flow framework, the mixed-score construction, and the associated deterministic and stochastic generation dynamics.  
Section~\ref{sec:main_results} states the main results, including the similarity-time rescaling, the geometric limiting potential, the limiting autonomous inclusions, and the convergence results in the empirical setting.  
Section~\ref{subsec:pde_viewpoint} complements the dynamical analysis with PDE,
Hamilton--Jacobi, and stochastic viewpoints, through Li--Yau/semiconcavity
structures, energy estimates, and a stochastic-approximation interpretation of
the noisy dynamics.
Section~\ref{sec:numerics} presents numerical experiments illustrating the asymptotic behavior of both deterministic and stochastic generation flows.  
Section~\ref{sec:laplace_varadhan_structure} develops the
Laplace--Varadhan asymptotics and gradient structure underlying the geometric
reduction, and proves the time-shift convergence theorem for the rescaled
generation dynamics.
Section~\ref{sec:autonomous_dynamics} studies the Clarke structure of the limiting geometric potential and the convergence of the corresponding autonomous inclusions.  
Section~\ref{sec:proof_empirical_convergence_and_rate} proves the empirical convergence criterion and the rate estimates near smooth stable minimizers.  
Finally, Section~\ref{conclusionsperspectives} contains concluding remarks and perspectives.

\section{Problem setting: score mixing and guided generation}\label{sec:problem_setting}

In this section, we introduce the mathematical framework for score mixing and
define the associated generation dynamics. This formulation highlights the
interaction between multiple datasets through their score fields and prepares
the ground for the small-time asymptotic analysis developed in
Section~\ref{sec:main_results}, where the dynamics is shown to be governed by
an explicit geometric potential.

\subsection{Mixture of scores in diffusion models}

To make the discussion concrete, let \(u_i=u_i(x,t)\), \(i=1,2\), be two
solutions of the forward heat equation on
\[
Q \coloneqq \mathbb{R}^d\times(0,T),
\]
where \(T>0\) is a fixed finite time horizon:
\begin{equation}\label{eq:heat}
\begin{cases}
\partial_t u_i(x,t)-\Delta u_i(x,t)=0, & (x,t)\in Q,\\[1ex]
u_i(\cdot,0)=u_{0,i}\in \mathcal{P}(\mathbb{R}^d).
\end{cases}
\end{equation}
Here \(\mathcal{P}(\mathbb{R}^d)\) denotes the space of Borel probability
measures on \(\mathbb{R}^d\), and the initial measures \(u_{0,1}\) and
\(u_{0,2}\) represent the two datasets under consideration.

The associated score fields are
\begin{equation}\label{eq:scores}
    s_i(x,t) \coloneqq \nabla \log u_i(x,t),
    \qquad (x,t)\in Q,\quad i=1,2.
\end{equation}
Rather than using each score separately, we consider the mixed score
\begin{equation}\label{eq:mixed-score}
    s^{(\lambda)}
    \coloneqq
    \lambda s_1+(1-\lambda)s_2,
    \qquad \lambda\ge 0,
\end{equation}
where \(\lambda\) is a mixing parameter.

Two regimes will play a central role in the sequel:
\begin{enumerate}
    \item \textbf{Mixture-of-experts (MoE) regime \(\boldsymbol{0\le \lambda\le 1}\).}
    In this case, \(s^{(\lambda)}\) is a convex combination of the two scores.

    \item \textbf{Classifier-free guidance (CFG) regime \(\boldsymbol{\lambda>1}\).}
    In this case, \(s^{(\lambda)}\) is an extrapolation. Writing
    \(\alpha=\lambda-1>0\), one has
    \begin{equation}
        s^{(\lambda)}
        =
        s_1+\alpha(s_1-s_2),
    \end{equation}
    which matches the standard parametrization of classifier-free guidance in
    the diffusion-model literature~\cite{ho2021classifierfree,
    sadat2025no-training-no-problem}.
\end{enumerate}

Equivalently, the mixed score is the logarithmic gradient of a
product-of-experts-type density:
\begin{equation}\label{eq:poe-density}
 s^{(\lambda)}
 =
 \nabla \log u^{(\lambda)},
 \qquad
 u^{(\lambda)}
 \coloneqq
 u_1^{\lambda}u_2^{1-\lambda}.
\end{equation}
For \(0\le\lambda\le1\), this object can be interpreted as a geometric
interpolation between the two heat flows. For \(\lambda>1\), it is an
extrapolated product and is not, in general, a normalized probability density.

This distinction is important analytically. In the convex regime, the Li--Yau
inequality yields one-sided divergence bounds for the mixed score, whereas
such bounds may fail in the extrapolating CFG regime. Moreover, the
entropy-based Fokker--Planck estimates used in the single-score setting of
\cite{liuzuazua2025} do not directly extend to \(u^{(\lambda)}\), precisely
because \(u^{(\lambda)}\) is not generally normalized. This motivates the
strategy adopted in this paper: we analyze the associated characteristic ODEs
and their geometric small-time limits directly.

\subsection{Generation dynamics driven by the mixed score}

The central object of this work is the generation dynamics driven by the mixed
score \(s^{(\lambda)}\). Starting from a prescribed terminal
law \(X_T\sim v_T\), typically chosen to be Gaussian, we consider the reverse-time
stochastic differential equation
\begin{equation}\label{eq:generation}
    dX_t
    =
    -(1+\varepsilon)\,s^{(\lambda)}(X_t,t)\,dt
    +
    \sqrt{2\varepsilon}\,dW_t,
    \qquad t\in(0,T),
\end{equation}
to be integrated from \(t=T\) down to \(t=0\). Here \(\varepsilon\ge0\) and
\((W_t)_{t\ge0}\) is a standard Brownian motion. On every interval
\([t_0,T]\), with \(t_0>0\), existence and uniqueness follow from the smoothness
and strict positivity of the heat flows \(u_i(\cdot,t)\), hence from the
standard Lipschitz theory for SDEs; see, for instance,
\cite[Ch.~5]{karatzas1991brownian}. The difficulty is concentrated in the
singular regime \(t\to0^+\), where the scores develop the small-time structures
analyzed in this paper.

A generated sample is obtained by integrating~\eqref{eq:generation} backward in
time, for instance by Euler--Maruyama, and evaluating the trajectory at
\(t=0\), or at a small positive stopping time in numerical implementations.

The density \(\rho_\varepsilon(t,\cdot)\) associated with
\eqref{eq:generation} satisfies the backward Fokker--Planck equation
\begin{equation}\label{eq:FP_t}
\partial_t \rho_{\varepsilon}
+\varepsilon\,\Delta \rho_{\varepsilon}
-(1+\varepsilon)\,\operatorname{div}
\bigl(\rho_{\varepsilon}\,\nabla V_\lambda\bigr)=0,
\end{equation}
where
\begin{equation}\label{eq:V_lambda_def_generation}
    V_\lambda(x,t)
    \coloneqq
    \log u^{(\lambda)}(x,t)
    =
    \lambda\log u_1(x,t)+(1-\lambda)\log u_2(x,t).
\end{equation}

In the deterministic case \(\varepsilon=0\), the generation process reduces to
the characteristic ODE
\begin{equation}\label{eq:generation_ode}
\dot X_t = -\nabla V_\lambda(X_t,t),
\end{equation}
whose associated transport equation is
\begin{equation}\label{eq:FP_t-hyper}
\begin{cases}
\partial_t \rho
-\operatorname{div}\bigl(\rho\,\nabla V_\lambda(\cdot,t)\bigr)=0,
& (x,t)\in\mathbb{R}^d\times(0,T),\\[2mm]
\rho(T,\cdot)=v_T.
\end{cases}
\end{equation}
This deterministic regime is also relevant computationally, as fast samplers
such as DPM-Solver~\cite{lu2022dpm} are based on ODE-type reverse dynamics.

The potential \(V_\lambda\) is time-dependent and becomes singular as
\(t\to0^+\). Thus, even in the deterministic setting, the behavior of
\eqref{eq:generation_ode} near the terminal generation time is not described by
a standard autonomous gradient-flow picture. The central question is therefore:
\begin{quote}
\emph{What is the asymptotic behavior of the generation trajectories \(X_t\) as
\(t\to0^+\)?}
\end{quote}

The main results in the next section answer this question through a
similarity-time rescaling: the singular non-autonomous dynamics is shown to
converge, in an appropriate time-shift sense, toward an autonomous nonsmooth
dynamics driven by a geometric potential depending only on the supports of the
initial measures and on the mixing parameter \(\lambda\).

\section{Main results: geometric potential and limiting dynamics}\label{sec:main_results}

\subsection{Similarity-time rescaling and geometric limiting objects}\label{subsec:scaling}

To analyze the small-time regime of the generation dynamics
\eqref{eq:generation_ode}, it is convenient to introduce the logarithmic, or
similarity, time variable
\begin{equation}\label{eq:Y}
\tau=\log\!\Big(\frac{T}{t}\Big),
\qquad
Y_\tau\coloneqq X_{Te^{-\tau}}.
\end{equation}
Thus the terminal time \(t=T\) corresponds to \(\tau=0\), while the small-time
regime \(t\to0^+\) corresponds to \(\tau\to+\infty\).

Since \(dt/d\tau=-Te^{-\tau}\), the chain rule gives
\[
\dot Y_\tau
=
-Te^{-\tau}\,\dot X_{Te^{-\tau}}.
\]
Using the generation ODE~\eqref{eq:generation_ode}, we obtain the forward-time
rescaled dynamics
\begin{equation}\label{eq:generation_ode_Y}
\begin{cases}
\displaystyle
\dot Y_\tau
=
Te^{-\tau}\,\nabla V_\lambda\bigl(Y_\tau,Te^{-\tau}\bigr)
=
-\dfrac14\,\nabla F_\lambda\bigl(Y_\tau,Te^{-\tau}\bigr),
& \tau>0,\\[2mm]
Y_0=x_T,
\end{cases}
\end{equation}
where the rescaled potential is defined by
\begin{equation}\label{eq:Flambda_def}
F_\lambda(x,t)
\coloneqq
-4t\,V_\lambda(x,t).
\end{equation}

Consequently, the small-time behavior of \(X_t\) as \(t\to0^+\) is equivalent
to the long-time behavior of \(Y_\tau\) as \(\tau\to+\infty\). In particular,
the two parametrizations have the same set of accumulation points:
\begin{equation}\label{eq:omega_X_equals_omega_Y}
\omega_t(X)=\omega_\tau(Y).
\end{equation}

\begin{defn}[$\omega$-limit set]\label{def:limit_asympstable_omega}
Fix \(T>0\). Let \(X=(X_t)_{t\in(0,T]}\in C((0,T];\R^d)\) and \(Y=(Y_\tau)_{\tau\ge0}\in C([0,\infty);\R^d)\). We define
\[
\omega_t(X)\coloneqq \Bigl\{x^\ast\in\R^d:\ \exists\, t_n\to0^+ \text{ such that } X_{t_n}\to x^\ast\Bigr\},
\]
and
\[
\omega_\tau(Y)\coloneqq \Bigl\{y^\ast\in\R^d:\ \exists\, \tau_n\to\infty \text{ such that } Y_{\tau_n}\to y^\ast\Bigr\}.
\]
\end{defn}

\begin{rem}[Justification of \eqref{eq:omega_X_equals_omega_Y}]
In view of the definitions above, \eqref{eq:omega_X_equals_omega_Y} 
 follows directly from the bijectivity of the similarity-time change of variables. Indeed,
if \(x^\ast\in\omega_t(X)\), then there exists \(t_n\to0^+\) such that \(X_{t_n}\to x^\ast\).
Defining \(\tau_n\coloneqq \log(T/t_n)\), we have \(\tau_n\to\infty\) and
\[
Y_{\tau_n}=X_{Te^{-\tau_n}}=X_{t_n}\to x^\ast,
\]
hence \(x^\ast\in\omega_\tau(Y)\).
The converse inclusion is obtained identically by setting \(t_n\coloneqq Te^{-\tau_n}\).
\end{rem}

\paragraph{Geometric distance potential.}
The rescaled potential \(F_\lambda\) defined in~\eqref{eq:Flambda_def} is the
central object in the small-time analysis. As developed in
Section~\ref{sec:laplace_varadhan_structure}, Laplace--Varadhan asymptotics for
Gaussian convolutions show that, as \(t\to0^+\), \(F_\lambda(\cdot,t)\) is
governed at leading order by a purely geometric object depending only on the
supports of the initial measures and on the parameter \(\lambda\). This motivates
the introduction of the limiting \emph{geometric distance potential}
\(\Phi_\lambda\), defined as the \(\lambda\)-weighted combination of the squared
distances to the two data supports.

\begin{defn}[Geometric distance potential]\label{def:geom_distance_potential}
Let \(A_i\coloneqq \mathrm{supp}(u_{0,i})\) for \(i=1,2\), and define
\[
d_i(x)\coloneqq\mathrm{dist}(x,A_i),\qquad i=1,2.
\]
For \(\lambda\ge0\), we introduce the \emph{geometric distance potential}
\begin{equation}\label{eq:dist_potential}
\Phi_\lambda(x)\coloneqq\lambda\,d_1(x)^2+(1-\lambda)\,d_2(x)^2.
\end{equation}
\end{defn}

The convergence
\begin{equation}\label{asymptoticpotential}
F_\lambda(\cdot,t)=-4t\,V_\lambda(\cdot,t)\to \Phi_\lambda
\qquad \text{as } t\to 0^+,
\end{equation}
is a manifestation of the Laplace--Varadhan principle; see~\cite[Sec.~6.4]{bender2013advanced} and \cite[Sec.~4.3]{DemboZeitouni1998}. Its rigorous presentation is stated in Section~\ref{sec:laplace_varadhan_structure}; see also Figure~\ref{fig:V_vs_Phi_and_det_sto_flows} for a numerical illustration.

 The convergence in \eqref{asymptoticpotential} reveals a fundamental structural simplification and reduction principle: in the small-time regime, the original non-autonomous dynamics is effectively governed by a \emph{time-independent geometric energy landscape}. In other words, the generation mechanism asymptotically forgets the full analytical structure of the heat flow and retains only the geometry of the supports through $\Phi_\lambda$. This provides a rigorous reduction from a high-dimensional, time-dependent score-driven system to a low-complexity autonomous dynamical system.

\begin{rem}
 We emphasize that this nontrivial reduction hinges on the compact-support assumptions on the sets $A_i$. If, instead, both initial measures are sufficiently smooth and have full support on the whole space, the singular behavior of $V_\lambda$ is no longer present. In that regime, consistently with the Li–Yau inequality \cite{li1986parabolic,liuzuazua2025}, $V_\lambda(\cdot,t)$ remains bounded as $t\to0^+$, so that
$F_\lambda(\cdot,t)=-4tV_\lambda(\cdot,t)\to 0$
as $t\to0^+$.
Thus the asymptotic reduction degenerates to the trivial zero limit.
\end{rem}

\begin{rem}[Semiconcavity of the geometric potential]
\label{rem:semiconcavity_geometric_potential}
For any compact set \(A\subset\mathbb R^d\), the squared-distance function
\(d_A^2\) is semiconcave. More precisely,
\( 
d_A^2-\|\cdot\|^2 \quad \text{is concave}.
\)
Indeed, this follows from the identity
\[
d_A(x)^2
=
\|x\|^2
-
2\sup_{a\in A}
\left(
x\cdot a-\frac12\|a\|^2
\right),
\]
since the supremum of affine functions is convex.

Consequently, in the MoE regime \(0\le\lambda\le1\), the potential
\(\Phi_\lambda=\lambda d_1^2+(1-\lambda)d_2^2\) is semiconcave with the same
constant.
This is consistent with the semiconcavity estimate for the rescaled logarithmic
potentials \(F_\lambda(\cdot,t)\), which is the matrix form behind the
Li--Yau inequality.

By contrast, in the CFG regime \(\lambda>1\), the coefficient \(1-\lambda\) is
negative. Hence \(\Phi_\lambda\) is a difference of squared-distance functions,
and semiconcavity is not preserved in general. This loss of semiconcavity is
one of the structural reasons why the MoE regime leads to the genuine Clarke
subdifferential, whereas the CFG regime requires the outer Clarke structure
introduced below.

The semiconcavity discussed above is closely connected to the
Hamilton--Jacobi interpretation of the rescaled logarithmic density and of the
limiting geometric potential. This connection is discussed in
Section~\ref{subsec:liyau_hj_semiconcavity}.
\end{rem}

\paragraph{Clarke subdifferential.}
The limiting potential \(\Phi_\lambda\) is, in general, only piecewise smooth.
Its singularities arise from the non-uniqueness of nearest points in the
supports \(A_1\) and \(A_2\), and are therefore located on Voronoi-type
interfaces in the empirical case. Consequently, the limiting dynamics is not a
classical gradient flow, but a generalized gradient flow. We describe it using
Clarke's nonsmooth calculus~\cite[Thm.~2.5.1]{clarke1989optimization}.

\begin{defn}[Clarke subdifferential and nonsmooth interfaces]
\label{def:Clarke}
Let \(f:\mathbb{R}^d\to\mathbb{R}\) be locally Lipschitz, and let
\(\mathcal D_f\) denote the set of points where \(f\) is differentiable. The
\emph{Clarke subdifferential} of \(f\) at \(x\in\mathbb{R}^d\) is defined by
\begin{equation}\label{eq:Clarke_seq_def}
\partial^C f(x)
\coloneqq
\operatorname{conv}
\Bigl\{
\lim_{k\to\infty}\nabla f(x_k)
:\ x_k\in\mathcal D_f,\ x_k\to x
\Bigr\}.
\end{equation}
We denote by
\begin{equation}\label{eq:ND_f_def}
\mathrm{ND}(f)
\coloneqq
\bigl\{x\in\mathbb{R}^d:\partial^C f(x)\ \text{is not a singleton}\bigr\}
\end{equation}
the nonsmooth, or non-differentiability, set of \(f\).

For the squared-distance functions
\[
d_i(x)=\operatorname{dist}(x,A_i),
\qquad i=1,2,
\]
we define the combined nonsmooth interface by
\begin{equation}\label{eq:ND_A1A2}
\mathrm{ND}(A_1,A_2)
\coloneqq
\mathrm{ND}(d_1^2)\cup\mathrm{ND}(d_2^2).
\end{equation}
\end{defn}

The set \(\mathrm{ND}(A_1,A_2)\) records the points where at least one of the
two squared-distance landscapes is nonsmooth. When \(A_1\) and \(A_2\) are
finite, it is the union of the Voronoi interfaces associated with the two
supports; see Figure~\ref{fig:ND_three_panels}.

\begin{defn}[Outer Clarke subdifferential and critical sets]
\label{def:outer_Clarke}
For the geometric distance potential
\[
\Phi_\lambda(x)
=
\lambda d_1(x)^2+(1-\lambda)d_2(x)^2,
\]
we define its \emph{outer Clarke subdifferential} by
\begin{equation}\label{eq:outer_clarke_Phi}
\widehat{\partial}\,\Phi_\lambda(x)
\coloneqq
\lambda\,\partial^C(d_1^2)(x)
+
(1-\lambda)\,\partial^C(d_2^2)(x).
\end{equation}
We also introduce the corresponding critical sets
\begin{equation}\label{eq:crit_def_main}
\mathrm{Crit}(\Phi_\lambda)
\coloneqq
\bigl\{x\in\mathbb{R}^d:\ 0\in\partial^C\Phi_\lambda(x)\bigr\},
\end{equation}
and
\begin{equation}\label{eq:crit_out_def_main}
\mathrm{Crit}_{\mathrm{out}}(\Phi_\lambda)
\coloneqq
\bigl\{x\in\mathbb{R}^d:\ 0\in\widehat{\partial}\,\Phi_\lambda(x)\bigr\}.
\end{equation}
\end{defn}

The terminology \emph{outer Clarke subdifferential} is specific to the present
paper. It emphasizes that \(\widehat{\partial}\,\Phi_\lambda\) is formed by
taking the Clarke subdifferentials of the two squared-distance functions
separately and then taking their weighted Minkowski sum. In general,
\( 
\partial^C\Phi_\lambda(x)
\subseteq
\widehat{\partial}\,\Phi_\lambda(x),
\)
and the inclusion may be strict at points where \(d_1^2\) and \(d_2^2\)
are nonsmooth, in the extrapolating regime \(\lambda>1\). This
distinction is made precise in Lemma~\ref{lem:clarke_outer_distinction} below.

\begin{figure}[htp]
    \centering
    \begin{subfigure}[t]{0.31\textwidth}
        \centering
        \includegraphics[width=\textwidth]{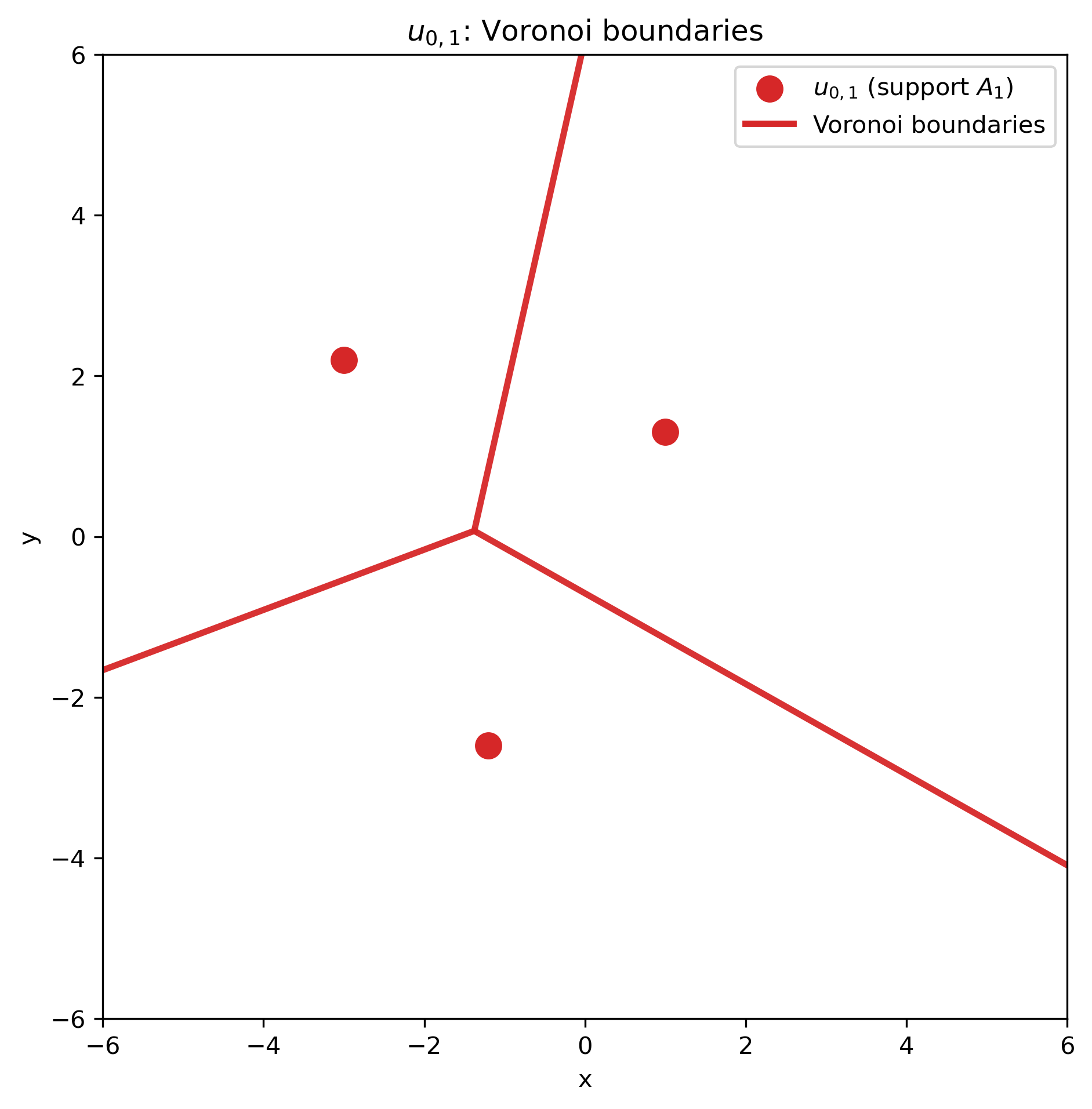}
        \caption{\(\mathrm{ND}(d_1^2)\)}
        \label{fig:ND_A1}
    \end{subfigure}\hfill
    \begin{subfigure}[t]{0.31\textwidth}
        \centering
        \includegraphics[width=\textwidth]{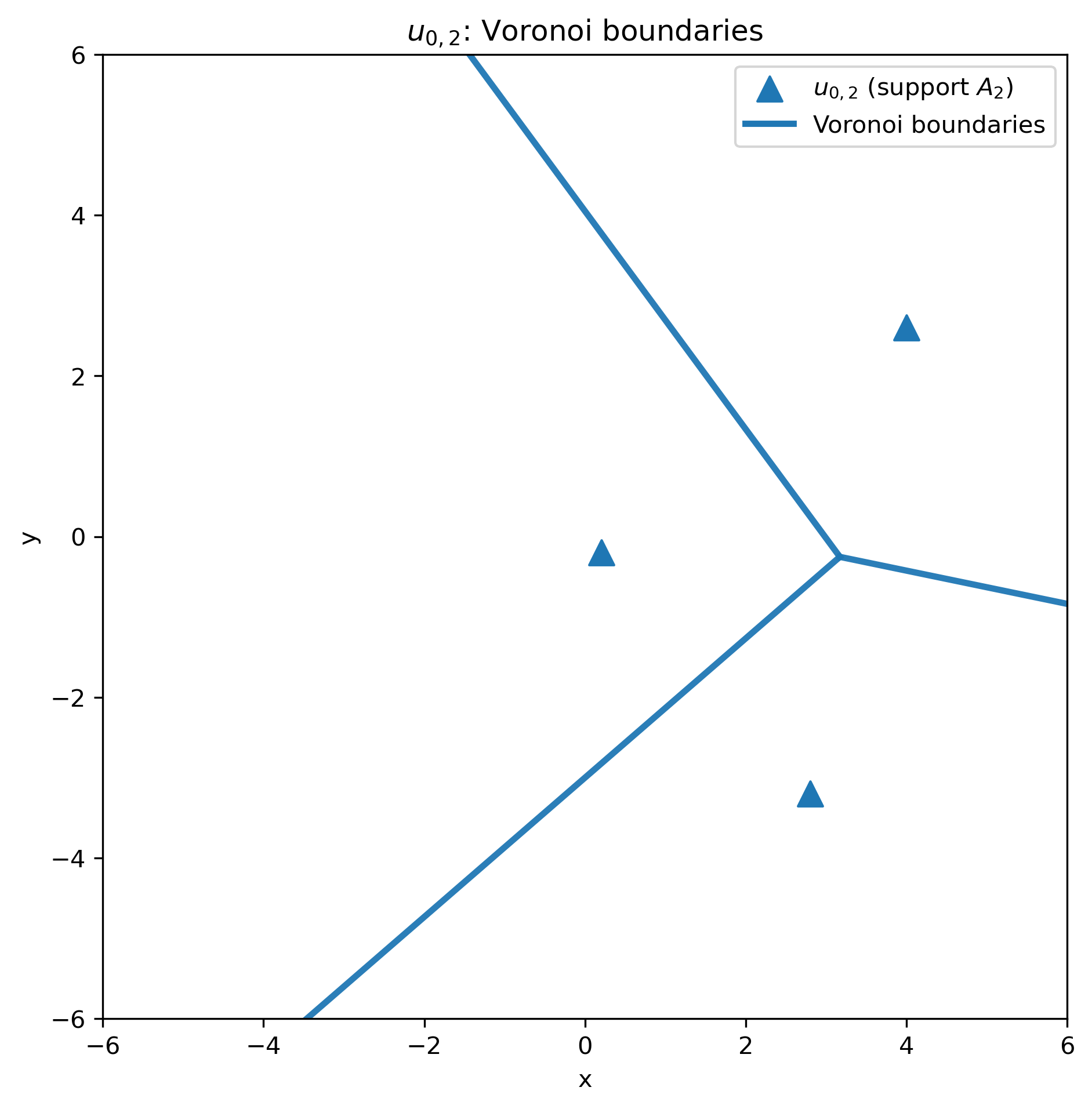}
        \caption{\(\mathrm{ND}(d_2^2)\)}
        \label{fig:ND_A2}
    \end{subfigure}\hfill
    \begin{subfigure}[t]{0.31\textwidth}
        \centering
        \includegraphics[width=\textwidth]{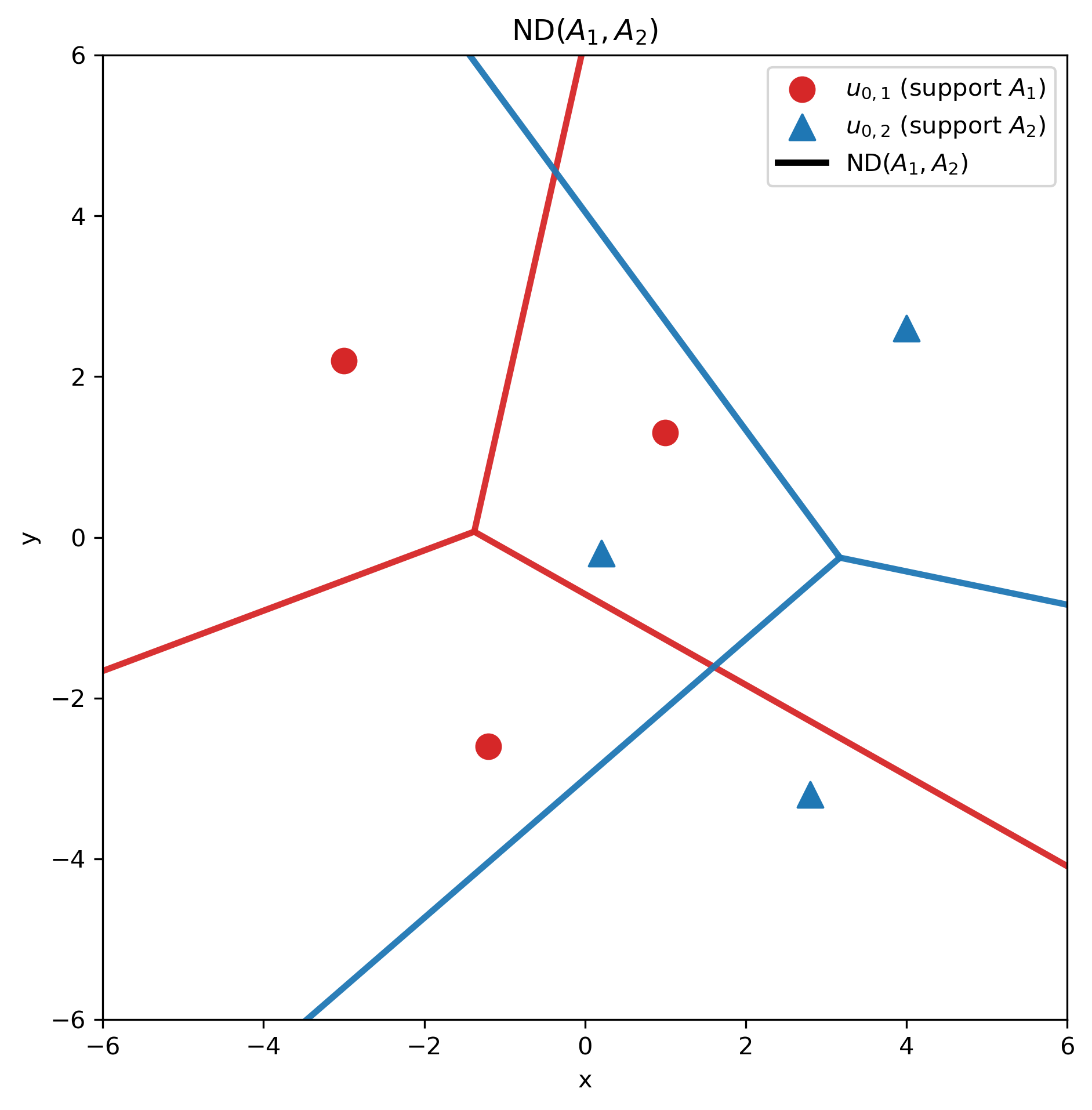}
        \caption{\(\mathrm{ND}(A_1,A_2)\)}
        \label{fig:ND_union}
    \end{subfigure}

    \caption{Non-differentiability sets associated with the empirical supports
    \(A_1=\mathrm{supp}(u_{0,1})\) and \(A_2=\mathrm{supp}(u_{0,2})\).}
    \label{fig:ND_three_panels}
\end{figure}

\begin{lem}[Clarke versus outer Clarke]
\label{lem:clarke_outer_distinction}
Assume that \(A_1\) and \(A_2\) are compact subsets of \(\mathbb{R}^d\). Then:
\begin{enumerate}
    \item If \(0\le \lambda\le 1\), then
    \begin{equation}\label{eq:clarke_outer_moe_equality}
    \partial^C\Phi_\lambda(x)
    =
    \widehat{\partial}\,\Phi_\lambda(x)
    \qquad \forall x\in\mathbb{R}^d.
    \end{equation}

   \item If \(\lambda>1\), then the general Clarke sum rule gives
\begin{equation}\label{eq:clarke_outer_cfg_inclusion_general}
\partial^C\Phi_\lambda(x)
\subseteq
\widehat{\partial}\Phi_\lambda(x)
\qquad \forall x\in\mathbb{R}^d.
\end{equation}
Moreover, if \(A_1\) and \(A_2\) are finite, then this inclusion is an equality
away from the simultaneous interface:
\begin{equation}\label{eq:clarke_outer_cfg_inclusion}
\partial^C\Phi_\lambda(x)
\begin{cases}
=
\widehat{\partial}\Phi_\lambda(x),
& \forall x\in
\mathbb{R}^d\setminus
\bigl(\mathrm{ND}(d_1^2)\cap\mathrm{ND}(d_2^2)\bigr),\\[0.6em]
\subseteq
\widehat{\partial}\Phi_\lambda(x),
& \forall x\in
\mathrm{ND}(d_1^2)\cap\mathrm{ND}(d_2^2).
\end{cases}
\end{equation}
\end{enumerate}
\end{lem}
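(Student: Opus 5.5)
The plan rests on two classical facts from Clarke's calculus \cite{clarke1989optimization} and on the semiconcavity noted in Remark~\ref{rem:semiconcavity_geometric_potential}. The first fact is the general sum rule: for locally Lipschitz \(f,g\) and scalars \(\alpha,\beta\in\mathbb R\),
\[
\partial^C(\alpha f+\beta g)(x)\subseteq\alpha\,\partial^C f(x)+\beta\,\partial^C g(x).
\]
Here we use that \(\partial^C(\beta g)=\beta\,\partial^C g\) for any real \(\beta\), including negative values. The second fact is that equality holds when \(f\) and \(g\) are both Clarke regular and \(\alpha,\beta\ge0\). Both \(d_1^2\) and \(d_2^2\) are locally Lipschitz because \(A_i\) is compact. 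Applying the sum rule with \(\alpha=\lambda\) and \(\beta=1-\lambda\) gives the inclusion \eqref{eq:clarke_outer_cfg_inclusion_general} for every \(\lambda\ge0\), and in particular for \(\lambda>1\).

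For the MoE regime we need regularity. By Remark~\ref{rem:semiconcavity_geometric_potential}, \(d_i^2=\|x\|^2-g_i\), where \(g_i\) is convex. The function \(-d_i^2\) is therefore convex plus smooth, hence Clarke regular, and
\[
\partial^C(-d_i^2)=-\partial^C(d_i^2).
\]
For \(0\le\lambda\le1\) we write \(-\Phi_\lambda=\lambda(-d_1^2)+(1-\lambda)(-d_2^2)\). This is a nonnegative combination of regular functions, so the equality case of the sum rule applies. Using \(\partial^C(-f)=-\partial^C f\) once more yields \eqref{eq:clarke_outer_moe_equality}. An alternative route is to work directly with the convex functions \(g_i\): there \(\partial^C=\partial\) is the convex subdifferential, and the Moreau--Rockafellar sum rule holds with equality for finite convex functions.

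For the finite case with \(\lambda>1\), the plan is a case split.
\begin{itemize}
\item If \(x\notin\mathrm{ND}(d_1^2)\cup\mathrm{ND}(d_2^2)\), both functions are strictly differentiable near \(x\), so both sides reduce to \(\{\nabla\Phi_\lambda(x)\}\).
\item If exactly one of them is nonsmooth at \(x\), say \(d_2^2\) is \(C^1\) near \(x\), then \(\partial^C(f+h)=\partial^C f+\nabla h(x)\) whenever \(h\) is \(C^1\) near \(x\). This exact rule, together with the scaling rule for a negative coefficient, gives equality.
\end{itemize}

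The one point needing care is whether "\(x\notin\mathrm{ND}(d_2^2)\)" really gives \(C^1\) regularity near \(x\), not merely a singleton subdifferential at \(x\). For finite \(A_2=\{a_1,\dots,a_m\}\), we have \(d_2^2=\min_j\|x-a_j\|^2\). This function equals the single smooth quadratic \(\|x-a_j\|^2\) near \(x\) when the nearest point \(a_j\) is unique, because the strict inequality to the other points persists in a neighbourhood. When several nearest points tie at \(x\), gradients \(2(x-a_j)\) from distinct regions accumulate at \(x\), so \(\partial^C(d_2^2)(x)=\operatorname{conv}\{2(x-a_j)\}\) over the active indices, which is not a singleton. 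Hence \(\mathrm{ND}(d_2^2)\) is exactly the set where the nearest point is not unique, and off it \(d_2^2\) is locally a smooth quadratic.

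I expect this step to be the main obstacle: one must check that the near-point characterization of \(\mathrm{ND}(d_i^2)\) matches the definition \eqref{eq:ND_f_def}. On the simultaneous interface \(\mathrm{ND}(d_1^2)\cap\mathrm{ND}(d_2^2)\), only the general inclusion from the first paragraph is claimed, so nothing more is needed there.
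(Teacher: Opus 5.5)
Your proposal is correct. The inclusion for all $\lambda\ge0$ and the CFG equality off the simultaneous interface follow the paper's argument: the general sum rule with the scaling rule for the negative coefficient, then the exact sum rule when one squared distance is smooth at $x$.

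Your proof of the MoE equality takes a different route. The paper writes $\Phi_\lambda$ as the minimum of the quadratics $\lambda\|x-a_1\|^2+(1-\lambda)\|x-a_2\|^2$ over the compact set $A_1\times A_2$. It identifies the active pairs as $\Pi_1(x)\times\Pi_2(x)$, which is where $\lambda,1-\lambda\ge0$ enters. It then applies the Clarke--Danskin formula to $-\Phi_\lambda$ to get the explicit description \eqref{eq:Clarke_MoE}, and matches this with \eqref{eq:clarke_di_formula}. You instead deduce Clarke regularity of $-d_i^2$ from semiconcavity and use the equality case of the sum rule for nonnegative combinations of regular functions.

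Your argument is shorter, handles the endpoints $\lambda\in\{0,1\}$ uniformly, and needs no projection formula. It also shows that the MoE equality is purely a consequence of semiconcavity, so it holds for any nonnegative combination of semiconcave functions. The paper's route has the advantage of producing \eqref{eq:Clarke_MoE} directly, and this formula is reused in the Lyapunov identity in the proof of Theorem~\ref{thm:autonomous_convergence_main}. With your approach you would recover it from \eqref{eq:clarke_di_formula} via $\operatorname{conv}(S+T)=\operatorname{conv}S+\operatorname{conv}T$.

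Finally, the step you single out as the main obstacle can be bypassed. The exact sum rule only needs strict differentiability at $x$ itself. For a locally Lipschitz function, a singleton Clarke subdifferential at $x$ is equivalent to strict differentiability at $x$ \cite[Prop.~2.2.4]{clarke1989optimization}. So $x\notin\mathrm{ND}(d_i^2)$ already suffices, and you do not need local $C^1$ smoothness or the nearest-point characterization. In particular, equality off $\mathrm{ND}(d_1^2)\cap\mathrm{ND}(d_2^2)$ holds for arbitrary compact $A_1,A_2$. The finiteness hypothesis, used both in your plan and in the paper, can therefore be dropped.
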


\begin{proof}
The proof is given in Section~\ref{subsec:clarke_structure_autonomous}.
\end{proof}

\begin{rem}
Lemma~\ref{lem:clarke_outer_distinction} highlights the structural difference
between the two regimes. In the MoE case, the limiting dynamics is governed
everywhere by the genuine Clarke subdifferential. In the CFG case, the natural
limiting object is the larger outer Clarke subdifferential.
\end{rem}

\subsection{Time-shift limits}\label{subsec:timeshift_empirical}

We now specialize to the case where the initial measures are finite Dirac mixtures:
\begin{align*}
A_1&=\{x_1,\dots,x_{n_1}\},\qquad A_2=\{y_1,\dots,y_{n_2}\},\\
u_{0,1}&=\sum_{k=1}^{n_1} w_{1,k}\,\delta_{x_k},
\qquad
u_{0,2}=\sum_{\ell=1}^{n_2} w_{2,\ell}\,\delta_{y_\ell},
\end{align*}
with \(w_{1,k}>0\), \(\sum_k w_{1,k}=1\), and \(w_{2,\ell}>0\), \(\sum_\ell w_{2,\ell}=1\).

Our first main statement identifies the asymptotic autonomous limit of the non-autonomous rescaled dynamics \(Y\).

\begin{thm}[Time-shift limit in the empirical case]\label{thm:timeshift_empirical}
Assume that \(u_{0,1}\) and \(u_{0,2}\) are finite Dirac mixtures. Fix \(T>0\). For every initial datum \(x_T\in\R^d\), let \(X=(X_t)_{t\in(0,T]}\) be the solution of~\eqref{eq:generation_ode} with \(X_T=x_T\), and let \(Y=(Y_\tau)_{\tau\geq 0}\) be the rescaled trajectory defined by~\eqref{eq:Y}.

For every sequence \(\tau_j\to\infty\), the family of time-shifted trajectories
\[
Y^j_\tau\coloneqq Y_{\tau+\tau_j},\qquad \tau\ge0,
\]
is relatively compact in \(C_{\mathrm{loc}}(\R_+;\R^d)\). Moreover, every subsequential limit
\[
Y^j\to Z
\qquad\text{in }C_{\mathrm{loc}}(\R_+;\R^d)
\]
is a global Carath\'eodory solution (see Definition~\ref{def:Caratheodory_main}) of an autonomous limiting differential inclusion as follows:
\begin{enumerate}
\item \textbf{MoE regime \(\boldsymbol{0\le \lambda\le 1}\).}
The limit curve \(Z\) satisfies the exact Clarke gradient inclusion
\begin{equation}\label{eq:autonomous_moe_clarke_main}
\dot Z_\tau\in -\frac14\,\partial^C\Phi_\lambda(Z_\tau)
\qquad\text{for a.e. }\tau\ge0.
\end{equation}

\item \textbf{CFG regime \(\boldsymbol{\lambda>1}\).}
The limit curve \(Z\) satisfies the outer Clarke gradient inclusion
\begin{equation}\label{eq:autonomous_cfg_outer_main}
\dot Z_\tau\in -\frac14\,\widehat{\partial}\,\Phi_\lambda(Z_\tau)
\qquad\text{for a.e. }\tau\ge0.
\end{equation}
\end{enumerate}
\end{thm}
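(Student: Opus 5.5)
The plan is to make everything explicit in the empirical case. Since $u_i(x,t)=(4\pi t)^{-d/2}\sum_k w_{i,k}e^{-|x-x_k|^2/(4t)}$, we have
\[
-4t\log u_i(x,t)=2dt\log(4\pi t)-4t\log\sum_k w_{i,k}e^{-|x-x_k|^2/(4t)}.
\]
The spatial gradient of the first term vanishes, so
\[
\nabla F_\lambda(x,t)=\lambda\,g_1(x,t)+(1-\lambda)\,g_2(x,t),
\qquad
g_1(x,t)=\sum_k \pi_{1,k}(x,t)\,2(x-x_k),
\]
where the $\pi_{1,k}$ are the softmax (Gibbs) weights proportional to $w_{1,k}e^{-|x-x_k|^2/(4t)}$; $g_2$ is defined in the same way with the points $y_\ell$. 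Two facts follow directly. First, $g_i(x,t)\in 2(x-\operatorname{conv}A_i)$. Second, as $t\to0^+$, uniformly on compacts away from the Voronoi interfaces, the weights concentrate on the nearest points, so $g_i\to\nabla d_i^2$. More generally, for any $x_n\to x$ and $t_n\to0$, every limit point of $g_i(x_n,t_n)$ lies in $\operatorname{conv}\{2(x-a):a\in A_i,\ |x-a|=d_i(x)\}$. This set equals $\partial^C(d_i^2)(x)$ for finite $A_i$, because $d_i^2=\min_a|x-a|^2$ is a min of smooth functions. The concentration is a one-line Laplace estimate: a non-nearest point $a$ has weight at most $Ce^{-\delta/(4t)}$ relative to a nearest one, by continuity of $|x-a|^2-d_i(x)^2$ near $x$.

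\emph{Step 1: compactness.} The drift bound gives
\[
|\dot Y_\tau|\le \tfrac14\bigl(\lambda|g_1|+|1-\lambda||g_2|\bigr)\le C(1+|Y_\tau|),
\]
so we also need a uniform bound on $Y$. Test with $|Y|^2$: $\frac{d}{d\tau}\tfrac12|Y|^2=-\tfrac14 Y\cdot\nabla F_\lambda$. In the MoE regime $Y\cdot g_i\ge 2|Y|^2-2R|Y|$, where $R$ bounds $A_1\cup A_2$, so $|Y|$ stays bounded. In the CFG regime the two $|Y|^2$ terms combine with coefficient $\lambda+(1-\lambda)=1$ and the cross terms are $O(|Y|)$, so $\frac{d}{d\tau}\tfrac12|Y|^2\le -\tfrac12|Y|^2+C|Y|$ and again $\sup_\tau|Y_\tau|<\infty$. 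Thus the shifted family $Y^j$ is uniformly bounded and uniformly Lipschitz, and Arzelà--Ascoli gives relative compactness in $C_{\mathrm{loc}}(\R_+;\R^d)$. Along a subsequence $\dot Y^j\rightharpoonup\dot Z$ weakly-$*$ in $L^\infty_{\mathrm{loc}}$, and $Z$ is Lipschitz.

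\emph{Step 2: identifying the inclusion.} Set $t_j(\tau)=Te^{-\tau-\tau_j}\to0$ uniformly on compacts. Define the set-valued map
\[
G(x)=-\tfrac14\bigl(\lambda\,\partial^C(d_1^2)(x)+(1-\lambda)\,\partial^C(d_2^2)(x)\bigr)=-\tfrac14\widehat\partial\Phi_\lambda(x).
\]
Each $\partial^C(d_i^2)$ is upper semicontinuous with compact convex values, and a Minkowski sum of such maps with real coefficients (sign irrelevant) keeps these properties. By the concentration fact, for every $\epsilon>0$ and compact $[0,S]$ we have $\dot Y^j_\tau\in G(Z_\tau)+B_\epsilon$ for $j$ large and all $\tau\le S$. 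The conclusion uses $Y^j_\tau\to Z_\tau$ uniformly, the upper semicontinuity, and the fact that $g_1,g_2$ are treated separately. The standard convergence theorem for differential inclusions (Aubin--Cellina: weak limits of velocities lying in shrinking neighborhoods of a convex, closed-valued usc map) then yields $\dot Z_\tau\in G(Z_\tau)$ a.e. This is exactly \eqref{eq:autonomous_cfg_outer_main}. For $0\le\lambda\le1$, Lemma~\ref{lem:clarke_outer_distinction} identifies $\widehat\partial\Phi_\lambda=\partial^C\Phi_\lambda$, which gives \eqref{eq:autonomous_moe_clarke_main}. Global existence on $\R_+$ is automatic, since the limit is defined for all $\tau\ge0$.

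\emph{Main obstacle.} The delicate point is the uniform concentration estimate near interfaces. Near a point $x$ on a Voronoi interface, the pair $(x_n,t_n)$ can approach in any manner, and the weights $\pi_{i,k}$ need not converge. We avoid needing convergence by proving only that the limit points of $g_i$ lie in the convex hull over nearest points, which is all the upper-semicontinuity argument requires. In the CFG regime the negative coefficient prevents any combined Clarke description. Precisely for this reason $g_1$ and $g_2$ are handled separately, which is what produces the outer Clarke set rather than $\partial^C\Phi_\lambda$.
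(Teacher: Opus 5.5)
Your argument is correct and is essentially the paper's proof. The shared steps are: confinement of $Y$ by a quadratic Lyapunov function; Arzel\`a--Ascoli; weak-$*$ limits of the drifts; Laplace concentration of the Gibbs weights on nearest points (the paper's Lemma~\ref{lem:Varadhan-grad}, proved there under the small-ball mass assumption rather than by your finite-sum estimate); and a closure lemma for compact-convex-valued usc maps. For the confinement you use $|Y|^2$, whereas the paper uses $\tfrac12\operatorname{dist}(\cdot,K)^2$ with $K=\operatorname{conv}(\lambda A_1+(1-\lambda)A_2)$, which treats both regimes at once. For the closure step the paper proves its Lemma~\ref{lem:weakstar_closed_convex_empirical} by hand via support functions, instead of quoting Aubin--Cellina.

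One correction is needed. Your claim that $\dot Y^j_\tau\in G(Z_\tau)+B_\epsilon$ for all $\tau\le S$ once $j$ is large does not follow from upper semicontinuity. The admissible radius depends on $Z_\tau$ and degenerates as $Z_\tau$ approaches an interface, so when $Z$ crosses an interface, $Y^j_\tau$ and $Z_\tau$ can sit on opposite sides and differ by the $O(1)$ jump of the field. You only need the pointwise statement $\operatorname{dist}(\dot Y^j_\tau,G(Z_\tau))\to0$ for each fixed $\tau$. Your concentration fact gives this directly, and it is exactly what the Aubin--Cellina convergence theorem (and the paper's Lemma~\ref{lem:weakstar_closed_convex_empirical}) requires.
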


\begin{proof}
The proof of Theorem~\ref{thm:timeshift_empirical} is deferred to Section~\ref{sec:proof_time_shift}. It relies on the Laplace--Varadhan asymptotics for \(F_\lambda\), on the mean-shift representation of the score, and on a compactness argument for time shifts in similarity time.
\end{proof}

\begin{rem}[Existence of solutions to the limiting inclusion]
\label{rem:existence_limiting_inclusion}
Since \(A_1\) and \(A_2\) are compact, both multifunctions
\[
x\longmapsto -\frac14\,\partial^C\Phi_\lambda(x)
\qquad\text{and}\qquad
x\longmapsto -\frac14\,\widehat{\partial}\,\Phi_\lambda(x)
\]
are upper semicontinuous, with nonempty, convex, compact values and at most linear growth. Hence, by the Viability Theorem \cite[Thm.~10.1.6]{aubin1990set}, each associated differential inclusion admits at least one global Carath\'eodory solution from every initial point. A more rigorous version is found in Lemma~\ref{lem:existence_caratheodory}.
\end{rem}

\begin{rem}[Convergence implies criticality]
\label{rem:convergence_implies_criticality}
If the original trajectory \(X_t\) converges as \(t\to0^+\) to some point \(x^\ast\), then every time-shift limit is necessarily the constant curve \(Z_\tau\equiv x^\ast\). Therefore, by Theorem~\ref{thm:timeshift_empirical},
\[
0\in \partial^C\Phi_\lambda(x^\ast)\qquad\text{if }0\le \lambda\le 1,
\]
whereas
\[
0\in \widehat{\partial}\,\Phi_\lambda(x^\ast)\qquad\text{if }\lambda>1.
\]
In particular, any convergent generation trajectory must converge to a critical point of the corresponding limiting nonsmooth dynamics.

Although the convergence assumption is strong, it is consistent with the numerical behavior observed in Figures~\ref{fig:V_vs_Phi_and_det_sto_flows}, \ref{fig:generation_2D_combined}, and~\ref{fig:continuous-data}. 
\end{rem}

\begin{rem}[Piecewise-affine structure of the limiting field]
\label{rem:plot_smooth_limiting_field}
Figure~\ref{fig:smooth_field_minimizers} shows the vector field appearing
in~\eqref{eq:autonomous_moe_clarke_main} and
\eqref{eq:autonomous_cfg_outer_main} outside the interface set
\(\mathrm{ND}(A_1,A_2)\). The black stars mark the local minimizers of
\(\Phi_\lambda\).

On this smooth region, the nearest points of \(x\) in \(A_1\) and \(A_2\) are
uniquely determined; we denote them by \(a_1\) and \(a_2\), respectively. The
limiting inclusion then reduces to the classical gradient-flow field
\[
-\frac14 \nabla \Phi_\lambda(x)
=
-\frac12\bigl(x-\lambda a_1-(1-\lambda)a_2\bigr).
\]
Since \(a_1\) and \(a_2\) remain constant on each smooth cell, this vector field
is affine on each cell. Consequently, the limiting dynamics is piecewise affine
on \(\mathbb{R}^d\setminus \mathrm{ND}(A_1,A_2)\).

Figure~\ref{fig:interface_smooth_field} complements this picture near the interface set by plotting the same smooth field on the two sides of \(\mathrm{ND}(A_1,A_2)\). In the MoE regime \(0\le \lambda\le 1\), the behavior is relatively simple: on the two sides of an interface, the vector fields either point away from the interface in opposite directions, or cross it with similar directions. In particular, no sliding phenomenon is expected. By contrast, in the CFG regime \(\lambda>1\), some interfaces have vector fields on both sides pointing toward the interface. This creates a sliding phenomenon, which makes the analysis more delicate; see \cite[Fig.~8]{cortes2008discontinuous}.

Finally, Figure~\ref{fig:Phi_2D} displays the three-dimensional landscape of
the geometric potential \(\Phi_\lambda\) for the same values of \(\lambda\).
The MoE landscapes retain the semiconcave squared-distance structure, whereas
the CFG landscape exhibits stronger extrapolative and interface effects. This
helps visualize the qualitative difference between the genuine Clarke dynamics
in the MoE regime and the outer-Clarke dynamics in the CFG regime.
\end{rem}

\begin{figure}[p]
    \centering

    \begin{subfigure}[t]{0.98\textwidth}
        \centering
        \includegraphics[width=\textwidth]{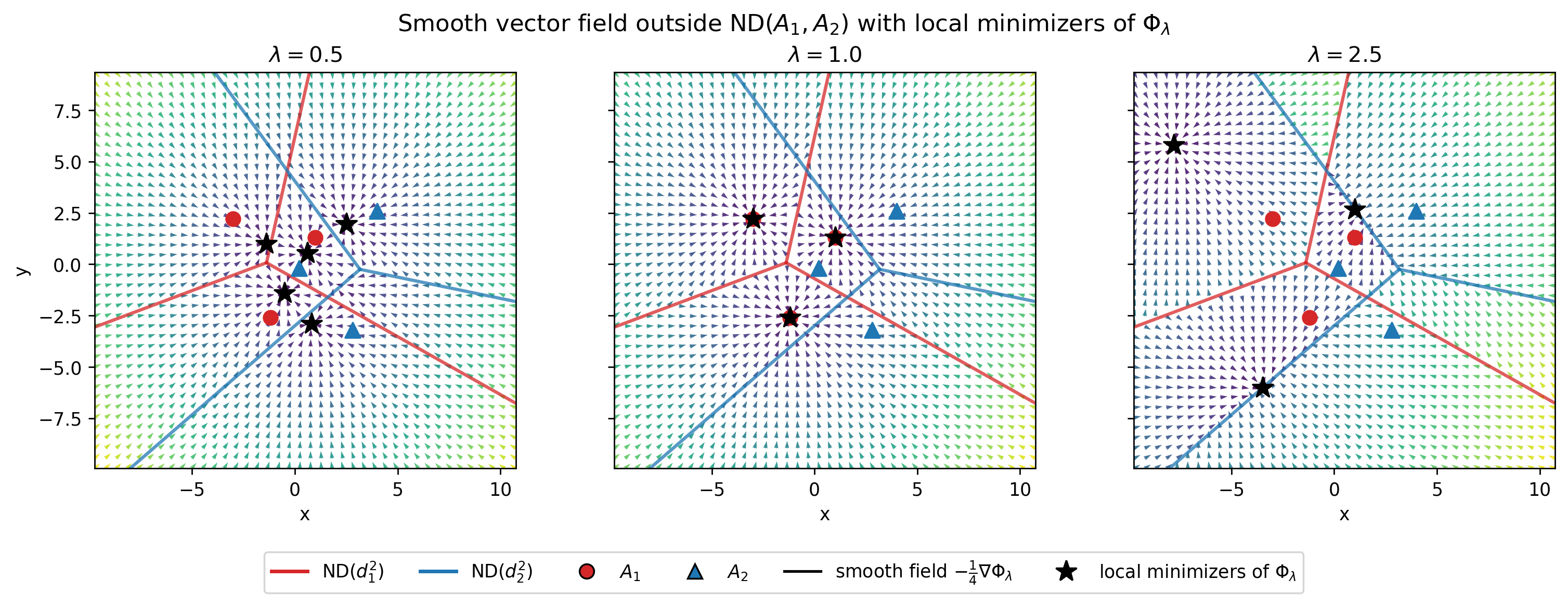}
        \caption{Smooth vector field \(-\frac14 \nabla \Phi_\lambda\) outside the interface set \(\mathrm{ND}(A_1,A_2)\). The black stars mark the local minimizers of \(\Phi_\lambda\).}
        \label{fig:smooth_field_minimizers}
    \end{subfigure}

    \vspace{1.8em}

    \begin{subfigure}[t]{0.98\textwidth}
        \centering
        \includegraphics[width=\textwidth]{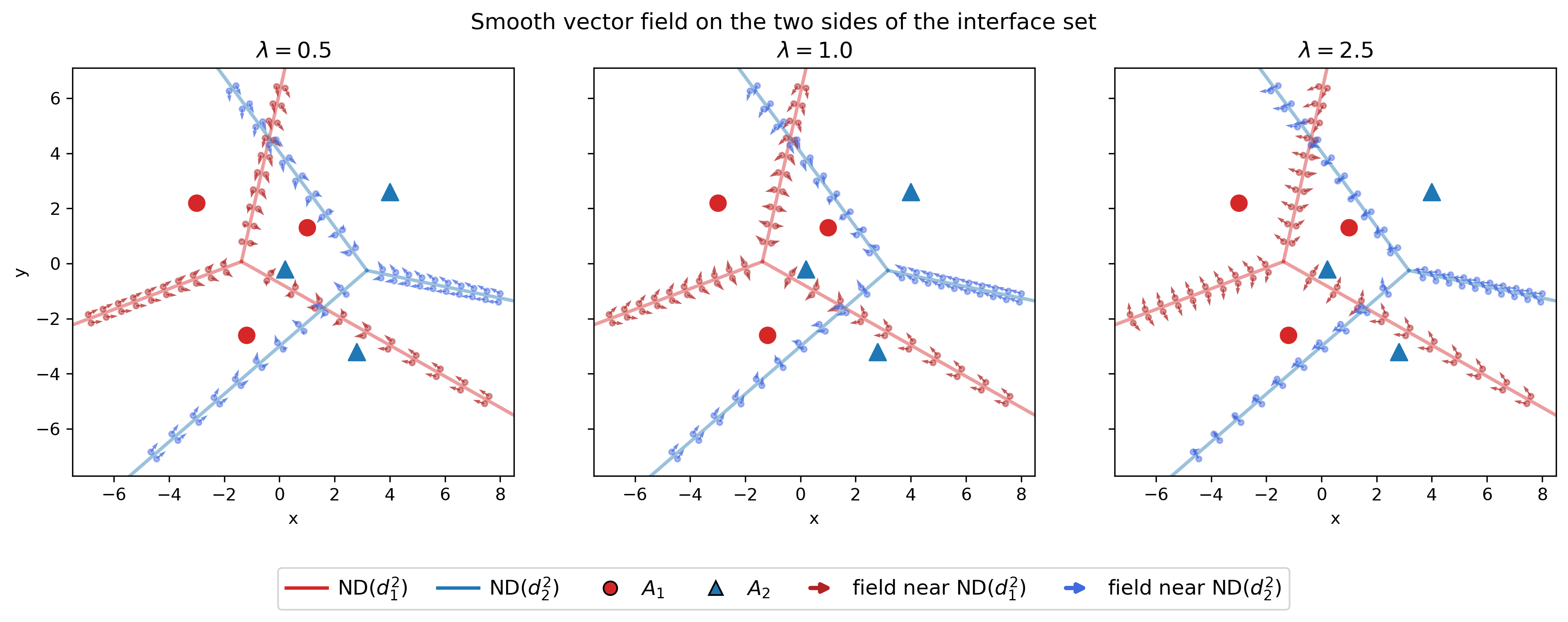}
        \caption{Smooth vector field \(-\frac14 \nabla \Phi_\lambda\) plotted on the two sides of the interface set \(\mathrm{ND}(A_1,A_2)\). }
        \label{fig:interface_smooth_field}
    \end{subfigure}

    \vspace{3em}

    \begin{subfigure}[t]{0.98\textwidth}
        \centering
        \begin{minipage}[t]{0.31\textwidth}
            \centering
            \includegraphics[width=\textwidth]{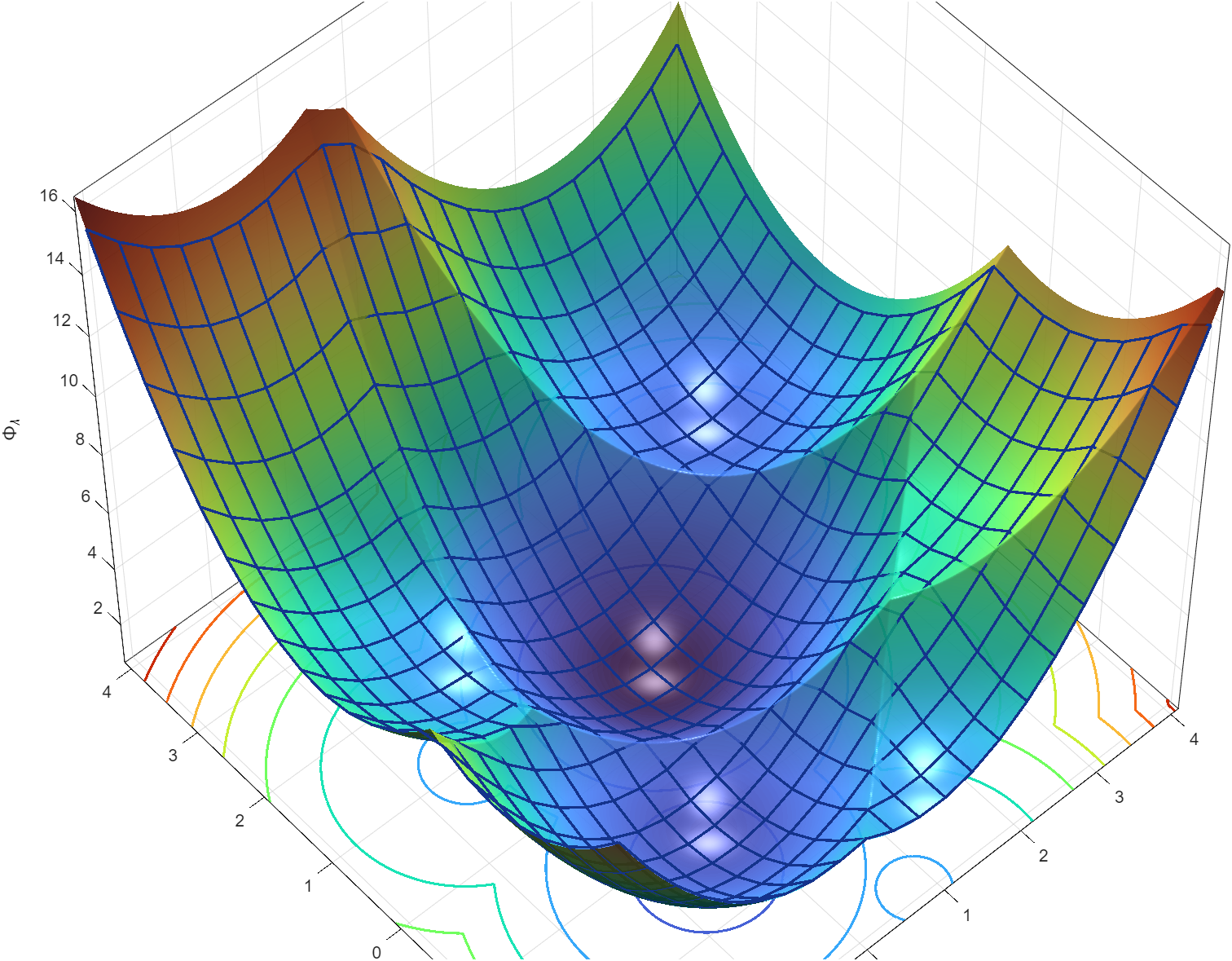}
            
            \small \(\lambda=0.5\)
        \end{minipage}\hfill
        \begin{minipage}[t]{0.31\textwidth}
            \centering
            \includegraphics[width=\textwidth]{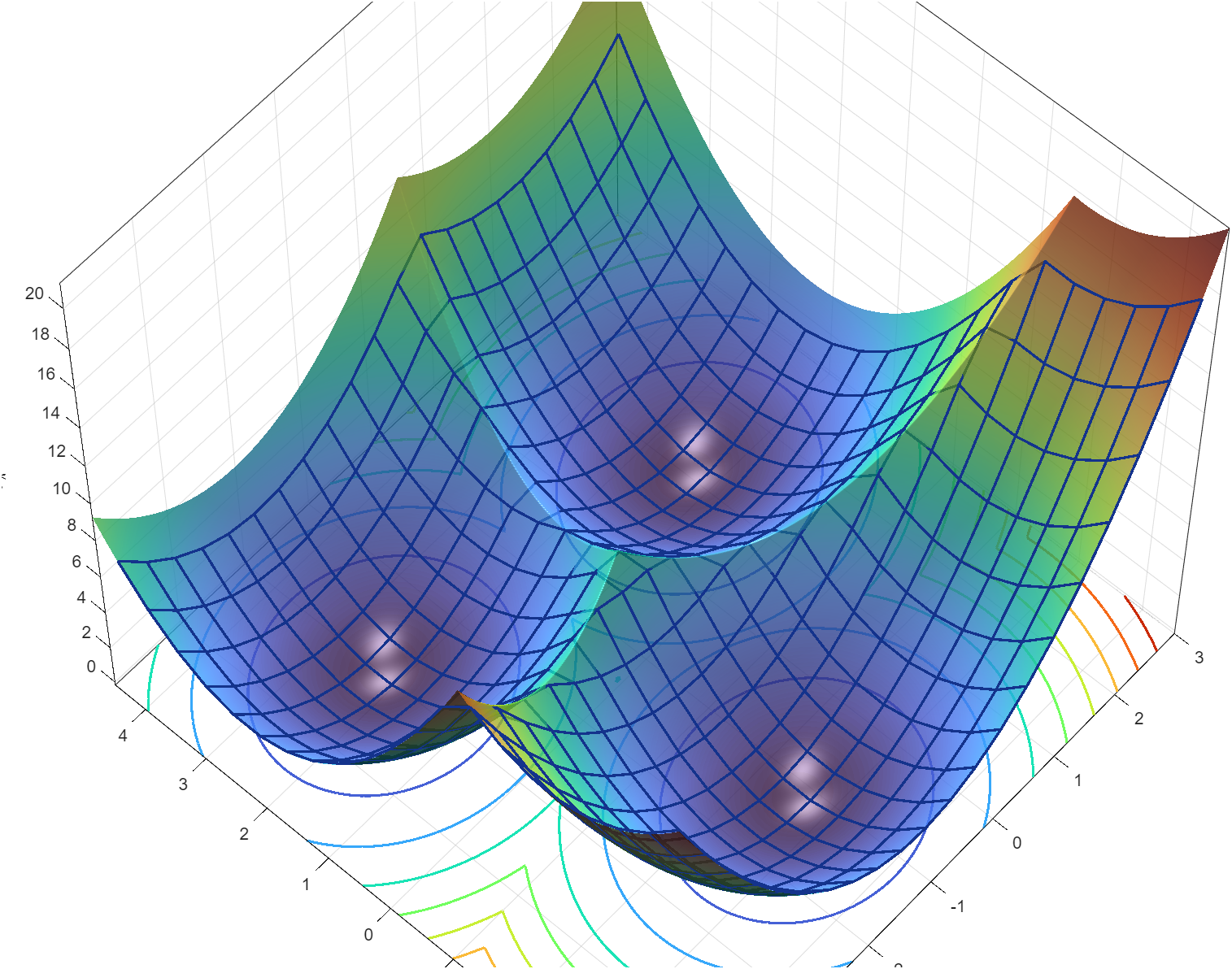}
            
            \small \(\lambda=1\)
        \end{minipage}\hfill
        \begin{minipage}[t]{0.31\textwidth}
            \centering
            \includegraphics[width=\textwidth]{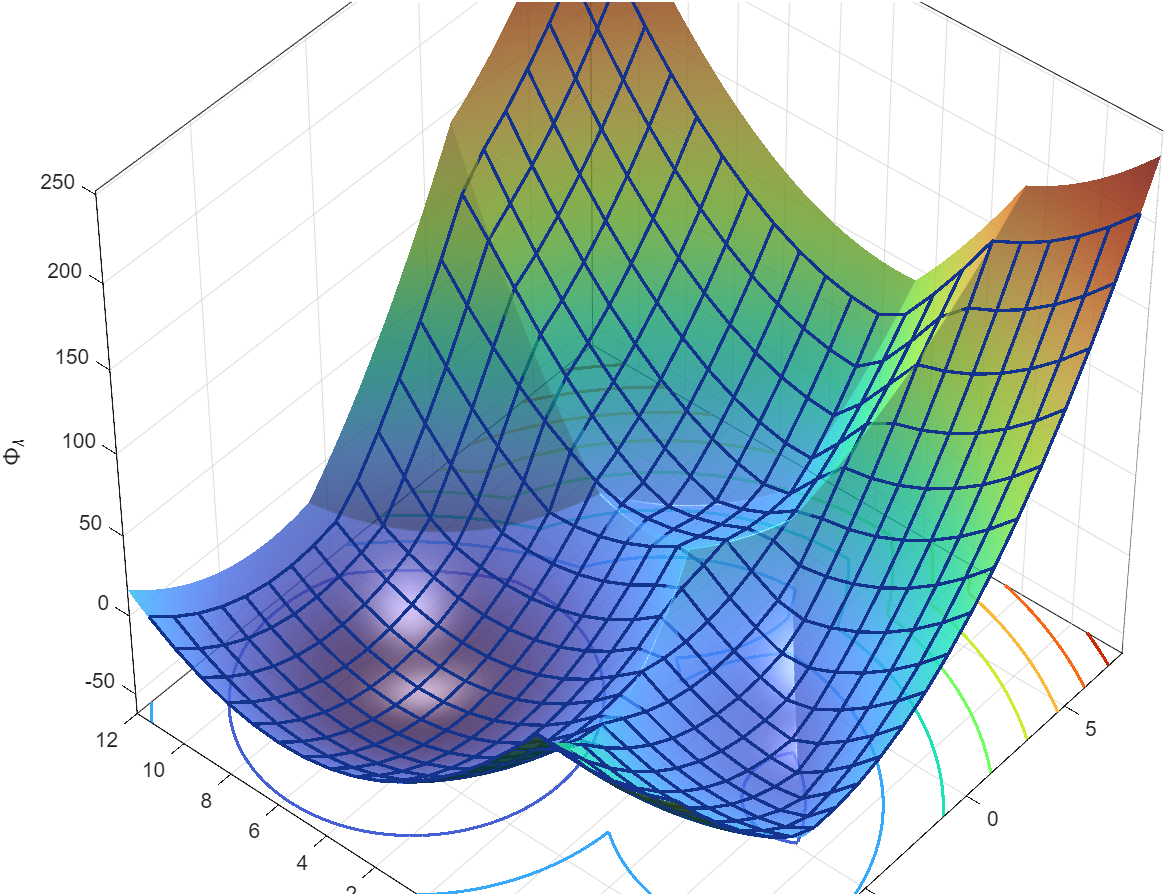}
            
            \small \(\lambda=2.5\)
        \end{minipage}
        \caption{3D visualization of the geometric distance potential \(\Phi_\lambda\) for three values of \(\lambda\).}
        \label{fig:Phi_2D}
    \end{subfigure}

    \caption{Vector fields of the limiting dynamics and the associated potential landscape.}
    \label{fig:limiting_dynamics_all}
\end{figure}

\begin{rem}[Time-shift limits beyond the empirical setting]
\label{rem:timeshift_general_case}
Theorem~\ref{thm:timeshift_empirical} shows that every time-shift limit of the
rescaled flow \(Y\) satisfies the corresponding autonomous limiting inclusion:
\eqref{eq:autonomous_moe_clarke_main} in the MoE regime and
\eqref{eq:autonomous_cfg_outer_main} in the CFG regime. Passing to time-shift
limits is a classical tool in the asymptotic analysis of non-autonomous systems;
see~\cite[Thm.~3]{galaktionov1991asymptotic} and
\cite{galaktionov2004stability}.

Although the theorem is stated in the empirical setting, its proof only uses
the quantitative Laplace--Varadhan estimates derived under
Assumption~\ref{ass:uniform_small_ball_mass}, introduced in
Section~\ref{sec:laplace_varadhan_structure}. Consequently, the same
time-shift conclusion remains valid for compactly supported probability
measures satisfying this uniform lower small-ball mass condition.
This class includes, in particular, finite Dirac mixtures \((\alpha=0)\);
absolutely continuous measures on full-dimensional compact supports whose
densities are bounded from below by a positive constant on their supports
\((\alpha=d)\); and measures supported on lower-dimensional manifolds, provided
they have a uniformly positive density with respect to the corresponding
intrinsic volume measure. 
\end{rem}

\subsection{Convergence of the autonomous limiting system}
\label{subsec:autonomous_convergence_main}

Theorem~\ref{thm:timeshift_empirical} shows that every time-shift limit of the rescaled trajectory \(Y\) solves an autonomous limiting differential inclusion driven by the geometric potential \(\Phi_\lambda\). We now describe the qualitative dynamics of this limiting system in the empirical setting.

We begin by recalling the notion of solution for the differential inclusions.

\begin{defn}[Global Carath\'eodory solution]
\label{def:Caratheodory_main}
Let \(\mathcal F:\R^d\rightrightarrows\R^d\) be a set-valued map and let \(z_0\in\R^d\). A curve
\[
Z:[0,\infty)\to\R^d
\]
is called a global \emph{Carath\'eodory solution} of
\[
\dot Z_\tau\in \mathcal F(Z_\tau)
\qquad\text{for a.e. }\tau\ge0,\qquad Z_0=z_0,
\]
if \(Z\) is absolutely continuous and there exists a measurable selection
\[
\xi(\tau)\in \mathcal F(Z_\tau)
\qquad\text{for a.e. }\tau\ge0
\]
such that
\[
Z_\tau=z_0+\int_0^\tau \xi(s)\,ds
\qquad \forall \tau\ge0.
\]
\end{defn}

In the empirical case, the geometric potential \(\Phi_\lambda\) is piecewise quadratic, and this rigid structure is strong enough to force every autonomous limiting trajectory to converge to a single critical point.

\begin{thm}[Convergence of the autonomous limiting system in the empirical setting]
\label{thm:autonomous_convergence_main}
Assume that \(u_{0,1}\) and \(u_{0,2}\) are finite Dirac mixtures, and fix \(z_0\in\R^d\). Then there exists at least one global Carath\'eodory solution starting from \(z_0\) of the corresponding autonomous limiting inclusion, namely \eqref{eq:autonomous_moe_clarke_main} in the MoE regime \(0\le \lambda\le 1\), and \eqref{eq:autonomous_cfg_outer_main} in the CFG regime \(\lambda>1\).

Moreover, every such global solution \(Z\) satisfies the following properties:
\begin{enumerate}
    \item \textbf{Lyapunov monotonicity.}
    For every \(0\le a < b<\infty\),
    \[
    \Phi_\lambda(Z_b)-\Phi_\lambda(Z_a)
    =
    -4\int_a^b \|\dot Z_\tau\|^2\,d\tau.
    \]
    In particular, the map \(\tau\mapsto \Phi_\lambda(Z_\tau)\) is nonincreasing on \([0,\infty)\).

    \item \textbf{Convergence.}
    There exists \(x^\ast\in\R^d\) such that
    \[
    Z_\tau\to x^\ast
    \qquad\text{as }\tau\to\infty.
    \]
    Moreover, with the notation \(\mathrm{Crit}(\Phi_\lambda)\) and \(\mathrm{Crit}_{\mathrm{out}}(\Phi_\lambda)\) introduced in~\eqref{eq:crit_def_main},
\begin{align*}
 &x^\ast\in \mathrm{Crit}(\Phi_\lambda), \quad &\textnormal{if } 0\leq \lambda\leq 1; \\
    & x^\ast\in \mathrm{Crit}_{\mathrm{out}}(\Phi_\lambda), \quad & \textnormal{if }  \lambda > 1.
\end{align*}
\end{enumerate}
\end{thm}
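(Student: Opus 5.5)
The plan has three parts: existence from the earlier remark, a chain rule that gives the Lyapunov identity, and a Kurdyka--\L{}ojasiewicz (KL) finite-length argument for convergence. The piecewise quadratic structure of \(\Phi_\lambda\) in the empirical case drives the second and third parts.

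\textbf{Existence.} This follows from Remark~\ref{rem:existence_limiting_inclusion} (Lemma~\ref{lem:existence_caratheodory}). The maps \(-\frac14\partial^C\Phi_\lambda\) and \(-\frac14\widehat\partial\Phi_\lambda\) are upper semicontinuous, with nonempty convex compact values and linear growth.

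\textbf{Lyapunov identity.} I would first write \(d_i^2(x)=\|x\|^2-g_i(x)\), where \(g_1(x)=\max_k\bigl(2x\cdot x_k-\|x_k\|^2\bigr)\) and \(g_2\) is defined in the same way with the \(y_\ell\). Each \(g_i\) is convex and piecewise affine, and \(\partial^C(d_i^2)(x)=2x-\partial g_i(x)\). For a convex function \(g\) and an absolutely continuous curve \(Z\), a Br\'ezis-type chain rule holds: for a.e.\ \(\tau\), \(\frac{d}{d\tau}g(Z_\tau)=\langle p,\dot Z_\tau\rangle\) for \emph{every} \(p\in\partial g(Z_\tau)\). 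The reason is that \(\tau\mapsto g(Z_\tau)\) is absolutely continuous, and at any point where both it and \(Z\) are differentiable, the subgradient inequality applied from both sides forces equality. Consequently, for a.e.\ \(\tau\),
\[
\tfrac{d}{d\tau}d_i^2(Z_\tau)=\langle q_i,\dot Z_\tau\rangle \quad\text{for every } q_i\in\partial^C(d_i^2)(Z_\tau),\ i=1,2.
\]
This holds for each \(d_i^2\) separately, and linear combinations of the two identities are allowed with any sign of \(1-\lambda\). So for every \(\xi\in\widehat\partial\Phi_\lambda(Z_\tau)\) we get \(\frac{d}{d\tau}\Phi_\lambda(Z_\tau)=\langle\xi,\dot Z_\tau\rangle\). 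Choosing \(\xi=-4\dot Z_\tau\) (the selection) gives \(-4\|\dot Z_\tau\|^2\); integrating from \(a\) to \(b\) gives item~1. In the MoE regime this covers \(\partial^C\Phi_\lambda\) as well, by Lemma~\ref{lem:clarke_outer_distinction}.

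\textbf{Boundedness and criticality of limit points.} Since \(\lambda+(1-\lambda)=1\) and \(d_i(x)=\|x\|+O(1)\), we have \(\Phi_\lambda(x)=\|x\|^2+O(\|x\|+1)\). Hence \(\Phi_\lambda\) is coercive and bounded below, and the Lyapunov identity gives that \(Z\) is bounded, \(\int_0^\infty\|\dot Z\|^2<\infty\), and \(\Phi_\lambda(Z_\tau)\downarrow\Phi^\ast\). Take a limit point \(z\) and apply time shifts \(Z_{\cdot+\tau_n}\). Arzel\`a--Ascoli, together with the closed-graph property of the (outer) Clarke map and the vanishing \(L^2\) energy, shows that the shifted limit is the constant curve \(z\) and that \(0\in\widehat\partial\Phi_\lambda(z)\). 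In the MoE regime the same argument gives \(0\in\partial^C\Phi_\lambda(z)\). Thus \(\omega(Z)\) is a compact connected subset of the (outer) critical set on which \(\Phi_\lambda\equiv\Phi^\ast\).

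\textbf{Single-point convergence (main obstacle).} I would prove a \L{}ojasiewicz/KL inequality adapted to the outer subdifferential: near \(\omega(Z)\),
\[
\operatorname{dist}\bigl(0,\widehat\partial\Phi_\lambda(x)\bigr)\ \ge\ c\,|\Phi_\lambda(x)-\Phi^\ast|^{\theta},\qquad \theta\in[\tfrac12,1).
\]
The first approach I would try uses only the empirical structure. The space splits into finitely many polyhedral cells, indexed by pairs of active index sets \((I_1(x),I_2(x))\). On each such stratum, \(\widehat\partial\Phi_\lambda(x)\) is the explicit polytope \(2x-2\lambda\operatorname{conv}\{x_k\}_{I_1}-2(1-\lambda)\operatorname{conv}\{y_\ell\}_{I_2}\), and \(\Phi_\lambda\) is one fixed quadratic. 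The distance from \(0\) to this polytope is then a semialgebraic function of \(x\). The inequality would follow either from the semialgebraic \L{}ojasiewicz inequality via curve selection, or, more concretely, from the observation that both sides are quadratic/affine in \(x\) on each of finitely many strata, which should give \(\theta=\tfrac12\). The delicate point is the CFG regime: \(\widehat\partial\) is larger than \(\partial^C\), so the standard Clarke-KL theorem cannot be quoted directly and the stratum-wise computation must be done by hand, including strata where \(\Phi_\lambda\) is constant along critical affine pieces.

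Granting the inequality, I would conclude with the usual argument. For large \(\tau\),
\[
\frac{d}{d\tau}(\Phi_\lambda(Z_\tau)-\Phi^\ast)^{1-\theta}\le -c'\|\dot Z_\tau\|.
\]
Hence \(\int^\infty\|\dot Z\|<\infty\), so \(Z_\tau\to x^\ast\), and \(x^\ast\) is critical by the preceding step.
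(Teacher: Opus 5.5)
Your existence, Lyapunov, coercivity and criticality steps are sound. The Lyapunov identity via $d_i^2=\|x\|^2-g_i$ with $g_i$ convex, combined with the two-sided subgradient inequality, is cleaner and more general than the paper's argument. The paper works stratum by stratum: density points of $E_{I,J}$, tangency of $\dot Z_\tau$ to $V_{I,J}$, and projection of the active gradients. Your argument works verbatim for arbitrary compact supports.

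The gap is single-point convergence. You never establish the KL inequality (you write ``granting the inequality''), and in your scheme that inequality carries the entire convergence claim. The obstacle you anticipate also does not exist. Every branch $Q^\lambda_{k\ell}(x)=\lambda\|x-x_k\|^2+(1-\lambda)\|x-y_\ell\|^2$ has Hessian $2I$ for every $\lambda\ge0$, because the coefficients sum to $1$. So on each stratum $\Phi_\lambda$ is strongly convex along the affine hull and cannot be constant on any segment, even in the CFG regime.

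The missing idea is that this rigidity makes $\mathrm{Crit}_{\mathrm{out}}(\Phi_\lambda)$ finite, after which no KL inequality is needed.
\begin{itemize}
\item On a stratum $\Sigma_{I,J}$, the differences $Q^\lambda_{k\ell}-Q^\lambda_{k'\ell'}$ with $(k,\ell),(k',\ell')\in I\times J$ are affine and vanish on $\Sigma_{I,J}$, hence on its affine hull $M_{I,J}$.
\item Therefore all active gradients have the same projection onto the direction space $V_{I,J}$, namely $\nabla_{M}q_{I,J}(x)=2(x-c_{I,J})$. Here $q_{I,J}$ is the common strongly convex restriction and $c_{I,J}$ is its minimizer on $M_{I,J}$.
\item For $x\in\Sigma_{I,J}$ one has $\widehat\partial\Phi_\lambda(x)=\operatorname{conv}\{\nabla Q^\lambda_{k\ell}(x):(k,\ell)\in I\times J\}$. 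Projecting $0\in\widehat\partial\Phi_\lambda(x)$ onto $V_{I,J}$ then forces $x=c_{I,J}$.
\item Hence each of the finitely many strata contains at most one outer critical point.
\end{itemize}
You already showed that $\omega(Z)$ is compact, connected and contained in $\mathrm{Crit}_{\mathrm{out}}(\Phi_\lambda)$, so it is a singleton. This is exactly how the paper concludes, and with this lemma inserted your proof is complete.

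If you still prefer the KL route, the same projection gives the inequality by hand with $\theta=\tfrac12$. On $\Sigma_{I,J}$ one has $\operatorname{dist}(0,\widehat\partial\Phi_\lambda(x))\ge 2\|x-c_{I,J}\|$ and $\Phi_\lambda(x)-q_{I,J}(c_{I,J})=\|x-c_{I,J}\|^2$.
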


\begin{proof}
The proof is deferred to Section~\ref{subsec:proof_autonomous_convergence}. It relies on the piecewise quadratic structure of \(\Phi_\lambda\) in the empirical setting, on the finiteness of the critical set, and on the strict Lyapunov identity along Carath\'eodory solutions.
\end{proof}

\begin{rem}[Role of the autonomous limiting system]
\label{rem:role_autonomous_limiting_system}
Theorem~\ref{thm:autonomous_convergence_main} shows that, in the empirical
setting, every global solution of the autonomous limiting inclusion converges
to a single critical point of the geometric potential \(\Phi_\lambda\). Thus the
critical geometry of the time-independent landscape \(\Phi_\lambda\) determines
the possible asymptotic states of all time-shift limits of the rescaled
generation flow.

This result explains the central role of \(\Phi_\lambda\). It does not, by
itself, imply convergence of the original non-autonomous trajectory, since
different time shifts could in principle select different limiting critical
points. It does, however, reduce the remaining question to a selection problem:
which critical points of \(\Phi_\lambda\) can be reached by the original
rescaled flow?
\end{rem}

\subsection{Convergence criterion and rates in the empirical setting}
\label{subsec:empirical_convergence_main}

We now pass from the autonomous limiting inclusions back to the original
non-autonomous generation flow in the empirical setting. The previous theorem
shows that every time-shift limit of the rescaled trajectory \(Y\) converges to
a critical point of \(\Phi_\lambda\), in the Clarke sense in the MoE regime and
in the outer Clarke sense in the CFG regime. This asymptotic information is not
yet enough to force convergence of \(Y\) itself: a priori, distinct time shifts
could converge to distinct critical points.

The convergence criterion below rules out this possibility under a natural
selection assumption. If the \(\omega\)-limit set of \(Y\) contains no
non-minimizing critical point, then the only possible limiting states are local
minimizers of \(\Phi_\lambda\). The local trap property of such minimizers,
established in Lemma~\ref{lem:local_minimizers_are_traps}, then prevents the
trajectory from moving between different minimizing basins and yields
convergence to a single local minimizer.

We denote by
\[
\mathrm{Min}(\Phi_\lambda)
\coloneqq
\{x\in\mathbb{R}^d:\ x \text{ is a local minimizer of }\Phi_\lambda\}.
\]

\begin{thm}[Convergence criterion in the empirical setting]
\label{thm:empirical_convergence_criterion}
Assume that \(u_{0,1}\) and \(u_{0,2}\) are finite Dirac mixtures, and fix \(T>0\). For every initial datum \(x_T\in\R^d\), let \(X=(X_t)_{t\in(0,T]}\) be the solution of~\eqref{eq:generation_ode} with \(X_T=x_T\), and let \(Y\) be the rescaled trajectory defined by~\eqref{eq:Y}.

Assume that the \(\omega\)-limit set of \(Y\) contains no non-minimizing critical point of the corresponding limiting notion. More precisely:
\begin{itemize}
    \item in the MoE regime \(0\le \lambda\le 1\),
    \[
    \omega_\tau(Y)\cap\bigl(\mathrm{Crit}(\Phi_\lambda)\setminus \mathrm{Min}(\Phi_\lambda)\bigr)=\varnothing;
    \]
    \item in the CFG regime \(\lambda>1\),
    \[
    \omega_\tau(Y)\cap\bigl(\mathrm{Crit}_{\mathrm{out}}(\Phi_\lambda)\setminus \mathrm{Min}(\Phi_\lambda)\bigr)=\varnothing.
    \]
\end{itemize}
Then there exists \(x^\ast\in \mathrm{Min}(\Phi_\lambda)\) such that
\[
Y_\tau\to x^\ast
\qquad\text{as }\tau\to\infty,
\]
equivalently,
\[
X_t\to x^\ast
\qquad\text{as }t\to0^+.
\]
\end{thm}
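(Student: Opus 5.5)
The argument combines three ingredients. The first is the time-shift compactness of Theorem~\ref{thm:timeshift_empirical}. The second is the convergence of autonomous solutions from Theorem~\ref{thm:autonomous_convergence_main}. The third is the local trap property of local minimizers, Lemma~\ref{lem:local_minimizers_are_traps}.

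\textbf{Step 1: boundedness and invariance of $\omega_\tau(Y)$.} In the empirical case the mean-shift representation of the score gives
\[
\nabla F_\lambda(x,t) = 2\bigl(x - \lambda m_1(x,t) - (1-\lambda) m_2(x,t)\bigr),
\]
where $m_i(x,t)$ are convex combinations of points of $A_i$. Hence $\dot Y_\tau = -\tfrac12 Y_\tau + b(\tau)$ with $b$ uniformly bounded. A Gr\"onwall argument then shows that $Y$ is bounded on $[0,\infty)$, so $\omega_\tau(Y)$ is nonempty and compact.

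Next, take $y^\ast\in\omega_\tau(Y)$ with $Y_{\tau_j}\to y^\ast$. By Theorem~\ref{thm:timeshift_empirical}, along a subsequence the shifts $Y^j$ converge in $C_{\mathrm{loc}}$ to a global Carath\'eodory solution $Z$ of the limiting inclusion with $Z_0=y^\ast$. Every point of $Z$ lies in $\omega_\tau(Y)$. By Theorem~\ref{thm:autonomous_convergence_main}, $Z_\tau\to x^\ast$, a critical point (in the Clarke or outer Clarke sense, depending on $\lambda$). Since $\omega_\tau(Y)$ is closed, $x^\ast\in\omega_\tau(Y)$. The hypothesis then forces $x^\ast\in\mathrm{Min}(\Phi_\lambda)$. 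So $\omega_\tau(Y)$ contains at least one local minimizer.

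\textbf{Step 2: trapping.} Fix such an $x^\ast$. The trap lemma, as I expect it is formulated, states the following. For every small $\varepsilon>0$ there exist $\delta\in(0,\varepsilon)$ and $\tau_0$ such that if $\|Y_\sigma - x^\ast\|<\delta$ for some $\sigma\ge\tau_0$, then $\|Y_\tau-x^\ast\|<\varepsilon$ for all $\tau\ge\sigma$.

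If it is only stated for the autonomous inclusion, I would transfer it to the non-autonomous flow. The tool is a sublevel-set argument for $\Phi_\lambda$ near $x^\ast$, which is a strict local minimizer because $\Phi_\lambda$ is piecewise quadratic with finitely many critical points. I would combine this with the uniform estimate $\|F_\lambda(\cdot,t)-\Phi_\lambda\|_{L^\infty(K)} = O(t\log(1/t))$ on compact sets $K$, which follows from Laplace--Varadhan for finite Dirac mixtures. The idea is as follows. Take a sublevel component $\{\Phi_\lambda < \Phi_\lambda(x^\ast)+\eta\}$ around $x^\ast$ lying inside $B_\varepsilon(x^\ast)$. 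Along $Y$, the time derivative of $F_\lambda(Y_\tau,Te^{-\tau})$ equals $-\tfrac14\|\nabla F_\lambda\|^2$ plus an error in $\partial_\tau F_\lambda$ that is exponentially small in $\tau$. The derivative is therefore almost nonpositive, and the trajectory cannot climb the barrier of height $\eta$ once $\tau$ is large.

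\textbf{Step 3: conclusion.} Since $x^\ast\in\omega_\tau(Y)$, the trajectory enters $B_\delta(x^\ast)$ at arbitrarily late times. By Step 2, it then stays in $B_\varepsilon(x^\ast)$ forever. Since $\varepsilon$ is arbitrary, $Y_\tau\to x^\ast$. The statement $X_t\to x^\ast$ follows from \eqref{eq:omega_X_equals_omega_Y}.

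\textbf{Main obstacle.} I expect the hardest part to be Step 2, specifically controlling the non-autonomous drift error in the barrier argument. The issue is that $\partial_t F_\lambda$ near the Voronoi interfaces is not pointwise small uniformly; only the integrated error in similarity time is. I would first try to bound $|\partial_\tau F_\lambda(x,Te^{-\tau})|\lesssim e^{-\tau}\bigl(1+|\log t|\bigr)$ using explicit formulas for the heat kernels of Dirac mixtures. This bound is integrable in $\tau$, so the total gain in $F_\lambda$ along the trajectory after time $\sigma$ tends to zero as $\sigma\to\infty$.

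A further subtlety arises in the CFG regime. There $\Phi_\lambda$ may be nonsmooth at $x^\ast$, so the barrier must be built purely from sublevel sets and must not rely on local convexity.
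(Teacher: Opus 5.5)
Your proof is correct, but it transfers the trap property to the non-autonomous flow by a genuinely different mechanism from the paper's.

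\textbf{The paper's route.} The paper never differentiates $F_\lambda$ in time. It argues by contradiction on $\operatorname{dist}(Y_\tau,\mathrm{Min}(\Phi_\lambda))$. Using time-shift limits, Theorem~\ref{thm:autonomous_convergence_main} and the $\omega$-limit hypothesis, it shows that the excursion intervals $I_j^\varepsilon$ away from $\mathrm{Min}(\Phi_\lambda)$ have uniformly bounded length (Lemma~\ref{lem:uniform_bound_bad_intervals}). It then applies the \emph{autonomous} trap Lemma~\ref{lem:local_minimizers_are_traps} to the shift limit taken at the entrance times $a_j^\varepsilon$; uniform convergence on $[0,C_\varepsilon]$ controls the whole excursion. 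This is a purely qualitative Markus/Galaktionov--V\'azquez argument. It needs only the shift-limit theorem plus properties of the autonomous inclusion, and no estimate on $\partial_t F_\lambda$.

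\textbf{Your route and what it gains.} You instead make $\Phi_\lambda$ an almost-Lyapunov function for $Y$ itself, via $\frac{d}{d\tau}F_\lambda(Y_\tau,Te^{-\tau})=-\frac14\|\nabla F_\lambda\|^2-t\,\partial_tF_\lambda$. This has several advantages:
\begin{enumerate}
\item It is shorter, and uses the hypothesis only once, to place a minimizer in $\omega_\tau(Y)$.
\item It dispenses with both the autonomous trap lemma and the bad-interval lemma.
\item It proves more: $Y$ converges as soon as $\omega_\tau(Y)$ meets $\mathrm{Min}(\Phi_\lambda)$.
\end{enumerate}
Strictness of such minimizers does follow from finiteness of $\mathrm{Crit}_{\mathrm{out}}(\Phi_\lambda)$, since nearby points of equal value would themselves be local minimizers.

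\textbf{Your ``main obstacle'' is not one.} The time-derivative bound you flag is immediate in the Dirac case and uniform in $x$, interfaces included. For $u_{0,1}=\sum_k w_{1,k}\delta_{x_k}$, set $W(x,t)=\sum_k w_{1,k}e^{-(\|x-x_k\|^2-d_1(x)^2)/(4t)}\in[\min_k w_{1,k},1]$. Then $-4t\log u_1=2dt\log(4\pi t)+d_1^2-4t\log W$. Using $s e^{-s/(4t)}\le 4t/e$ for $s\ge0$, one gets $\sup_x\bigl|t\,\partial_t(-4t\log u_1)\bigr|+\sup_x\bigl|{-4t\log u_1}-d_1^2\bigr|\le C\,t(1+|\log t|)$, and likewise for $u_2$. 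So no merely time-integrated control is needed. Your barrier estimate $\Phi_\lambda(Y_\tau)\le\Phi_\lambda(Y_\sigma)+o(1)$, uniformly in $\tau\ge\sigma$ as $\sigma\to\infty$, follows at once.

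\textbf{Trade-off.} Outside the empirical setting, this quantitative estimate would require separate work. The paper's compactness route would carry over wherever the shift-limit theorem and the autonomous theory are available.
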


\begin{rem}[On the conditional convergence criterion]
\label{rem:conditional_convergence_criterion}
Theorem~\ref{thm:empirical_convergence_criterion} is a conditional convergence
result. Its assumption excludes the presence of non-minimizing critical points
in the \(\omega\)-limit set of the particular rescaled trajectory \(Y\). In the
MoE regime, criticality is understood with respect to
\(\mathrm{Crit}(\Phi_\lambda)\), while in the CFG regime it is understood with
respect to \(\mathrm{Crit}_{\mathrm{out}}(\Phi_\lambda)\). Thus the hypothesis
is a condition on the trajectory, not a property proved here.

The only possible obstruction comes from the nonsmooth interface. Indeed, on
each smooth cell of the empirical decomposition, \(\Phi_\lambda\) is quadratic
with Hessian \(2I\), so every smooth critical point is a strict local minimizer.
Therefore any non-minimizing critical point must lie in
\(\mathrm{ND}(A_1,A_2)\). Proving that such points are avoided, for instance
for Lebesgue-almost-every terminal datum \(x_T\), would yield a generic
unconditional convergence result. This remains open. The numerical experiments
in Section~\ref{sec:numerics} are consistent with the conditional hypothesis
being satisfied along the simulated trajectories.
\end{rem}

\begin{cor}[Rate near a smooth local minimizer in the empirical setting]
\label{cor:empirical_rate_local_min}
Assume that the hypotheses of
Theorem~\ref{thm:empirical_convergence_criterion} hold, and let
\(x^\ast\in\mathrm{Min}(\Phi_\lambda)\) be the limit point of the rescaled
trajectory \(Y\).

\begin{enumerate}
    \item In the MoE regime \(0\le\lambda\le1\), the point \(x^\ast\) is automatically a
    smooth minimizer of \(\Phi_\lambda\). Then there exists a
    constant \(C>0\) such that
    \[
    \|Y_\tau-x^\ast\|\le C e^{-\tau/2}
    \qquad \forall \tau\ge0.
    \]
    Equivalently,
    \[
    \|X_t-x^\ast\|\le C\sqrt t
    \qquad \forall t\in(0,T].
    \]

    \item In the CFG regime \(\lambda>1\), assume in addition that
    \(x^\ast\notin \mathrm{ND}(A_1,A_2)\). Then there exists a constant \(C>0\) such that
    \[
    \|Y_\tau-x^\ast\|\le C e^{-\tau/2}
    \qquad \forall \tau\ge0.
    \]
    Equivalently,
    \[
    \|X_t-x^\ast\|\le C\sqrt t
    \qquad \forall t\in(0,T].
    \]
\end{enumerate}
\end{cor}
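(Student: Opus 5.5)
We prove the corollary by linearizing the rescaled flow near \(x^\ast\): the limit field is affine with contraction rate \(1/2\), and the non-autonomous correction decays exponentially in similarity time.

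\textbf{Step 1: \(x^\ast\) is a smooth point.} In the MoE regime we show that \(x^\ast\notin \mathrm{ND}(A_1,A_2)\). Suppose \(x^\ast\) lies on an interface of \(d_1^2\), say, so two nearest points \(a,a'\in A_1\) exist. Near \(x^\ast\) we have \(d_1^2=\min_k\|x-x_k\|^2\), a minimum of quadratics, and \(d_2^2\) has the same form. For \(0\le\lambda\le1\) both coefficients are nonnegative, so \(\Phi_\lambda\) is locally a weighted sum of such minima. Along the direction \(a'-a\), the function \(\Phi_\lambda\) restricted to a line through \(x^\ast\) therefore has a concave kink at \(x^\ast\): the one-sided derivatives satisfy \(D^+<D^-\). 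Hence either \(D^+<0\) or \(D^->0\), so \(x^\ast\) cannot be a local minimizer. The edge case \(\lambda\in\{0,1\}\), where one term vanishes, needs a small check. It follows that in both regimes \(x^\ast\) has a neighborhood \(B_r(x^\ast)\) on which the nearest points \(a_1,a_2\) are unique and constant. On that ball
\[
\Phi_\lambda(x)=\|x-m\|^2+c,\qquad m=\lambda a_1+(1-\lambda)a_2,
\]
and \(m=x^\ast\), since \(x^\ast\) is critical.

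\textbf{Step 2: quantitative gradient estimate on \(B_r\).} On \(B_r(x^\ast)\), use the mean-shift representation of the Dirac-mixture score:
\[
-\tfrac14\nabla F_\lambda(x,t)=-\tfrac12\bigl(x-\lambda\bar a_1(x,t)-(1-\lambda)\bar a_2(x,t)\bigr),
\]
where \(\bar a_i\) are Gibbs averages of the atoms with weights \(w\,e^{-\|x-x_k\|^2/4t}\). Since the nearest atom is unique with a uniform gap \(\delta>0\) in squared distance on \(B_r\), we get \(\|\bar a_i-a_i\|\le Ce^{-\delta/4t}\). In similarity time \(t=Te^{-\tau}\), this gives
\[
\dot Y_\tau=-\tfrac12 (Y_\tau-x^\ast)+R(\tau),\qquad \|R(\tau)\|\le Ce^{-c e^{\tau}},
\]
which is super-exponentially small while \(Y_\tau\in B_r(x^\ast)\).

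\textbf{Step 3: Grönwall.} By Theorem~\ref{thm:empirical_convergence_criterion}, \(Y_\tau\to x^\ast\), so there is \(\tau_0\) with \(Y_\tau\in B_r\) for all \(\tau\ge\tau_0\). Set \(e_\tau=Y_\tau-x^\ast\). Variation of constants gives
\[
\|e_\tau\|\le e^{-(\tau-\tau_0)/2}\|e_{\tau_0}\|+\int_{\tau_0}^\tau e^{-(\tau-s)/2}\|R(s)\|\,ds\le C e^{-\tau/2},
\]
because \(\int e^{s/2}\|R(s)\|\,ds<\infty\). On \([0,\tau_0]\) the trajectory is continuous, hence bounded, so the bound extends to all \(\tau\ge0\) after enlarging \(C\). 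Substituting \(e^{-\tau/2}=\sqrt{t/T}\) yields \(\|X_t-x^\ast\|\le C\sqrt t\).

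\textbf{Main obstacle.} The delicate step is Step 1 in the MoE regime: we must show that every point of \(\mathrm{ND}(A_1,A_2)\) is a strict non-minimizer. This includes points where the two interfaces intersect, and configurations where the kinks of \(d_1^2\) and \(d_2^2\) might cancel. The plan handles this by noting that both kinks are concave with nonnegative weights, so they add rather than cancel. Step 2's exponential error bound is routine once the gap \(\delta\) is fixed.
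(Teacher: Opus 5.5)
Your proposal is correct and follows essentially the same route as the paper. Both establish the local quadratic form of $\Phi_\lambda$ at $x^\ast$, then apply the exponential mean-shift estimate of Lemma~\ref{lem:quantitative_convergence_smooth_field}, then conclude by variation of constants; your one-sided-derivative kink argument is a variant of the paper's proof in Lemma~\ref{lem:critical_points_and_local_minimizers_empirical}, which shows that all active branch gradients of $\min_{k,\ell}Q^\lambda_{k\ell}$ must vanish at a local minimizer. The endpoint check you flag is resolved as in the paper: for $\lambda\in\{0,1\}$ the inactive support enters neither $\Phi_\lambda$ nor $\nabla F_\lambda$, so only uniqueness of the nearest point in the active support is needed.
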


\begin{proof}
The proofs are deferred to Section~\ref{sec:proof_empirical_convergence_and_rate}. They rely on the compactness approach for asymptotically autonomous systems described there: first, local minimizers are shown to be local traps for the autonomous limiting inclusions; next, this trap structure is combined with time-shift compactness and the convergence of limiting trajectories to prove Theorem~\ref{thm:empirical_convergence_criterion}; finally, Corollary~\ref{cor:empirical_rate_local_min} follows from a local perturbative argument near a smooth stable minimizer.
\end{proof}

\begin{rem}[Endpoint regimes and pure imitation]
\label{rem:endpoint_pure_imitation_unified}
If \(\lambda=0\) (resp.~\(\lambda=1\)), then the mixed score reduces to the second (resp.~first) expert. In this case, every local minimizer of \(\Phi_\lambda\) belongs to \(A_2\) (resp.~\(A_1\)), corresponding to pure imitation of the selected dataset. Moreover, the rate in Corollary~\ref{cor:empirical_rate_local_min} is consistent with the one obtained in~\cite[Thm.~3.11]{liuzuazua2025}.
\end{rem}

\begin{rem}[Rates at interface minimizers in the CFG regime]
\label{rem:cfg_interface_rate}
In the CFG regime \(\lambda>1\), Corollary~\ref{cor:empirical_rate_local_min}
establishes the rate \(\mathcal O(\sqrt t)\) only when the limiting minimizer
\(x^\ast\) is a smooth point of \(\Phi_\lambda\), namely when
\(x^\ast\notin \mathrm{ND}(A_1,A_2)\). No such rate is proved here for
minimizers lying on the interface set \(\mathrm{ND}(A_1,A_2)\). This restriction
is genuine: as illustrated in Figure~\ref{fig:CFG}, for sufficiently large
\(\lambda\), CFG local minimizers may occur precisely on this interface. In
that case, the present analysis yields qualitative convergence, under the
hypothesis of Theorem~\ref{thm:empirical_convergence_criterion}, but not a
quantitative rate. Establishing sharp convergence rates at interface
minimizers, possibly under suitable transversality assumptions on the Voronoi
geometry, remains an open problem.
\end{rem}

\subsection{Geometric interpretation in the MoE and CFG regimes}

We conclude this section with a geometric interpretation of the two guidance
regimes. Recall that \(A_1\) and \(A_2\) denote the supports of the initial
measures \(u_{0,1}\) and \(u_{0,2}\), and that
\[
\Phi_\lambda(x)
=
\lambda d_1(x)^2+(1-\lambda)d_2(x)^2
\]
depends only on \(A_1\), \(A_2\), and the mixing parameter \(\lambda\). The
preceding results show that, in the small-time regime, the generation dynamics
is organized by this geometric landscape: time-shift limits are governed by the
critical structure of \(\Phi_\lambda\), while local minimizers represent the
stable states selected by the dynamics.

\paragraph{MoE regime \(\boldsymbol{0\le \lambda\le 1}\).}
In this regime,
\[
\Phi_\lambda(x)
=
\min_{y_1\in A_1,\ y_2\in A_2}
\Bigl(
\lambda\|x-y_1\|^2
+
(1-\lambda)\|x-y_2\|^2
\Bigr).
\]
Thus \(\Phi_\lambda\) is a cooperative combination of the squared distances to
the two supports. Its local minimizers may be interpreted as geometric
barycenters balancing proximity to both datasets.

The MoE regime therefore has a comparatively rigid geometric structure. The
nonsmoothness of \(\Phi_\lambda\) arises only from projection-switching
interfaces, while the potential retains the semiconcavity of squared-distance
functions. Consequently, the limiting autonomous dynamics is governed by the
genuine Clarke subgradient flow of \(\Phi_\lambda\). Hence, under the
conditional criterion of Theorem~\ref{thm:empirical_convergence_criterion}, the
original generation flow converges to a local minimizer of \(\Phi_\lambda\).

If \(A_1\cap A_2\neq\varnothing\), then every point in the overlap is a global
minimizer of \(\Phi_\lambda\). If the supports are disjoint, the minimizers are
typically located between the two supports rather than on either one. In this
case, the generated states interpolate geometrically between the two datasets.
Figure~\ref{fig:MOE} illustrates this behavior in a two-dimensional empirical
example.

\begin{figure}[ht]
  \centering
  \includegraphics[width=1\textwidth]{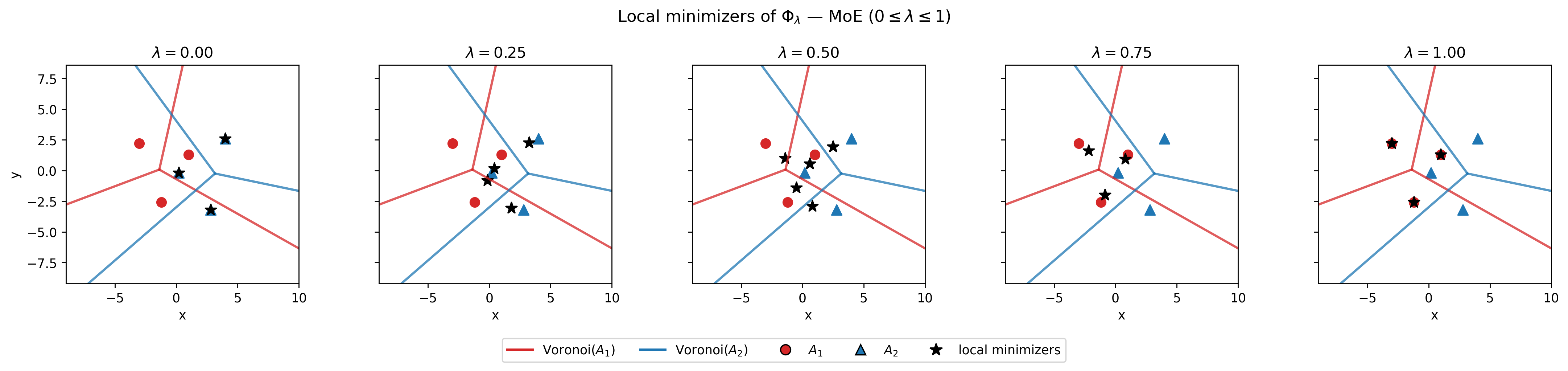}
  \caption{Local minimizers of \(\Phi_{\lambda}\) in the MoE regime. As
  \(\lambda\) increases from \(0\) to \(1\), the minimizers trace a geometric
  interpolation from \(A_2\) toward \(A_1\).}
  \label{fig:MOE}
\end{figure}

\paragraph{CFG regime \(\boldsymbol{\lambda>1}\).}
In the CFG regime,
\[
\Phi_\lambda(x)
=
\lambda d_1(x)^2-(\lambda-1)d_2(x)^2
=
\min_{y_1\in A_1}\max_{y_2\in A_2}
\Bigl(
\lambda\|x-y_1\|^2
-
(\lambda-1)\|x-y_2\|^2
\Bigr).
\]
The potential is therefore no longer a cooperative average. It has an
extrapolative, competitive structure: the dynamics is attracted toward the
target support \(A_1\) while being pushed away from the nearest regions of
\(A_2\). This provides a geometric interpretation of classifier-free guidance,
where \(A_1\) represents the target conditional structure and \(A_2\) plays the
role of a broader background or reference distribution.

This extrapolative structure also makes the CFG regime more singular than the
MoE regime. The negative coefficient in front of \(d_2^2\) destroys the
semiconcavity mechanism in general, and the nonsmooth interfaces may become
active components of the limiting dynamics. This is why the asymptotic
description naturally involves the larger outer Clarke structure rather than
only the genuine Clarke subdifferential.

In particular, if \(A_1\subset A_2\), then the set of minimizers of \(\Phi_\lambda\) is exactly \(A_1\). Thus, the dynamics is naturally biased toward points in \(A_1\). This provides a geometric interpretation of guidance: the flow favors configurations that remain compatible with the target class \(A_1\) while being repelled from features that are only characteristic of the broader class \(A_2\). This is consistent with the intuition behind CFG in diffusion models, as introduced in~\cite{ho2021classifierfree}: one starts from a more general generative model and guides it toward a target conditional class in order to improve the fidelity and visual quality of the generated samples.

Figure~\ref{fig:CFG} illustrates the local minimizers of \(\Phi_\lambda\) in
the CFG regime for the same finite Dirac-mixture example. As \(\lambda\)
increases, minimizers may approach the interface
\(\mathrm{ND}(A_1,A_2)\) and then evolve along it, reflecting the stronger role
of nonsmooth geometry in the extrapolative regime.

\begin{figure}[ht]
  \centering
  \includegraphics[width=1\textwidth]{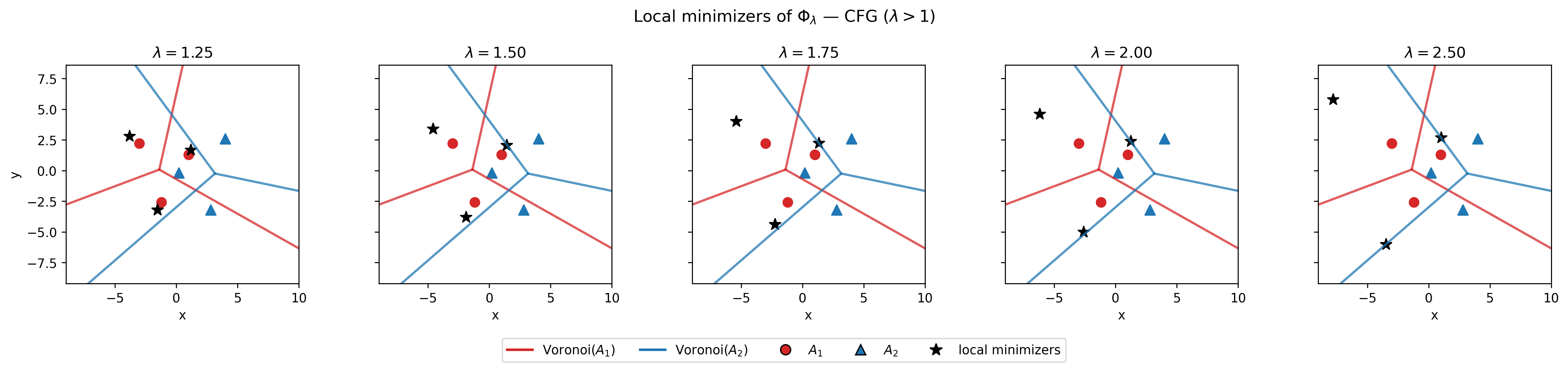}
  \caption{Local minimizers of \(\Phi_{\lambda}\) in the CFG regime. As
  \(\lambda\) increases, the minimizers may touch the interface
  \(\mathrm{ND}(A_1,A_2)\) and then evolve along it.}
  \label{fig:CFG}
\end{figure}

\medskip
In summary, the MoE regime has a cooperative geometric structure, leading to
barycentric interpolation between the two supports and to a relatively rigid
Clarke gradient dynamics. The CFG regime has a competitive extrapolative
structure, combining attraction toward \(A_1\) with repulsion from the nearest
regions of \(A_2\). This destroys semiconcavity in general and amplifies
interface effects, which is reflected in the appearance of the larger outer
Clarke structure in the limiting dynamics.

\section{PDE and SDE viewpoints: Hamilton--Jacobi structure, energy estimates, and stochastic approximation}
\label{subsec:pde_viewpoint}

The previous sections focused on the deterministic generation dynamics and its
asymptotic reduction, via similarity-time rescaling, to autonomous nonsmooth
systems driven by the geometric potential \(\Phi_\lambda\). In particular, we
showed that time-shift limits are governed by the Clarke differential inclusion
in the MoE regime and by the outer Clarke inclusion in the CFG regime. In the
empirical setting, the associated autonomous limiting systems converge to
critical points of \(\Phi_\lambda\). Under the additional assumption that the
\(\omega\)-limit set contains no non-minimizing critical point, this yields
convergence of the original generation flow toward a local minimizer, together
with explicit rates in the smooth stable case.

The goal of the present section is to complement this deterministic geometric
analysis with PDE, Hamilton--Jacobi, and stochastic viewpoints. We first explain
how the heat-flow Hessian bounds imply Li--Yau estimates and, after logarithmic
rescaling, uniform semiconcavity of the potential \(F=-4t\log u\). The same
rescaled potential satisfies a non-autonomous viscous Hamilton--Jacobi equation,
whose small-time limit is consistent with the squared-distance eikonal
structure of the geometric potential. We then use the Li--Yau bounds to derive
\(L^p\)-energy estimates for the backward Fokker--Planck equation with mixed
score drift, revealing a sharp stability difference between the MoE and CFG
regimes. Finally, we rewrite the noisy generation process in similarity-time
variables and interpret it as a vanishing-viscosity perturbation of the limiting
geometric dynamics.

\subsection{Li--Yau, semiconcavity, and Hamilton--Jacobi structure}
\label{subsec:liyau_hj_semiconcavity}

We first describe the Hamilton--Jacobi structure underlying the rescaled
logarithmic potential. For clarity, we begin with a single heat flow.

Let \(u\) solve the heat equation with compactly supported initial measure
\(u_0\), and set
\[
A=\operatorname{supp}(u_0).
\]
For \(t>0\), define the rescaled logarithmic potential
\[
F(x,t):=-4t\log u(x,t).
\]
By the matrix lower bound established in \cite[Lem.~5.1]{liuzuazua2025}, one has
\[
\mathrm{Hess}(\log u(x,t))\succeq -\frac{1}{2t}I_d.
\]
Taking traces yields the classical Li--Yau inequality~\cite{li1986parabolic}
\[
\Delta \log u(x,t)\ge -\frac{d}{2t}.
\]
Equivalently, in terms of \(F=-4t\log u\), the same estimate becomes
\[
\mathrm{Hess}(F(x,t))\preceq 2I_d,
\qquad
\Delta F(x,t)\le 2d.
\]
Thus \(F(\cdot,t)\) is semiconcave with constant \(2\), uniformly for
\(t>0\). In this sense, the Li--Yau inequality is the trace form of a stronger
semiconcavity estimate for the rescaled logarithmic potential.

The same potential also satisfies a viscous Hamilton--Jacobi equation. Indeed,
introducing the logarithmic variable
\[
V=\log u,
\]
namely the Hopf--Cole transform of the positive heat solution \(u\), the heat
equation becomes
\[
\partial_t V=\Delta V+|\nabla V|^2,
\qquad t>0.
\]
Since \(F=-4tV\), a direct computation gives
\[
\partial_t F
=
\frac{F}{t}
+\Delta F
-\frac{1}{4t}|\nabla F|^2,
\qquad t>0.
\]
Therefore \(F\) is a classical solution of a non-autonomous viscous
Hamilton--Jacobi equation.

On the other hand, the Laplace--Varadhan principle yields the local uniform
convergence
\[
F(\cdot,t)\to d_A^2
\qquad\text{as }t\to0^+,
\]
where \(d_A(x)=\operatorname{dist}(x,A)\); see
Lemma~\ref{lem:bounds_u_and_V_Phi}. The limiting profile \(d_A^2\) retains the
same semiconcavity constant \(2\); see
Remark~\ref{rem:semiconcavity_geometric_potential}.

The limiting profile is also related to the eikonal equation. More precisely,
\(d_A\) is the viscosity solution of
\[
|\nabla d_A|=1
\]
away from the target set \(A\), with \(d_A=0\) on \(A\), in the standard
target-set sense; see \cite[Ch.~II]{bardi1997optimal} and
\cite{crandall1992users}. Consequently, the selected squared-distance profile
\[
\Phi=d_A^2
\]
satisfies
\[
|\nabla \Phi|^2=4\Phi
\]
in the viscosity sense. This stationary Hamilton--Jacobi relation can also be viewed as the formal
small-time limit of the equation satisfied by \(F\). Indeed, rewriting the
viscous Hamilton--Jacobi equation as
\[
|\nabla F|^2
=
4F-4t\,\partial_t F+4t\,\Delta F,
\]
and formally letting \(t\to0^+\) at the level of the equation, one obtains again 
\[
|\nabla \Phi|^2=4\Phi.
\]

Altogether, the discussion is summarized in Figure~\ref{fig:liyau_semiconcavity_hj}.

\begin{figure}[ht]
\centering
\[
\begin{tikzcd}[column sep=5.7em, row sep=4.8em]
|[text width=3.7cm]|
\begin{array}{c}
\text{Heat flow: } u(\cdot,t)\\[1mm]
\text{Li--Yau (matrix form):}\\
\mathrm{Hess}(\log u)\succeq -\dfrac{1}{2t}I_d
\end{array}
\arrow[r, "{\text{rescaling}}"]
\arrow[d, "{\text{small-time limit}}", "{t\to0^+}"']
&
|[text width=2.5cm]|
\begin{array}{c}
F=-4t\log u\\[1mm]
2\text{-semiconcave}
\end{array}
\arrow[r, "{\text{Hopf--Cole}}"]
\arrow[d, "{\text{Laplace--Varadhan}}", "{t\to0^+}"']
&
|[text width=4.7cm]|
\begin{array}{c}
\text{Non-autonomous viscous HJ}\\[1mm]
\partial_t F
=
\dfrac{F}{t}
+\Delta F
-\dfrac{1}{4t}|\nabla F|^2\\
\text{classical solution}
\end{array}
\arrow[d, dashed, "{\text{formal limit}}", "{t\to0^+}"']
\\
|[text width=3.5cm]|
\begin{array}{c}
\text{Initial data: } u_0\\[1mm]
\operatorname{supp}(u_0)=A
\end{array}
\arrow[r, "{\text{distance}}"]
&
|[text width=2.5cm]|
\begin{array}{c}
\Phi=d_A^2\\[1mm]
2\text{-semiconcave}
\end{array}
\arrow[r, "{\text{eikonal form}}"]
&
|[text width=4.5cm]|
\begin{array}{c}
\text{Stationary non-viscous HJ}\\[1mm]
|\nabla \Phi|^2=4\Phi\\[1mm]
\text{viscosity solution}
\end{array}
\end{tikzcd}
\]
\caption{Li--Yau, semiconcavity, and Hamilton--Jacobi structure for the rescaled logarithmic potential.}
\label{fig:liyau_semiconcavity_hj}
\end{figure}

\color{black}

\subsection{Energy estimate in the mixed case}

We now return to the mixed-score setting. Recall the backward Fokker--Planck
equation with noise~\eqref{eq:FP_t}:
\begin{equation}\label{eq:FP_t_Cauchy}
\begin{cases}
\partial_t \rho_{\varepsilon}
+\varepsilon\,\Delta \rho_{\varepsilon}
-(1+\varepsilon)\,\mathrm{div}\!\bigl(\rho_{\varepsilon}\,\nabla V_\lambda\bigr)=0,
& (x,t)\in \R^d\times(0,T),\\[1mm]
\rho_{\varepsilon}(\cdot,T)=v_T,
& x\in \R^d,
\end{cases}
\end{equation}
where
\[
\nabla V_\lambda
=
\lambda\nabla\log u_1+(1-\lambda)\nabla\log u_2.
\]

By the Li--Yau bound recalled above, in the MoE regime \(0\le\lambda\le1\),
the convexity of the coefficients gives
\begin{equation}\label{eq:DeltaV_lower_moe}
\Delta V_\lambda(x,t)\ge -\frac{d}{2t},
\qquad
\forall (x,t)\in\R^d\times(0,\infty).
\end{equation}

In the CFG regime \(\lambda>1\), the coefficient \(1-\lambda\) is negative.
Thus the lower Li--Yau bound for \(\log u_2\) is no longer sufficient, and one
also needs an upper bound. Assume that \(u_{0,2}\) is compactly supported, and
let
\[
R=\sup\{\|x\|:\ x\in\operatorname{supp}(u_{0,2})\}.
\]
By the matrix upper estimate in \cite[Lem.~5.1]{liuzuazua2025},
\[
\mathrm{Hess}(\log u_2(x,t))
\preceq
\left(-\frac{1}{2t}+\frac{R^2}{4t^2}\right)I_d.
\]
Taking traces gives
\[
\Delta \log u_2(x,t)
\le
-\frac{d}{2t}+\frac{dR^2}{4t^2}.
\]
Combining this estimate with the Li--Yau lower bound for \(u_1\), we obtain
\begin{equation}\label{eq:DeltaV_lower_cfg}
\Delta V_\lambda(x,t)
\ge
-\frac{d}{2t}-(\lambda-1)\frac{dR^2}{4t^2},
\end{equation}
for all \((x,t)\in\R^d\times(0,\infty)\).

Based on these lower bounds on \(\Delta V_\lambda\), we obtain the following
\(L^p\)-estimate.

\begin{thm}[Energy estimate]\label{thm:energy_score}
Let \(u_1\) and \(u_2\) be the solutions of the heat equations~\eqref{eq:heat}
with initial data \(u_{0,1}\) and \(u_{0,2}\), both probability measures. Fix
\(p\in[1,\infty)\), and let \(v_T\in L^p(\R^d)\).

\begin{itemize}
\item \textbf{MoE regime \(\boldsymbol{0\le \lambda\le 1}\).}
Equation~\eqref{eq:FP_t_Cauchy} admits a unique solution
\[
\rho_\varepsilon\in C\bigl((0,T];L^p(\R^d)\bigr),
\]
and
\begin{equation}\label{eq:energy_score_moe}
\|\rho_\varepsilon(t)\|_{L^p}
\le
\left(\frac{T}{t}\right)^{\frac{d(1+\varepsilon)(p-1)}{2p}}
\|v_T\|_{L^p},
\qquad \forall t\in(0,T].
\end{equation}

\item \textbf{CFG regime \(\boldsymbol{\lambda>1}\).}
Assume that \(u_{0,2}\) has compact support, and let \(R\) be its radius. Then
equation~\eqref{eq:FP_t_Cauchy} admits a unique solution
\[
\rho_\varepsilon\in C\bigl((0,T];L^p(\R^d)\bigr),
\]
and
\begin{equation}\label{eq:energy_score_cfg}
\|\rho_\varepsilon(t)\|_{L^p}
\le
\left(\frac{T}{t}\right)^{\frac{d(1+\varepsilon)(p-1)}{2p}}
\exp\!\left(
\frac{(1+\varepsilon)(p-1)(\lambda-1)dR^2}{4p}
\left(\frac{1}{t}-\frac{1}{T}\right)
\right)
\|v_T\|_{L^p},
\qquad \forall t\in(0,T].
\end{equation}
\end{itemize}
\end{thm}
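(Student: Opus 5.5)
The plan is a standard $L^p$-energy argument for the backward Fokker--Planck equation~\eqref{eq:FP_t_Cauchy}, with the only input being the lower bounds~\eqref{eq:DeltaV_lower_moe} and~\eqref{eq:DeltaV_lower_cfg} on $\Delta V_\lambda$. This follows the single-score strategy of \cite[Thm.~3.1]{liuzuazua2025}. We work on $[t_0,T]$ for fixed $t_0>0$. There $\nabla V_\lambda$ is smooth, with $\nabla V_\lambda$ globally Lipschitz in $x$ (mean-shift representation of the score with compactly supported data) and all derivatives controlled. Existence and uniqueness of a solution in $C([t_0,T];L^p)$ therefore follow from linear parabolic theory (for $\varepsilon>0$) or from the characteristic flow of~\eqref{eq:generation_ode} and the Lipschitz theory of transport equations (for $\varepsilon=0$). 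Letting $t_0\to0^+$ gives $\rho_\varepsilon\in C((0,T];L^p)$. Uniqueness on each $[t_0,T]$ also follows from the a priori estimate below applied to differences, since the equation is linear.

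For the estimate, take $p>1$ first; the case $p=1$ is mass conservation, consistent with the exponent vanishing. We compute $\frac{d}{dt}\int|\rho|^p$ for smooth, decaying approximations of $v_T$ and then pass to the limit by density. With $\rho=\rho_\varepsilon$, the equation gives $\partial_t\rho=-\varepsilon\Delta\rho+(1+\varepsilon)\operatorname{div}(\rho\nabla V_\lambda)$. Then:
\begin{itemize}
\item The diffusion term yields $\varepsilon p(p-1)\int|\rho|^{p-2}|\nabla\rho|^2\ge0$.
\item The drift term, after integrating $\nabla(|\rho|^p)\cdot\nabla V_\lambda$ by parts, yields $-(1+\varepsilon)(p-1)\int|\rho|^p\Delta V_\lambda$.
\end{itemize}
Hence
\[
\frac{d}{dt}\|\rho(t)\|_{L^p}^p\ \ge\ -(1+\varepsilon)(p-1)\int|\rho|^p\,\Delta V_\lambda\ \ge\ \ldots
\]
This sign is the correct one, because we integrate backward from $T$. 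Writing $\Delta V_\lambda\ge -g(t)$ gives $\frac{d}{dt}\|\rho\|_p^p\ge -(1+\varepsilon)(p-1)g(t)\|\rho\|_p^p$. Grönwall on $[t,T]$ then yields
\[
\|\rho(t)\|_p^p\le \exp\Bigl((1+\varepsilon)(p-1)\int_t^T g(s)\,ds\Bigr)\|v_T\|_p^p .
\]

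It remains to insert the two bounds on $g$:
\begin{itemize}
\item In the MoE regime, $g=d/(2s)$ gives $\int_t^T g=\frac d2\log(T/t)$. Taking $p$-th roots gives~\eqref{eq:energy_score_moe}.
\item In the CFG regime, $g=d/(2s)+(\lambda-1)dR^2/(4s^2)$. The extra term integrates to $(\lambda-1)\frac{dR^2}{4}(1/t-1/T)$, and taking $p$-th roots gives~\eqref{eq:energy_score_cfg} exactly.
\end{itemize}

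The main obstacle is justifying the integrations by parts and the differentiation of $\|\rho\|_p^p$. The drift $\nabla V_\lambda$ grows linearly at infinity (like $-x/(2t)$ plus bounded terms), so boundary terms must be shown to vanish. For $1<p<2$, the function $|\rho|^p$ is also not $C^2$. I would handle this as follows:
\begin{itemize}
\item Regularize with $\eta_\delta(\rho)=(\rho^2+\delta)^{p/2}-\delta^{p/2}$ and use cutoffs $\chi(x/R')$.
\item Control the commutator terms using $|\nabla V_\lambda(x,t)|\le C_{t_0}(1+|x|)$ on $[t_0,T]$.
\item Work with Schwartz or compactly supported initial data $v_T$, for which the solution and its gradient decay rapidly (Gaussian tails for $\varepsilon>0$; compact support propagated by the Lipschitz flow for $\varepsilon=0$).
\end{itemize}
Removing the cutoff and sending $\delta\to0$ gives the differential inequality. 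The resulting bound is uniform in the data, so density of smooth data in $L^p$ extends the estimate, together with continuity in time, to all $v_T\in L^p$.
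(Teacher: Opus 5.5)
Your proposal follows the same route as the paper: differentiate $\|\rho_\varepsilon(t)\|_{L^p}^p$, discard the nonnegative diffusion contribution, insert the lower bounds \eqref{eq:DeltaV_lower_moe} and \eqref{eq:DeltaV_lower_cfg}, and integrate on $[t,T]$. Your exponents coincide with the paper's constants $A=\frac{d(1+\varepsilon)(p-1)}{2}$ and $B=\frac{(1+\varepsilon)(p-1)(\lambda-1)dR^2}{4}$.

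There is, however, a sign error in the drift term that you should fix. It matters, because this sign is exactly why a \emph{lower} bound on $\Delta V_\lambda$ is the relevant one. The correct computation is
\[
(1+\varepsilon)\,p\int|\rho|^{p-2}\rho\,\operatorname{div}(\rho\nabla V_\lambda)
=(1+\varepsilon)\Bigl(\int\nabla|\rho|^p\cdot\nabla V_\lambda+p\int|\rho|^p\Delta V_\lambda\Bigr)
=(1+\varepsilon)(p-1)\int|\rho|^p\,\Delta V_\lambda ,
\]
so $\frac{d}{dt}\|\rho\|_{L^p}^p\ge (1+\varepsilon)(p-1)\int|\rho|^p\Delta V_\lambda$, with a plus sign. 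With the minus sign as you wrote it, the next step $\ge -(1+\varepsilon)(p-1)g(t)\|\rho\|_{L^p}^p$ would require an \emph{upper} bound $\Delta V_\lambda\le g(t)$. That bound is not available under the hypotheses; in the CFG regime, for instance, it would need upper Hessian control of $\log u_1$. With the correct sign, $\Delta V_\lambda\ge -g(t)$ gives exactly your differential inequality, and your Gr\"onwall step and both final estimates are then right.

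A smaller point concerns your well-posedness sketch. It uses global Lipschitz continuity and linear growth of $\nabla V_\lambda$ on $[t_0,T]$, obtained from the mean-shift formula for compactly supported data. The statement, however, assumes only probability measures in the MoE regime, and compact support of $u_{0,2}$ alone in the CFG regime. For a general probability measure the covariance of the posterior $\nu^x_{t,i}$, and hence $\mathrm{Hess}\log u_i$, need not be bounded above. That part of your argument therefore covers only compactly supported data. The paper does not prove well-posedness itself; it defers to the argument of \cite[Thm.~3.1]{liuzuazua2025}.
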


\begin{proof}
The MoE estimate~\eqref{eq:energy_score_moe} follows by the energy method of
\cite[Thm.~3.1]{liuzuazua2025}, where the Li--Yau lower bound
\(\Delta V_\lambda\ge -d/(2t)\) in~\eqref{eq:DeltaV_lower_moe} is used to
control the divergence term in the time derivative of
\(\|\rho_\varepsilon(t)\|_{L^p}^p\).

The CFG case is genuinely different: the lower bound
\eqref{eq:DeltaV_lower_cfg} contains an additional \(1/t^2\)-singular term,
and the corresponding energy inequality must be integrated explicitly. Arguing
as in the proof of \cite[Thm.~3.1]{liuzuazua2025}, but using
\eqref{eq:DeltaV_lower_cfg}, we obtain
\[
\frac{d}{dt}\|\rho_\varepsilon(t)\|_{L^p}^p
\ge
-\left(
\frac{A}{t}
+
\frac{B}{t^2}
\right)
\|\rho_\varepsilon(t)\|_{L^p}^p,
\]
where
\[
A=\frac{d(1+\varepsilon)(p-1)}{2},
\qquad
B=\frac{(1+\varepsilon)(p-1)(\lambda-1)dR^2}{4}.
\]
Dividing by \(\|\rho_\varepsilon(t)\|_{L^p}^p\), we get
\[
\frac{d}{dt}\log \|\rho_\varepsilon(t)\|_{L^p}^p
\ge
-\frac{A}{t}-\frac{B}{t^2}.
\]
Integrating from \(t\) to \(T\), we infer
\[
\log \|v_T\|_{L^p}^p-\log \|\rho_\varepsilon(t)\|_{L^p}^p
\ge
-A\log\!\left(\frac{T}{t}\right)-B\left(\frac{1}{t}-\frac{1}{T}\right).
\]
Exponentiating yields
\[
\|\rho_\varepsilon(t)\|_{L^p}^p
\le
\left(\frac{T}{t}\right)^A
\exp\!\left(
B\left(\frac{1}{t}-\frac{1}{T}\right)
\right)
\|v_T\|_{L^p}^p.
\]
Taking the \(p\)-th root gives~\eqref{eq:energy_score_cfg}.
\end{proof}

\begin{rem}[MoE versus CFG: a stability perspective]
\label{rem:moe_cfg_stability}
The previous estimates reveal a qualitative difference between the MoE and CFG
regimes. In the MoE case \(0\le\lambda\le1\), the \(L^p\)-bound grows at most
polynomially as \(t\to0^+\). In the CFG case \(\lambda>1\), the estimate
contains the additional singular factor
\[
\exp\!\left(
C\left(\frac1t-\frac1T\right)
\right),
\]
which reflects a substantially stronger instability near \(t=0\).

This is consistent with the asymptotic dynamical picture obtained in the
previous section. In the deterministic case \(\varepsilon=0\), the backward
characteristic dynamics in the MoE regime is asymptotically governed by the
genuine Clarke gradient flow associated with \(\Phi_\lambda\). By contrast, in
the CFG regime, the asymptotic description requires the larger outer Clarke
differential inclusion, which allows more possible limiting vector fields,
especially near nonsmooth interfaces. Thus, both at the PDE level through the
energy estimates and at the dynamical level through the limiting inclusions,
the CFG regime exhibits a less rigid and less stable behavior than the MoE
regime.
\end{rem}

\begin{rem}[On the lack of an entropy estimate in the mixing setting]
\label{rem:no_entropy_mixing}
In the non-mixing case, one can derive an entropy estimate that is stronger
than the \(L^p\)-energy estimate, since it is independent of the space
dimension; see \cite[Sec.~3.2]{liuzuazua2025}. More precisely, one proves that
the Kullback--Leibler divergence between the solution of the backward
generative Fokker--Planck equation and the corresponding solution of the
original heat equation is nonincreasing backward in time. This contraction
property plays a key role in showing that the generative distribution
concentrates toward the support of the initial data.

Such an argument is no longer directly available in the present mixing
framework. Indeed, in general there is no natural reference probability density
associated with the mixed score field. A natural candidate would be
\(u_1^\lambda u_2^{1-\lambda}\), but this function is not, in general,
normalized. In the MoE regime \(0\le\lambda\le1\), its total mass is bounded by
\(1\) by H\"older's inequality, while in the CFG regime \(\lambda>1\), it may
even have mass larger than \(1\). Moreover, this product satisfies a
reaction--diffusion equation and therefore does not evolve by a mass-preserving
dynamics. Consequently, the relative entropy with respect to this candidate no
longer has a direct probabilistic meaning, and the contraction argument used in
the non-mixing case does not apply.

This is one of the reasons why, in the present work, we adopt a geometric
approach to the convergence of generative flows. We focus on the deterministic
generation dynamics, for which the asymptotic analysis is cleaner and more
transparent. Extending this geometric picture to the stochastic setting remains
a natural direction for future work. We now provide a first step in this direction by
rewriting the noisy generation process in similarity time, where it appears as
a vanishing-viscosity perturbation of the limiting geometric dynamics.
\end{rem}

\subsection{Diffusive generation and stochastic approximation}
\label{subsec:diffusive-generation}

As in the deterministic case, we perform the similarity-time change of variables
for~\eqref{eq:FP_t_Cauchy},
\[
\tau=\log(T/t),
\qquad
t=Te^{-\tau},
\qquad
\tilde{\rho}_\varepsilon(y,\tau)
\coloneqq
\rho_\varepsilon\bigl(y,Te^{-\tau}\bigr).
\]
A direct computation gives
\[
\partial_\tau \tilde{\rho}_\varepsilon(y,\tau)
=
-t\,\partial_t\rho_\varepsilon(y,t)\big|_{t=Te^{-\tau}},
\]
so that \(\tilde{\rho}_\varepsilon\) solves
\begin{equation}\label{eq:FP_rescaled}
\begin{cases}
\partial_\tau \tilde{\rho}_\varepsilon
-\varepsilon\,Te^{-\tau}\Delta \tilde{\rho}_\varepsilon
-\dfrac{1+\varepsilon}{4}\,
\mathrm{div}\Bigl(\tilde{\rho}_\varepsilon\,\nabla F_\lambda(\cdot,Te^{-\tau})\Bigr)=0,
& (y,\tau)\in\R^d\times(0,\infty),\\[2mm]
\tilde{\rho}_\varepsilon(\cdot,0)=v_T,
\end{cases}
\end{equation}
where
\[
F_\lambda(x,t)=-4t\,V_\lambda(x,t)
\]
is the rescaled potential.

The corresponding stochastic generation flow is
\begin{equation}\label{eq:generation_rescaled}
\begin{cases}
dY_{\varepsilon,\tau}
=
-\dfrac{1+\varepsilon}{4}\,
\nabla F_\lambda(Y_{\varepsilon,\tau},Te^{-\tau})\,d\tau
+\sqrt{2\varepsilon T}\,e^{-\tau/2}\,dW_\tau,
\qquad \tau>0,\\[4pt]
Y_{\varepsilon,0}\sim v_T.
\end{cases}
\end{equation}
This is the similarity-time formulation of the original stochastic generation
process~\eqref{eq:generation}. Here, \((W_\tau)_{\tau\ge0}\) denotes a standard
Brownian motion in similarity time, obtained from the original Brownian motion
by the usual deterministic time change; the two formulations are equivalent in
distribution.

The key feature of~\eqref{eq:FP_rescaled} and~\eqref{eq:generation_rescaled} is
the exponentially decaying diffusion strength
\[
\varepsilon Te^{-\tau}\to 0
\qquad\text{as }\tau\to\infty.
\]
Thus the similarity-time scaling turns the noisy generation process into a
vanishing-viscosity perturbation of the deterministic rescaled dynamics. At
the same time, the drift becomes asymptotically geometric: by
Lemma~\ref{lem:Varadhan-grad}, the field \(\nabla F_\lambda(\cdot,t)\)
converges locally toward the outer Clarke subdifferential
\(\widehat{\partial}\Phi_\lambda\) as \(t\to0^+\); moreover, by
Lemma~\ref{lem:quantitative_convergence_smooth_field}, it converges locally
uniformly to \(\nabla\Phi_\lambda\) away from the nondifferentiability set
\(\mathrm{ND}(A_1,A_2)\).

When the drift of an SDE is autonomous and the noise intensity vanishes, the
problem falls within the scope of stochastic approximation theory; see, for
instance, the classical convergence results for stochastic gradient algorithms
in~\cite{robbins1951stochastic} and the asymptotic pseudotrajectory framework
in~\cite{benaim1996asymptotic}. In particular, the exponential decay rate
\(e^{-\tau}\) is much faster than the regimes considered in
\cite[Prop.~4.1]{benaim1996asymptotic}. Therefore, if the drift were already
autonomous, one would expect the stochastic dynamics to share the same
late-time behavior as its deterministic counterpart.

In the present setting, however, the drift itself varies simultaneously and
converges only toward a nonsmooth limiting regime. This coupled evolution of
the drift and the noise makes the rigorous stochastic analysis substantially
more delicate. We do not pursue such a theory at the level of theorems in this
paper. Rather, the discussion above suggests a natural future strategy:
introduce vanishing-noise perturbations of the autonomous differential
inclusions~\eqref{eq:autonomous_moe_clarke_main} and
\eqref{eq:autonomous_cfg_outer_main}, and use them as intermediate stochastic
approximations of the limiting systems. The relevant technical framework is
the stochastic approximation theory for differential inclusions developed by
Bena\"im--Hofbauer--Sorin~\cite{benaim2005stochastic}. A precise convergence
theorem for the noisy rescaled dynamics~\eqref{eq:generation_rescaled} toward
a Carath\'eodory solution of the limiting inclusion remains a natural direction
for future work.
\section{Numerical simulations}\label{sec:numerics}

This section provides numerical illustrations of the main results. We focus on
three aspects. First, we visualize the geometric potential \(\Phi_\lambda\) and
its connection with mixed heat-flow scores through the Laplace--Varadhan
principle, presented in Section~\ref{sec:laplace_varadhan_structure}. Second, we examine
the convergence of the deterministic backward generation flow
\eqref{eq:generation_ode}, in connection with the main results of
Section~\ref{sec:main_results}, and also illustrate its stochastic counterpart
\eqref{eq:generation}, discussed in Section~\ref{subsec:diffusive-generation}.
Finally, we investigate the effect of guidance on a real image dataset,
CIFAR--10.

\subsection{The finite Dirac mixtures setting}
We first focus on the empirical setting and present one- and two-dimensional experiments in order to make the underlying phenomena more transparent.

In this framework, when the initial distribution takes the form
\[
u_{0}=\sum_{k=1}^{n} w_{k}\,\delta_{x_{k}},\qquad 
w_{k}>0,\quad \sum_{k=1}^{n} w_{k}=1,
\]
the corresponding solution of the heat equation with initial data \(u_0\) is the Gaussian mixture
\begin{equation}\label{eq:solution-discrete}
u(x,t)=\sum_{k=1}^n w_{k}\,G_t(x-x_{k}),
\qquad
G_t(z)=(4\pi t)^{-d/2}\exp \Bigl(-\frac{\|z\|^2}{4t}\Bigr).
\end{equation}
Using the identity
\[
\nabla G_t(z)=-\frac{z}{2t}\,G_t(z),
\]
the exact score function can be written explicitly as
\begin{equation}\label{eq:score-discrete}
s(x,t)=\nabla\log u(x,t)
=\frac1{2t}\left(\sum_{k=1}^{n} p_{k}(x,t)\,x_{k}-x\right),
\qquad
p_{k}(x,t)=\frac{w_{k}\,G_t(x-x_k)}{\sum_{j=1}^{n} w_{j}\,G_t(x-x_{j})}.
\end{equation}
Applying~\eqref{eq:score-discrete} to the two initial measures \(u_{0,1}\) and \(u_{0,2}\), we obtain the corresponding exact score fields \(s_1\) and \(s_2\). The mixed score used for guidance is then given by
\[
s^{(\lambda)}(x,t)=\lambda\,s_1(x,t)+(1-\lambda)\,s_2(x,t),\qquad \lambda\ge 0.
\]

\paragraph{Numerical setup.}
The generation processes are simulated in similarity time
\( 
\tau=\log(T/t).
\)
Unless otherwise specified, we set \(T=1\) and \(t_{\min}=10^{-4}\), hence
\(\tau_{\max}=\log(T/t_{\min})\simeq 9.2\). The deterministic rescaled
ODE~\eqref{eq:generation_ode_Y}, equivalent to~\eqref{eq:generation_ode}, is
integrated with an explicit Euler scheme using the uniform step
\(\Delta\tau=10^{-2}\). The stochastic rescaled SDE~\eqref{eq:generation_rescaled},
corresponding to~\eqref{eq:generation}, is integrated by the Euler--Maruyama
scheme with the same step size. For stochastic experiments, the random seed is
fixed across runs to ensure reproducibility of the displayed trajectories. In
this finite-mixture setting, all score fields are computed exactly from
\eqref{eq:score-discrete}; no neural-network approximation or score-matching
estimation is involved.

\paragraph{1D.}
In the one-dimensional setting, we consider
\[
u_{0,1}=\frac{1}{3}\bigl(\delta_{-1}+\delta_{1}+\delta_{2}\bigr),
\qquad
u_{0,2}=\frac{1}{3}\bigl(\delta_{0}+\delta_{1.5}+\delta_{5}\bigr).
\]
Figure~\ref{fig:V_vs_Phi_and_det_sto_flows} compares the geometric potential \(\Phi_\lambda\) with the corresponding dynamical behavior in the one-dimensional empirical setting, for \(\lambda\in\{0.5,1,2\}\).
In the first row, we compare the log-product potential \(V_\lambda(\cdot,t)\) at \(t=10^{-4}\) with the geometric potential \(\Phi_\lambda\). We observe a clear agreement between the two profiles: by Lemma~\ref{lem:bounds_u_and_V_Phi}, when \(t\) is small, the rescaled potential \(-4t\,V_\lambda(\cdot,t)\) is close to \(\Phi_\lambda\). Moreover, the local maximizers of \(V_\lambda(\cdot,t)\) match well with the local minimizers of \(\Phi_\lambda\), in accordance with Lemmas~\ref{lem:Varadhan-grad} and~\ref{lem:quantitative_convergence_smooth_field}.
In the second row, we numerically integrate the deterministic generation dynamics~\eqref{eq:generation_ode} in rescaled time. In all three cases, the trajectories converge toward local minimizers of \(\Phi_\lambda\), consistently with Theorem~\ref{thm:empirical_convergence_criterion}.
In the third row, we simulate the stochastic rescaled generation dynamics with noise level \(\varepsilon=0.2\). The trajectories exhibit the same overall attraction toward the local minimizers of \(\Phi_\lambda\), up to visible stochastic fluctuations. This provides numerical evidence that the deterministic geometric picture remains relevant in the stochastic setting, as discussed in Section~\ref{subsec:diffusive-generation}.

\begin{figure}[htp]
    \centering

    \begin{subfigure}[t]{0.30\textwidth}
        \centering
        \includegraphics[width=\textwidth]{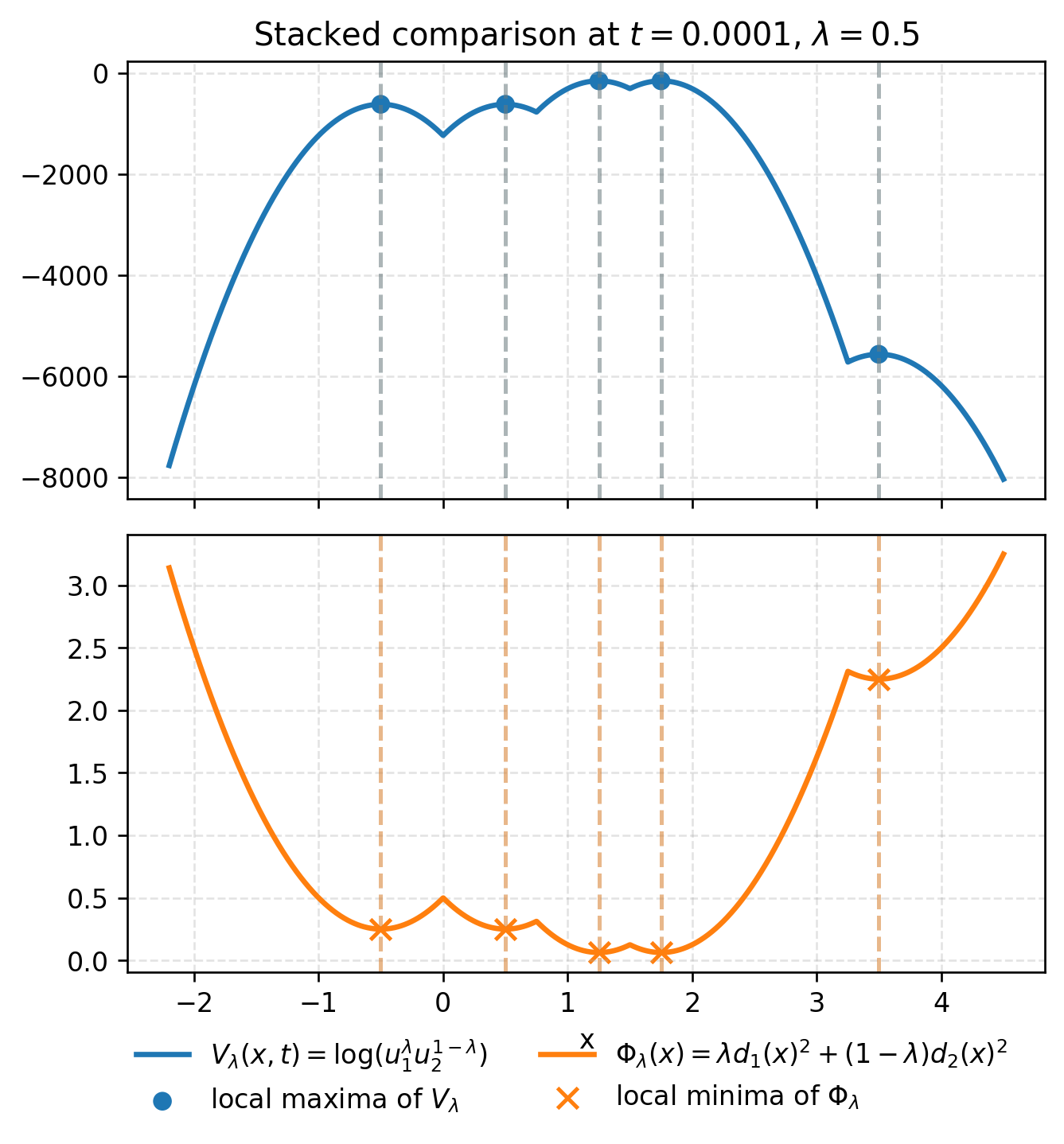}
        \caption{$\lambda=0.5$}
        \label{fig:V_vs_Phi_lam_0p5}
    \end{subfigure}\hfill
    \begin{subfigure}[t]{0.30\textwidth}
        \centering
        \includegraphics[width=\textwidth]{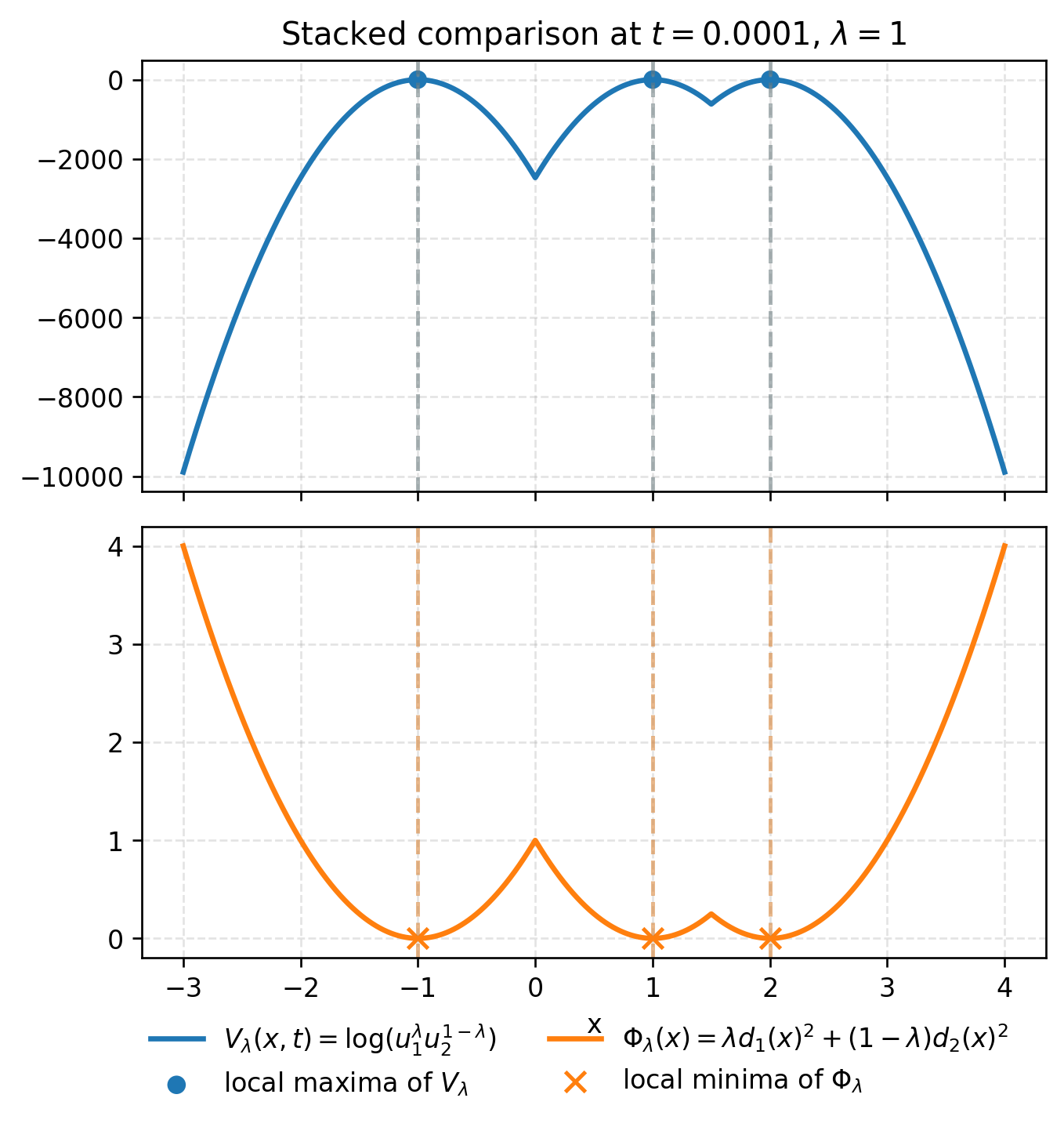}
        \caption{$\lambda=1$}
        \label{fig:V_vs_Phi_lam_1}
    \end{subfigure}\hfill
    \begin{subfigure}[t]{0.30\textwidth}
        \centering
        \includegraphics[width=\textwidth]{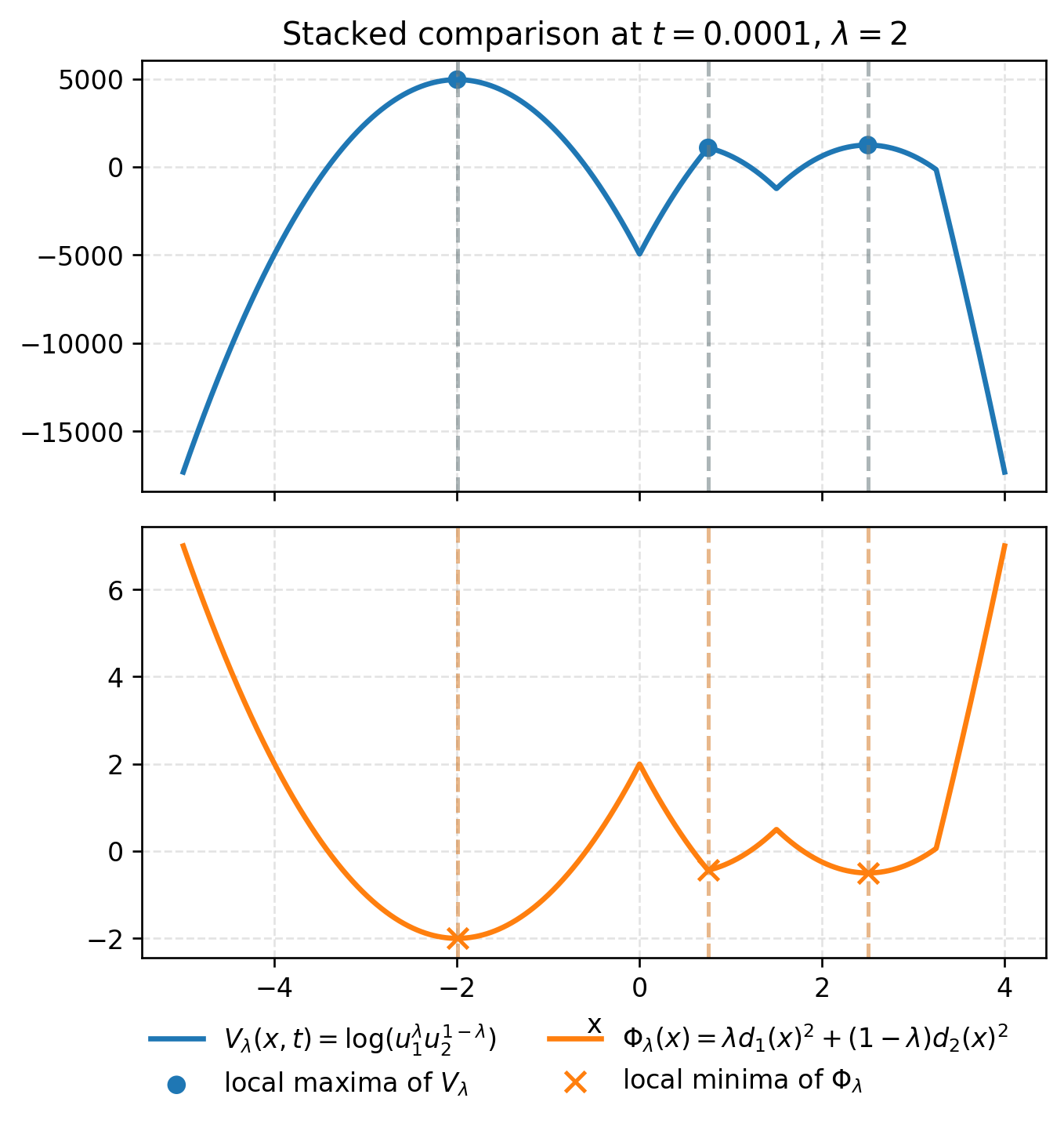}
        \caption{$\lambda=2$}
        \label{fig:V_vs_Phi_lam_2}
    \end{subfigure}

    \vspace{0.8em}

    \begin{subfigure}[t]{0.30\textwidth}
        \centering
        \includegraphics[width=\textwidth]{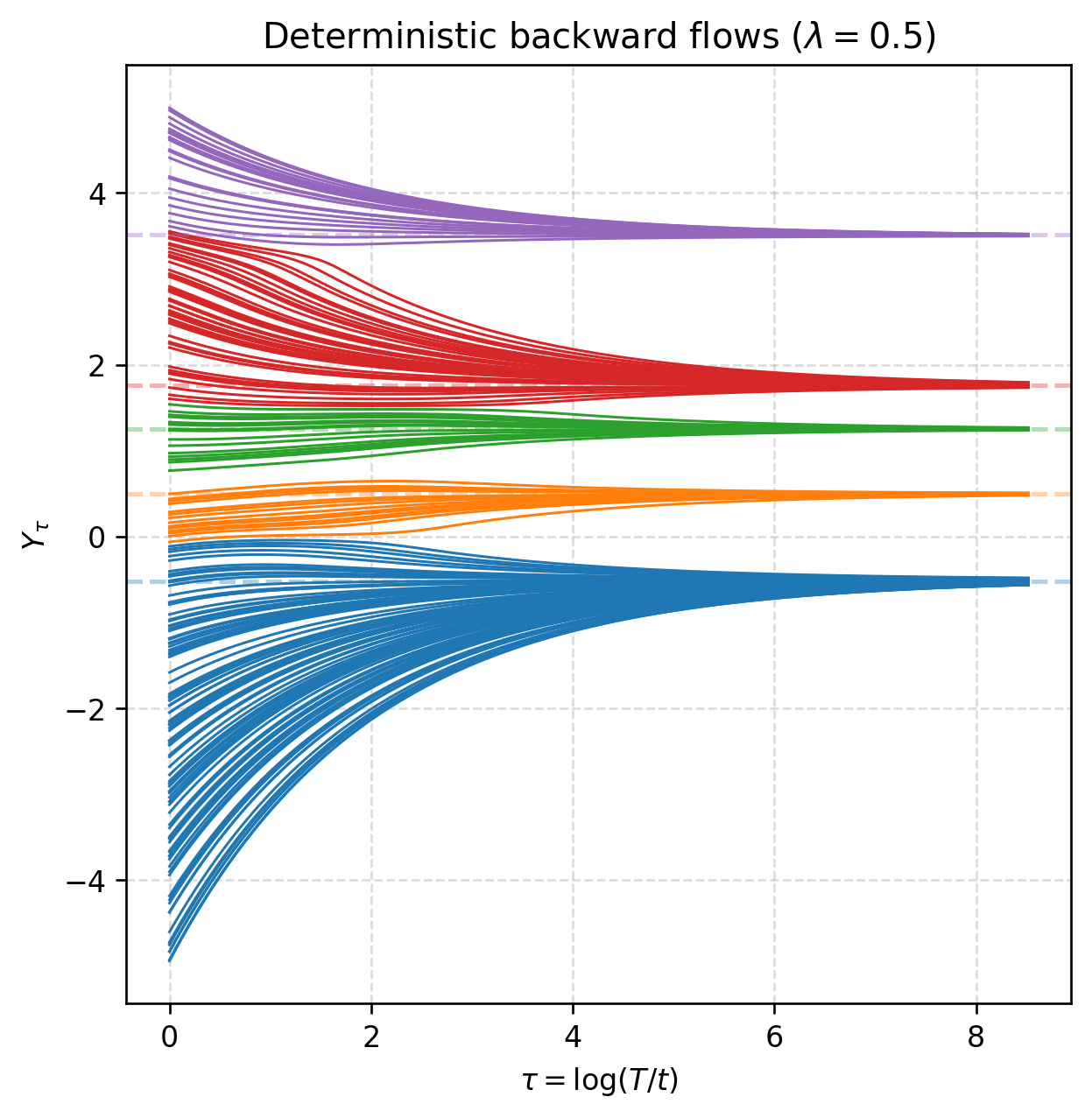}
        \caption{$\lambda=0.5,\, \varepsilon = 0$}
        \label{fig:flows_lam_0p5}
    \end{subfigure}\hfill
    \begin{subfigure}[t]{0.30\textwidth}
        \centering
        \includegraphics[width=\textwidth]{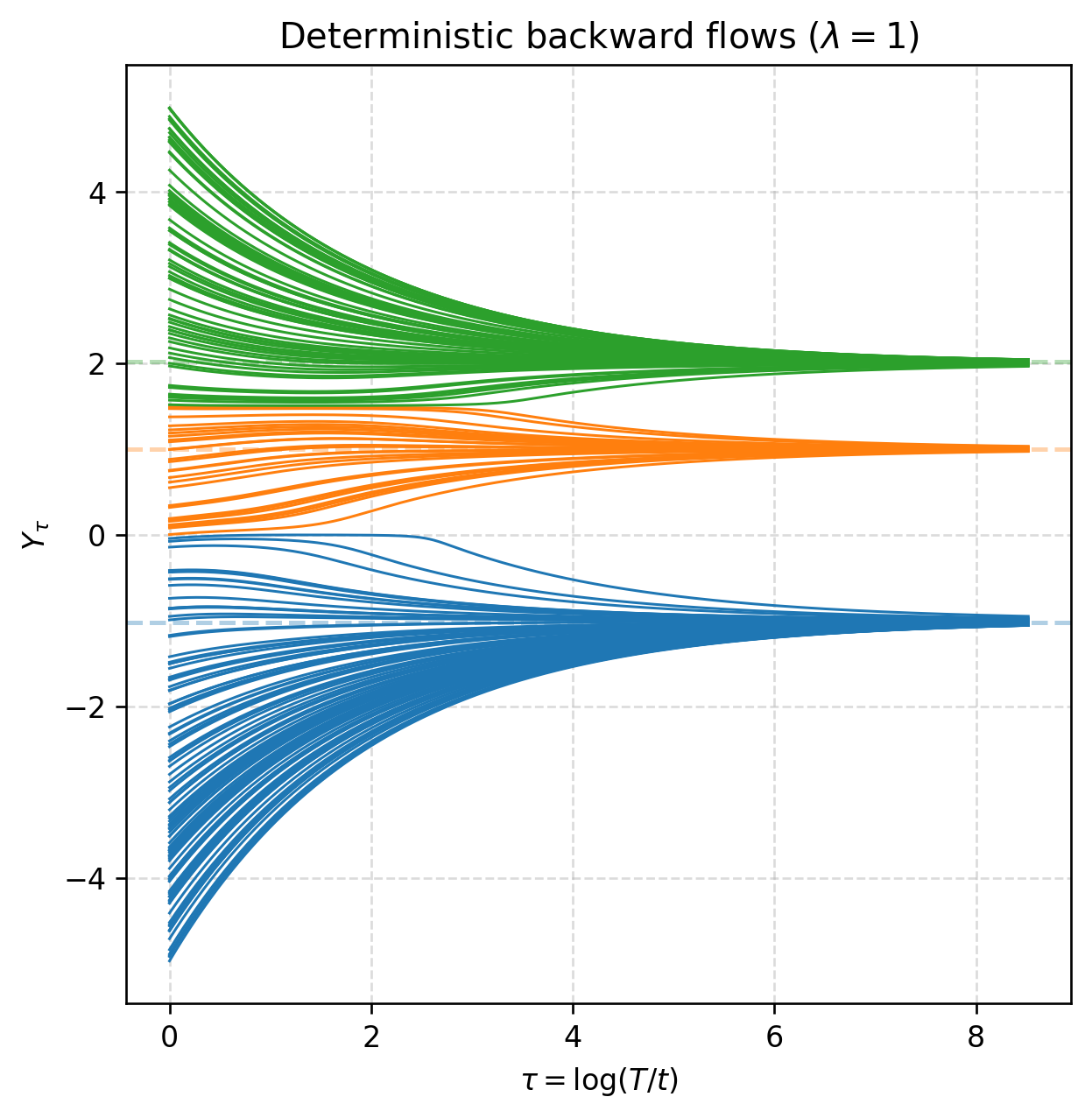}
        \caption{$\lambda=1,\, \varepsilon = 0$}
        \label{fig:flows_lam_1}
    \end{subfigure}\hfill
    \begin{subfigure}[t]{0.30\textwidth}
        \centering
        \includegraphics[width=\textwidth]{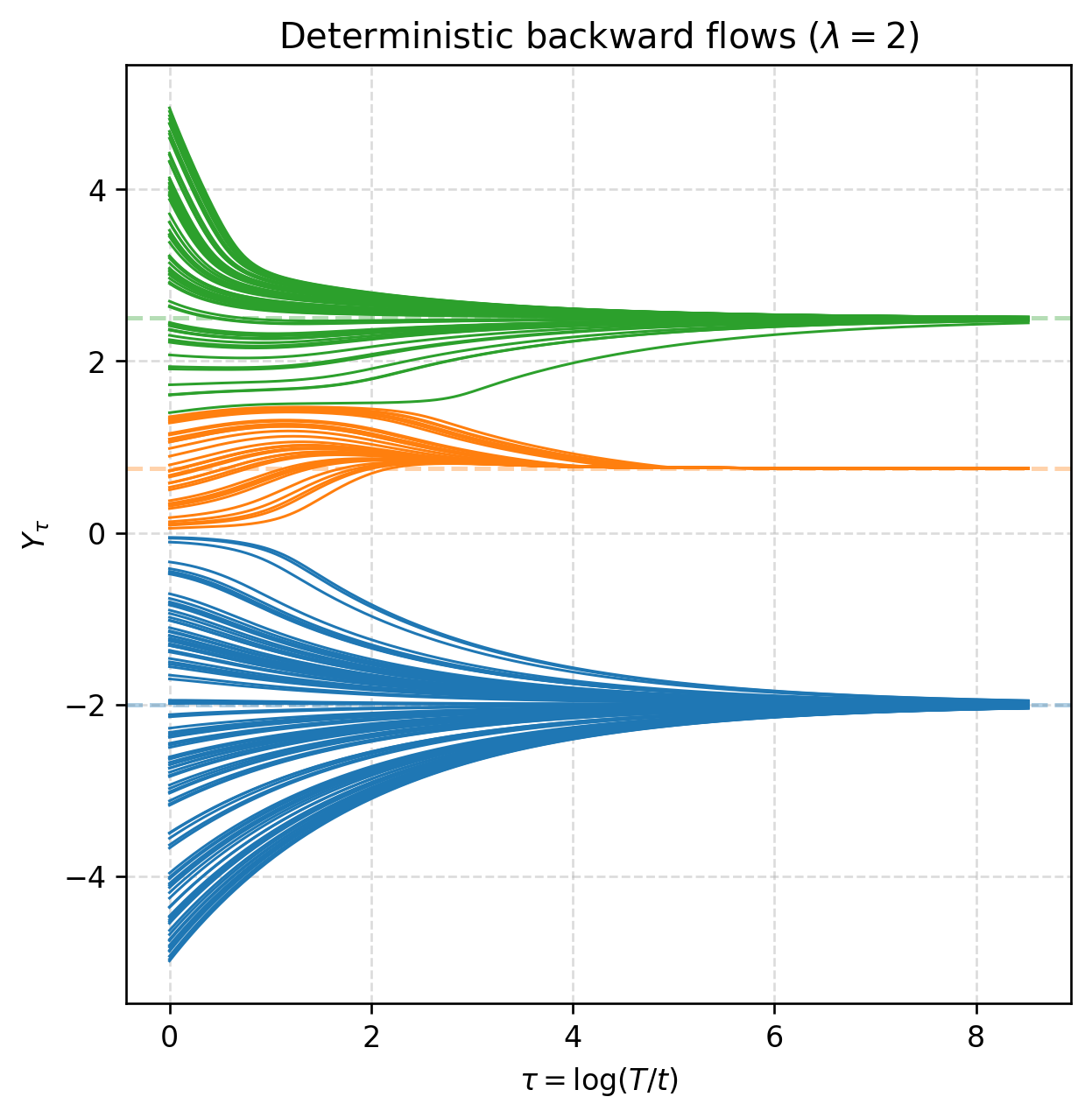}
        \caption{$\lambda=2,\, \varepsilon = 0$}
        \label{fig:flows_lam_2}
    \end{subfigure}

    \vspace{0.8em}

    \begin{subfigure}[t]{0.30\textwidth}
        \centering
        \includegraphics[width=\textwidth]{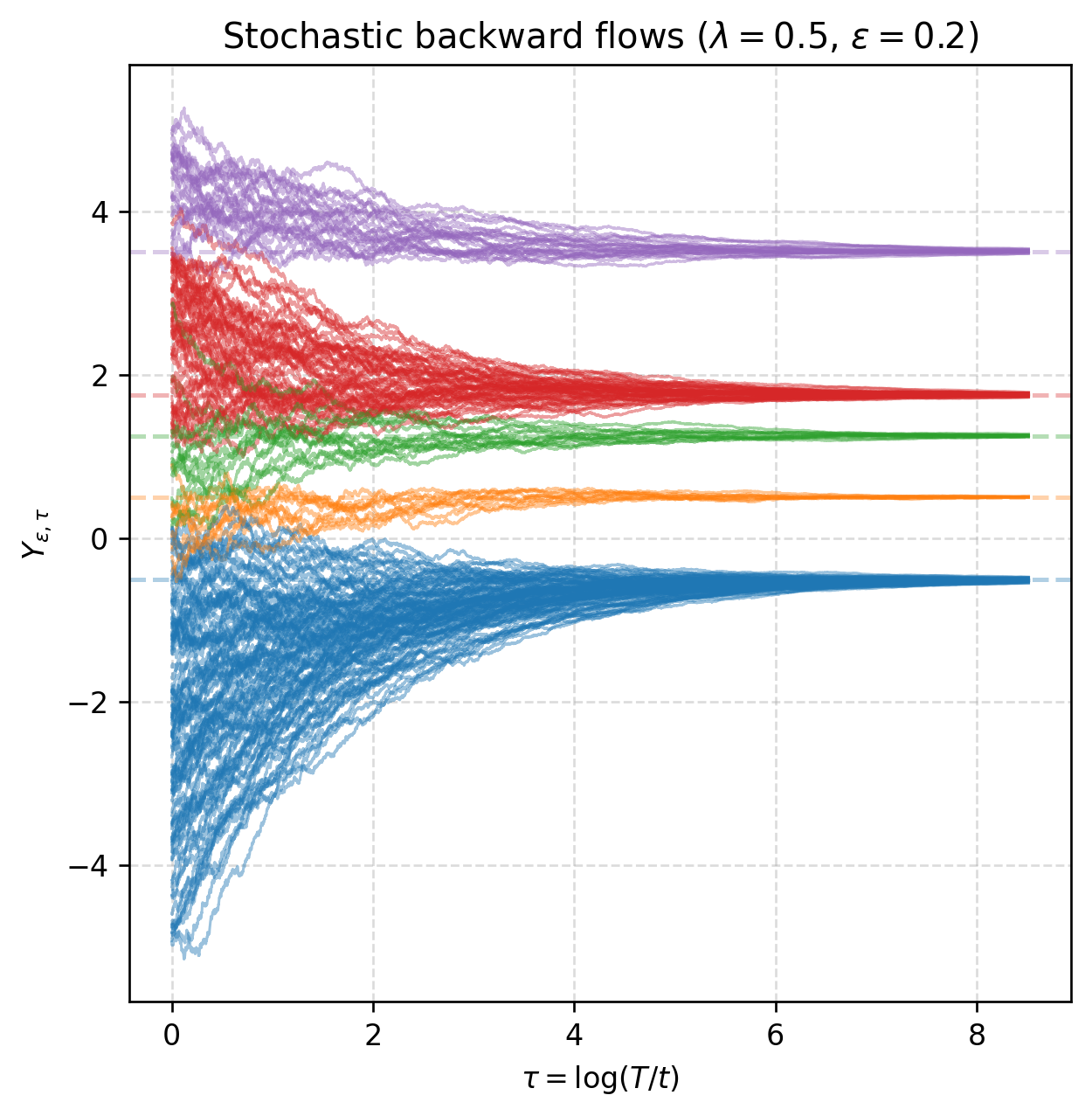}
        \caption{$\lambda=0.5,\, \varepsilon = 0.2$}
        \label{fig:sto_flows_lam_0p5}
    \end{subfigure}\hfill
    \begin{subfigure}[t]{0.30\textwidth}
        \centering
        \includegraphics[width=\textwidth]{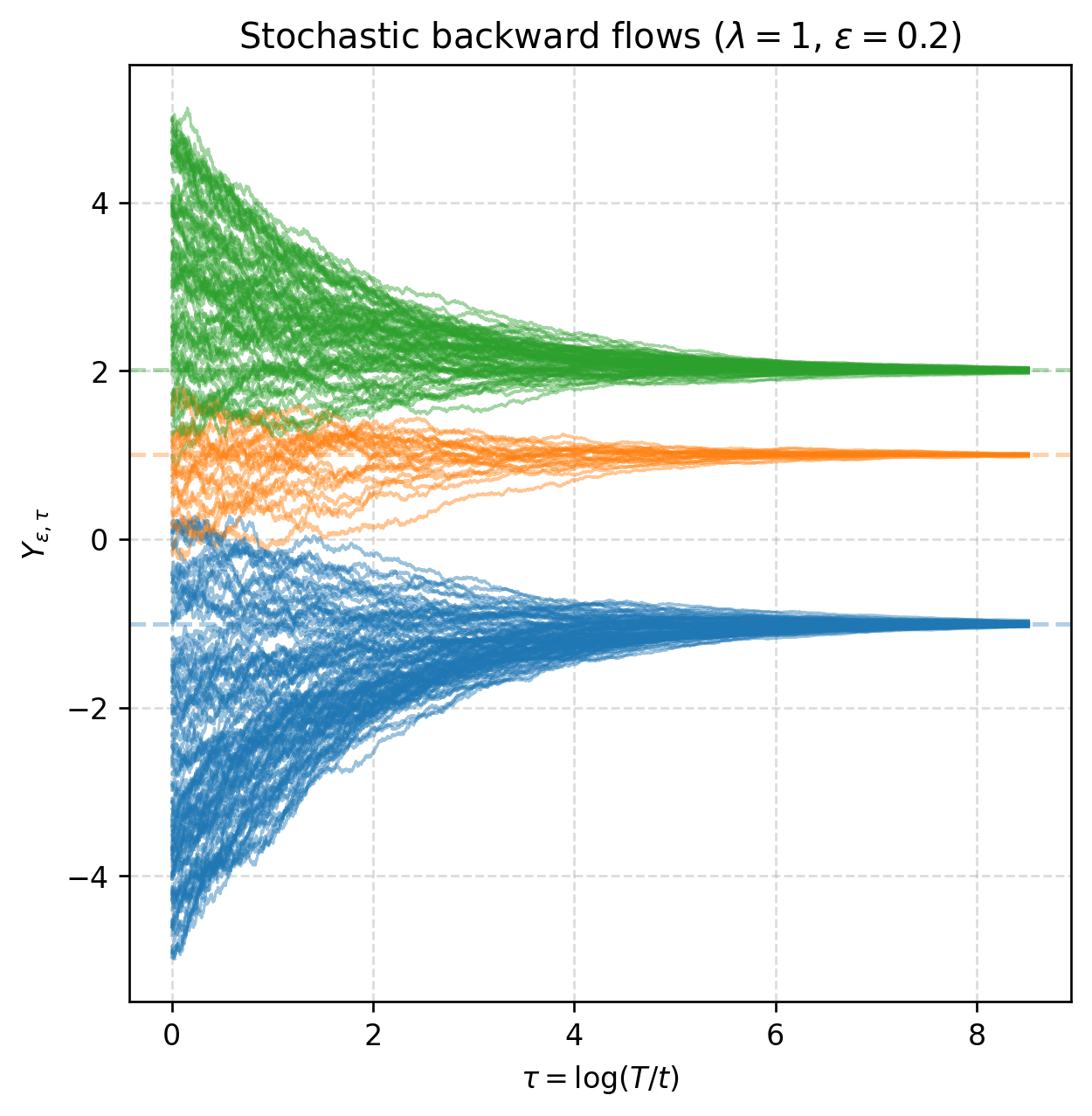}
        \caption{$\lambda=1,\, \varepsilon = 0.2$}
        \label{fig:sto_flows_lam_1}
    \end{subfigure}\hfill
    \begin{subfigure}[t]{0.30\textwidth}
        \centering
        \includegraphics[width=\textwidth]{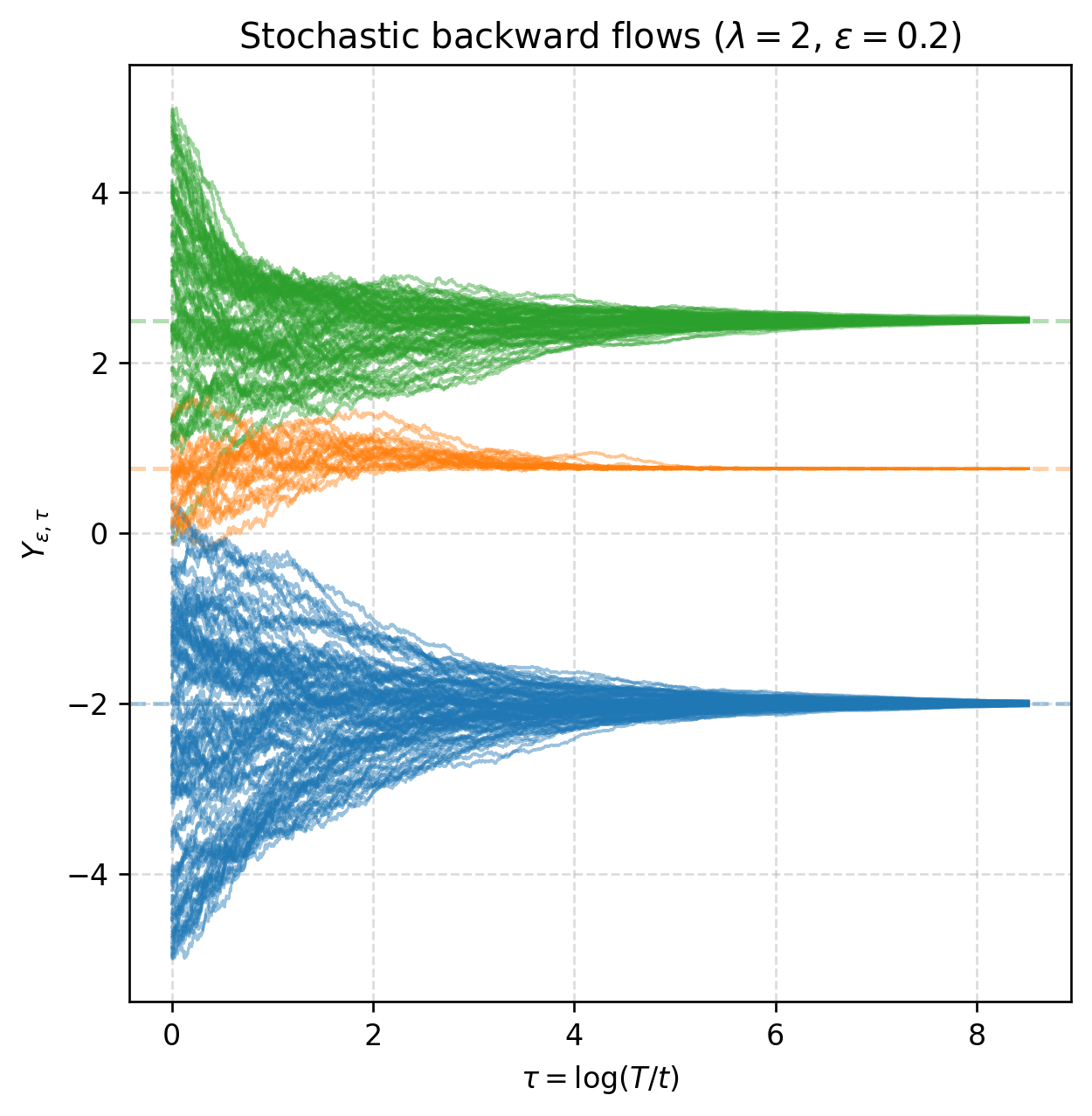}
        \caption{$\lambda=2,\, \varepsilon = 0.2$}
        \label{fig:sto_flows_lam_2}
    \end{subfigure}

   \caption{1D case. Top row: stacked visualization of the log-product potential \(V_\lambda(\cdot,t)\) at \(t=10^{-4}\) and the geometric potential \(\Phi_\lambda\), with dashed lines indicating local maximizers of \(V_\lambda(\cdot,t)\) and local minimizers of \(\Phi_\lambda\). For \(\lambda=2\), the middle local minimizer lies in the non-differentiability set \(\mathrm{ND}(A_1,A_2)\): it is the Voronoi interface point \(0.75\), midway between \(0\) and \(1.5\) in \(A_2\). Middle row: numerical integration of the deterministic rescaled gradient-flow dynamics. Bottom row: numerical integration of the stochastic rescaled dynamics with noise level \(\varepsilon=0.2\). In all cases, the trajectories are attracted toward the local minimizers of \(\Phi_\lambda\).}
    \label{fig:V_vs_Phi_and_det_sto_flows}
\end{figure}

\paragraph{2D.}
In the two-dimensional case, we consider
\[
u_{0,1}=\frac{1}{3}\Bigl(\delta_{(-3.0,\,2.2)}+\delta_{(-1.2,\,-2.6)}+\delta_{(1.0,\,1.3)}\Bigr),
\qquad
u_{0,2}=\frac{1}{3}\Bigl(\delta_{(2.8,\,-3.2)}+\delta_{(4.0,\,2.6)}+\delta_{(0.2,\,-0.2)}\Bigr).
\]
The corresponding Voronoi interfaces are displayed in Figure~\ref{fig:ND_three_panels}. The associated landscapes of \(\Phi_\lambda\) and gradient fields for \(\lambda\in\{0.5,1,2.5\}\) were presented earlier in Figure~\ref{fig:limiting_dynamics_all}.

Figure~\ref{fig:generation_2D_combined} compares the deterministic generation dynamics~\eqref{eq:generation_ode} with the stochastic generation process~\eqref{eq:generation} for \(\varepsilon=0.2\), with $T=1$ and $t_{\mathrm{min}}=10^{-4}$ fixed. In the deterministic setting, the numerically observed behavior is consistent with Theorem~\ref{thm:empirical_convergence_criterion}. In the stochastic case, although diffusion induces visible fluctuations, the trajectories still exhibit a similar concentration near the local minimizers of \(\Phi_\lambda\). This agrees with the discussion in Subsection~\ref{subsec:diffusive-generation} and further supports the relevance of the geometric picture in the stochastic regime.

\begin{figure}[ht]
    \centering
    \begin{subfigure}[t]{0.30\textwidth}
        \centering
        \includegraphics[width=\textwidth]{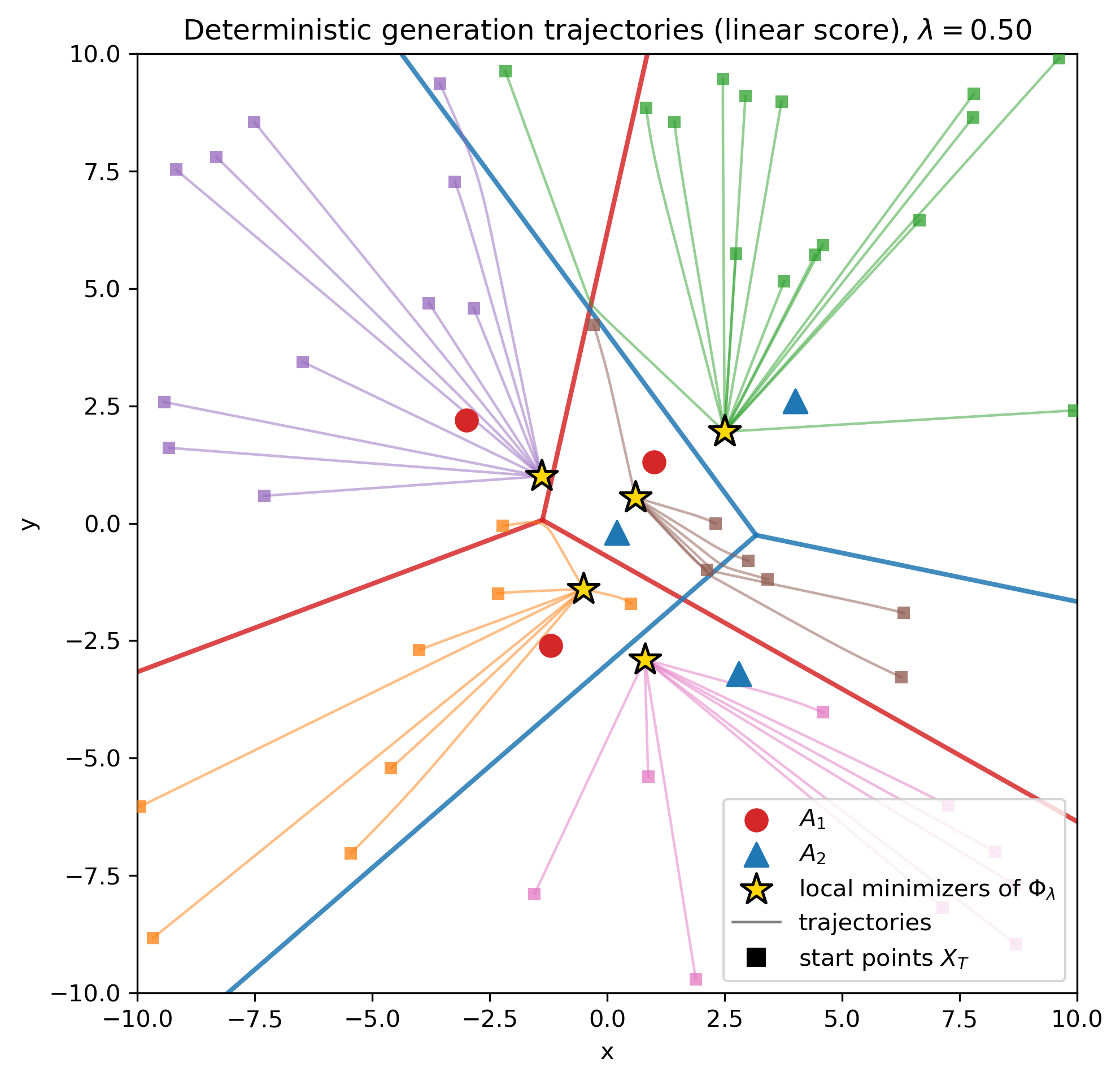}
        \caption{$\lambda =0.5,\, \varepsilon = 0$}
        \label{fig:det_lam_0_5}
    \end{subfigure}\hfill
    \begin{subfigure}[t]{0.30\textwidth}
        \centering
        \includegraphics[width=\textwidth]{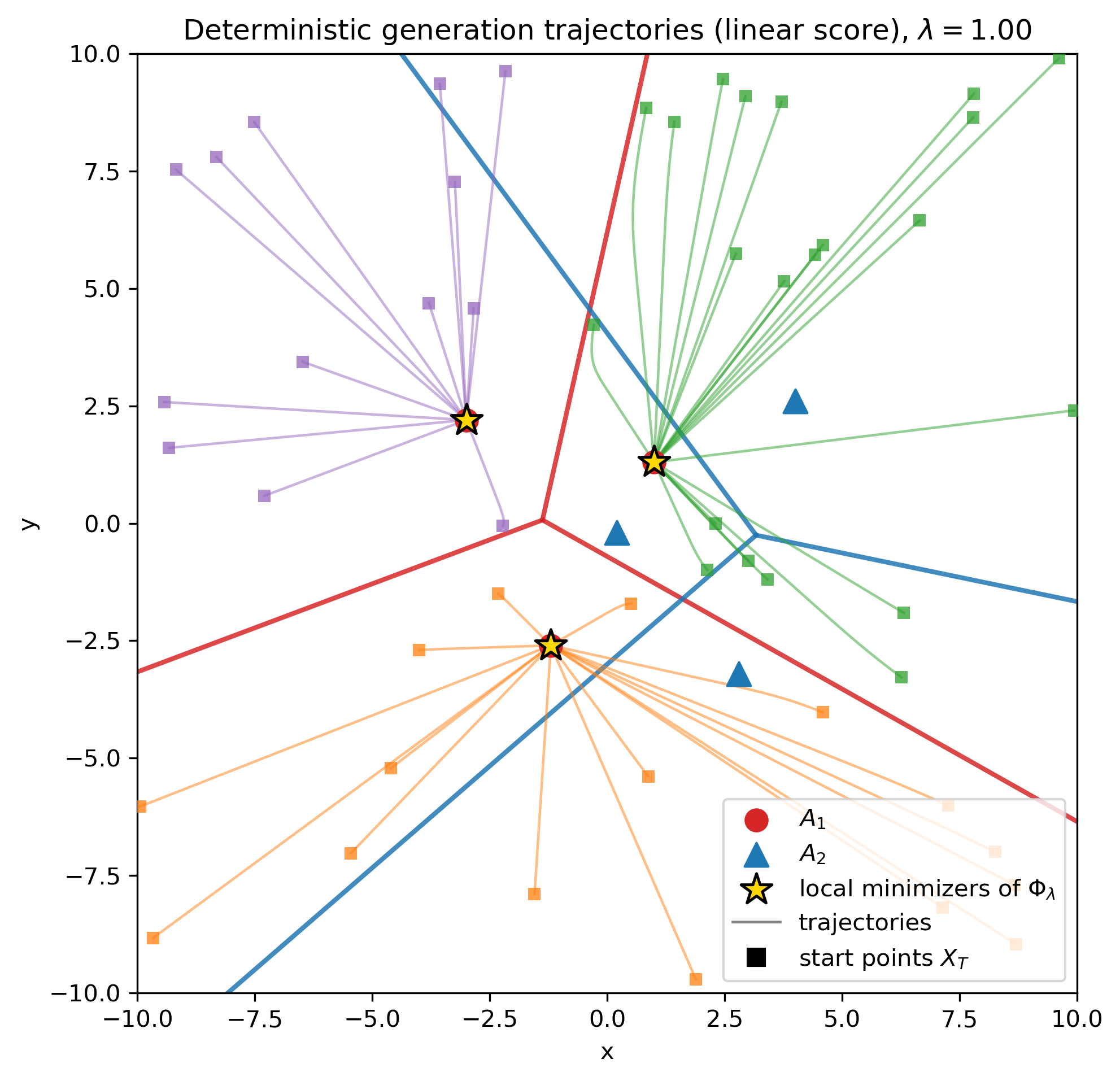}
        \caption{$\lambda =1,\, \varepsilon = 0$}
        \label{fig:det_lam_1_0}
    \end{subfigure}\hfill
    \begin{subfigure}[t]{0.30\textwidth}
        \centering
        \includegraphics[width=\textwidth]{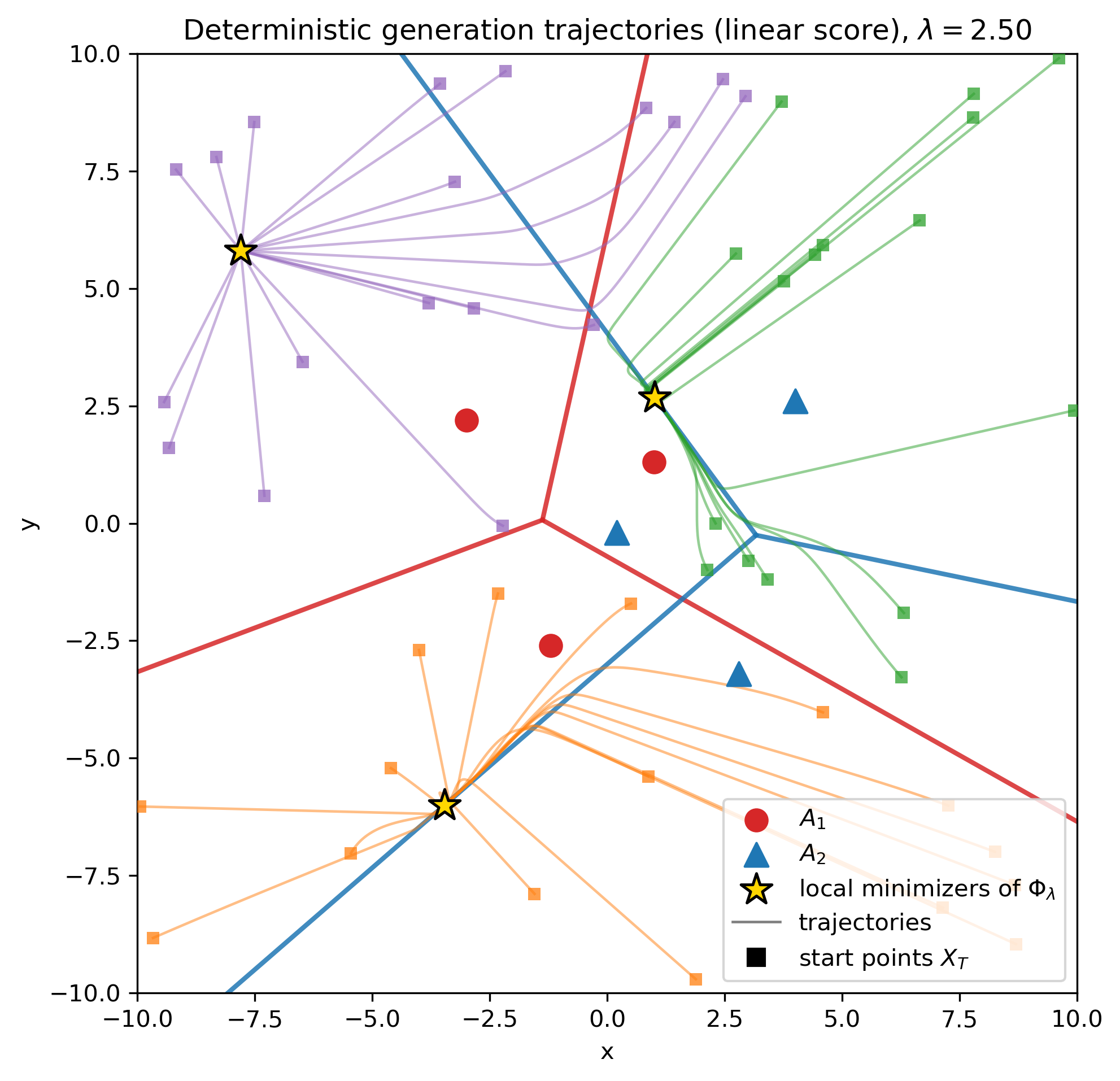}
        \caption{$\lambda =2.5, \, \varepsilon = 0$}
        \label{fig:det_lam_2_5}
    \end{subfigure}

    \vspace{0.5em}

    \begin{subfigure}[t]{0.30\textwidth}
        \centering
        \includegraphics[width=\textwidth]{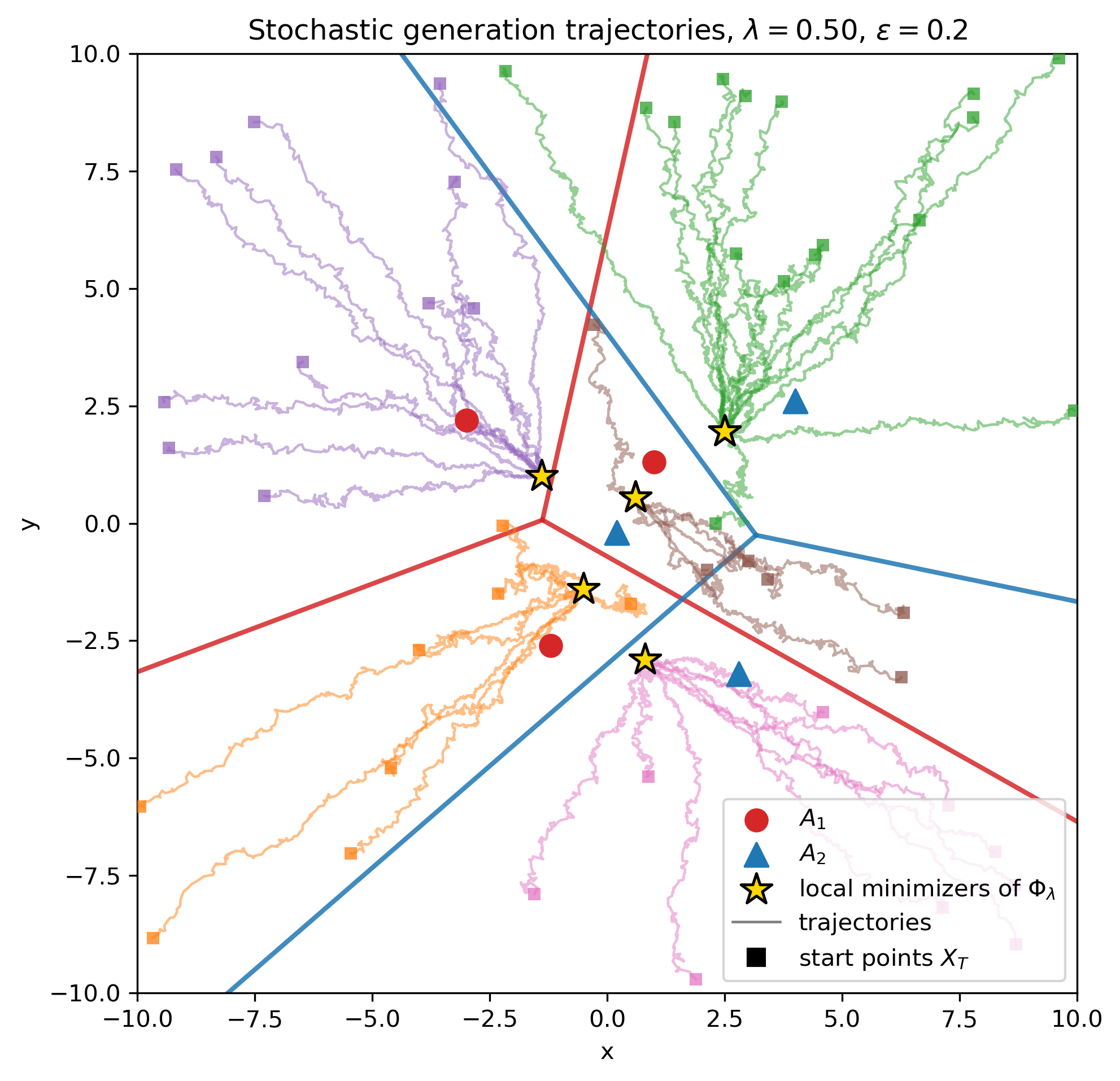}
        \caption{$\lambda =0.5,\, \varepsilon = 0.2$}
        \label{fig:sde_lam_0_5}
    \end{subfigure}\hfill
    \begin{subfigure}[t]{0.30\textwidth}
        \centering
        \includegraphics[width=\textwidth]{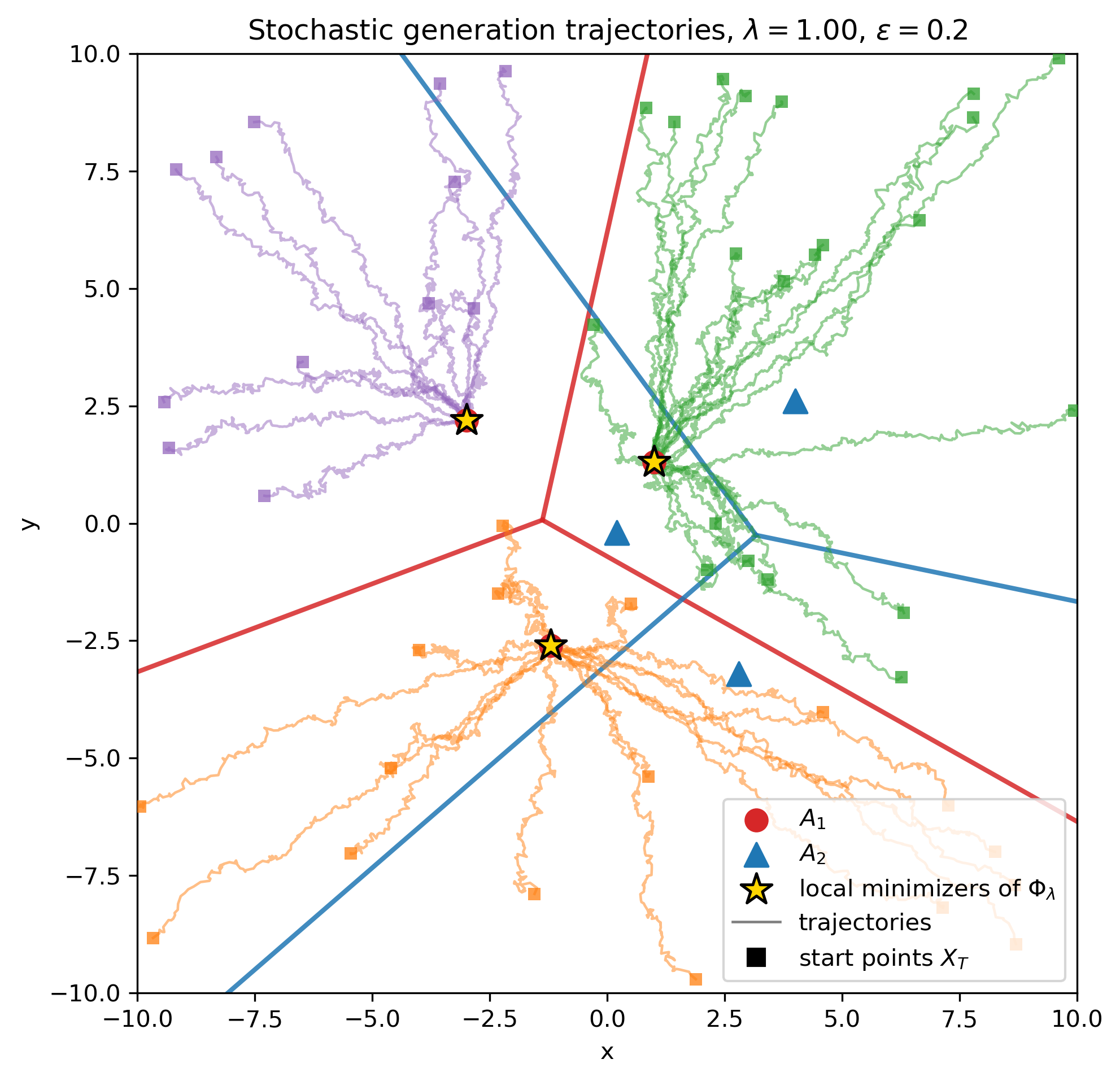}
        \caption{$\lambda =1,\, \varepsilon = 0.2$}
        \label{fig:sde_lam_1_0}
    \end{subfigure}\hfill
    \begin{subfigure}[t]{0.30\textwidth}
        \centering
        \includegraphics[width=\textwidth]{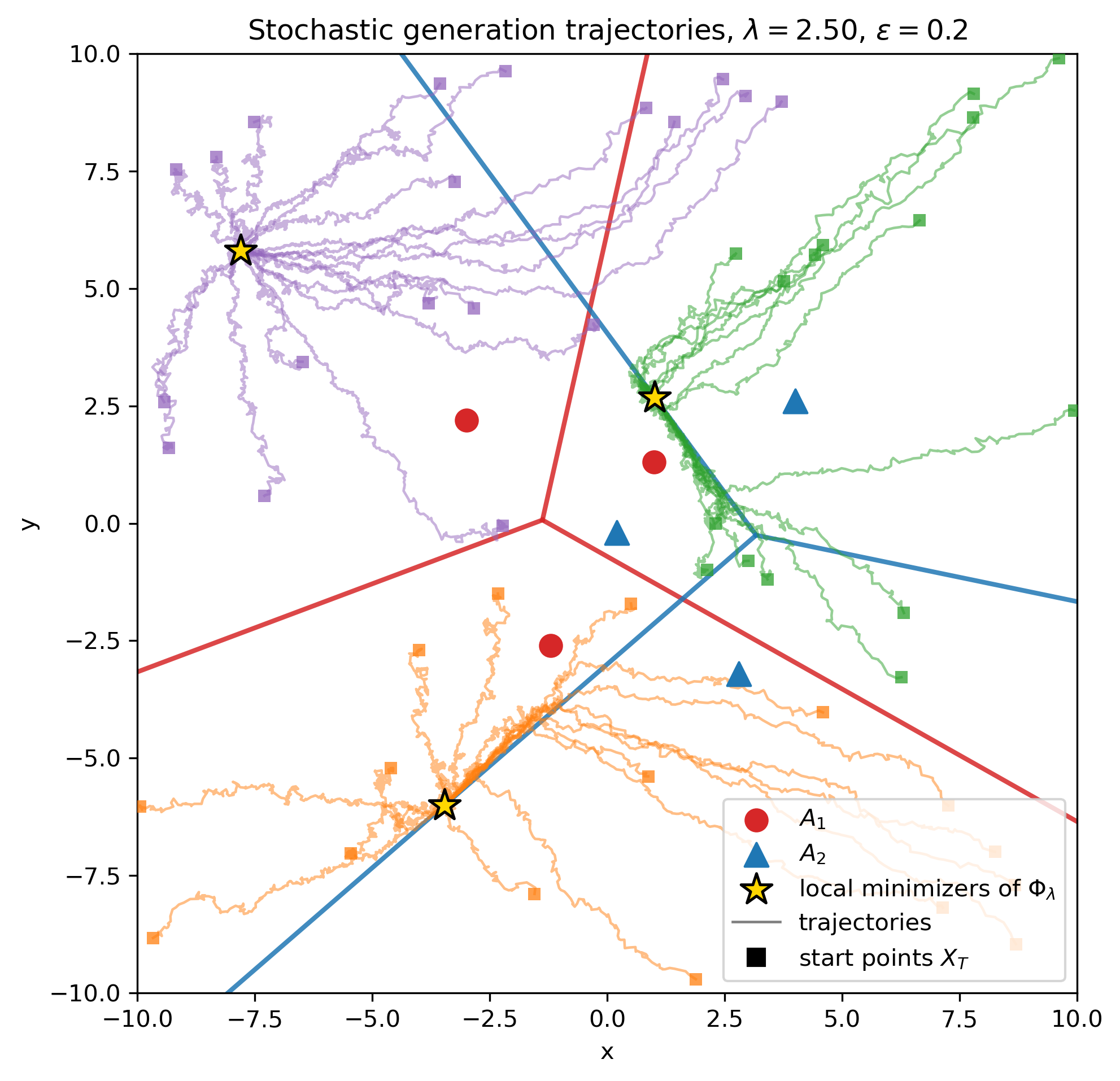}
        \caption{$\lambda =2.5,\, \varepsilon = 0.2$}
        \label{fig:sde_lam_2_5}
    \end{subfigure}

   \caption{2D case. Backward generation trajectories in \(\mathbb{R}^2\) driven by the mixed exact score associated with the two empirical measures \(u_{0,1}\) and \(u_{0,2}\). Top row: deterministic trajectories solving~\eqref{eq:generation_ode}. Bottom row: stochastic trajectories solving~\eqref{eq:generation} with \(\varepsilon=0.2\). Left (MoE, \(\lambda=0.5\)): the local minimizers of \(\Phi_\lambda\) lie in the interior of strict Voronoi cells, and the trajectories are attracted toward them. Middle (pure imitation, \(\lambda=1\)): \(\Phi_\lambda\) reduces to the squared distance to \(A_1\), so the trajectories concentrate near points of \(A_1\). Right (CFG, \(\lambda=2.5\)): one local minimizer lies in the interior of a Voronoi cell, while two others lie on Voronoi interfaces; both deterministic and stochastic trajectories concentrate near these minimizers.}
\label{fig:generation_2D_combined}
\end{figure}

\subsection{The continuous data setting}
We next consider a two-dimensional example with continuous data distributions. The first distribution is supported on horizontal segments, while the second is supported on vertical segments. More precisely, let
\[
I=[-1.5,-1.2]\cup[-0.8,0.8]\cup[1.2,1.5].
\]
We define \(u_{0,1}\) and \(u_{0,2}\) as the uniform probability measures supported on
\[
A_1 = I\times \{-1,1\},
\qquad
A_2 = \{-1,1\}\times I.
\]
Thus, \(A_1\) consists of two segmented horizontal lines, while \(A_2\) consists of two segmented vertical lines. The first two panels of Figure~\ref{fig:continuous-data} illustrate these supports and the corresponding generation trajectories driven by each dataset. Since the data are uniformly distributed along the segments, the generated points spread along them accordingly.

The third panel of Figure~\ref{fig:continuous-data} displays the generation flow driven by the MoE score (\(\lambda=0.5\)). Since the two support sets do not intersect, the geometric potential \(\Phi_\lambda\) favors points balancing proximity to both \(A_1\) and \(A_2\). This is clearly visible in the simulation: the trajectories are attracted toward the central junction region between the two supports, even though \(A_1\cap A_2=\varnothing\).

The last panel illustrates the CFG regime (\(\lambda=2\)). In this case, the trajectories are drawn toward points that remain close to \(A_1\) while being repelled from the vertical structure \(A_2\). This is consistent with the geometric interpretation of CFG as favoring the first dataset while suppressing features associated with the second one. 

\begin{figure}[htp]
  \centering
  \includegraphics[width=1\textwidth]{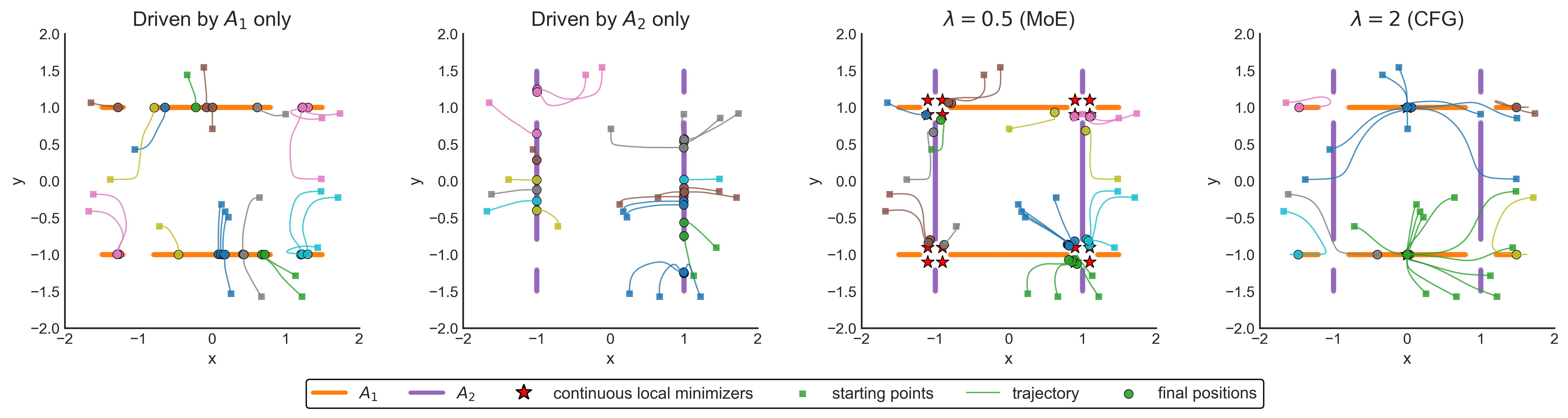}
  \caption{Backward deterministic generation trajectories driven by pure or mixed scores associated with continuous datasets supported on horizontal and vertical segmented lines.}
  \label{fig:continuous-data}
\end{figure}

\begin{rem}[Relation to practical guided generation]
\label{rem:practical_guided_generation}
Although highly idealized, the previous continuous-data example captures a mechanism that is closely related to practical guided generation. 
In real applications, high-dimensional data such as images, audio, or text embeddings are often concentrated near structured subsets of the ambient space rather than being spread uniformly.
From this perspective, the two supports \(A_1\) and \(A_2\) may be interpreted as simplified proxies for two distinct semantic sources. 

In the MoE regime \((0\le \lambda\le 1)\), the mixed score combines the two sources in a cooperative way. 
Geometrically, the generation dynamics is driven toward minimizers of the potential \(\Phi_\lambda\), which balance proximity to both supports. 
This corresponds to generating samples that remain compatible with two families of constraints at the same time.

By contrast, in the CFG regime \((\lambda>1)\), the mixing becomes extrapolative: the dynamics favors the first support while being repelled from the second one. 
This is analogous to practical guidance mechanisms in diffusion models, where one seeks samples that strongly express a target attribute while suppressing an undesired background component. 
The real-data experiment below provides a concrete illustration of this phenomenon in a high-dimensional image dataset.
\end{rem}

\subsection{CIFAR-10 Guidance via Score Mixing}\label{subsec:cifar_proof_of_concept}

We conclude the numerical section with a qualitative real-data illustration of the
geometric mechanism on CIFAR-10. The aim is not to propose a competitive image-generation method, but to test whether the phenomena predicted by the theory remain visible in a high-dimensional empirical setting: cooperative attraction in the MoE regime, discriminative guidance in the CFG regime, and loss of fidelity under excessive amplification. Accordingly, the outputs below should be interpreted as backward-flow samples generated by exact empirical heat-flow scores in pixel space, rather than as samples from a trained state-of-the-art diffusion model.

Each CIFAR-10 image is represented directly in pixel space as a vector in
\(\R^{3072}\), corresponding to the RGB values of a \(32\times 32\) image. For
each class \(c\), we consider the empirical measure
\[
u_{0,c}=\frac1{N_c}\sum_{k=1}^{N_c}\delta_{x_k^{(c)}},
\]
where \(x_k^{(c)}\in\R^{3072}\) denotes an image of class \(c\). We use the
standard CIFAR-10 training split, with \(N_c=5000\) images per class. In the
experiment below, we fix
\[
A_1=\text{airplane class},
\qquad
A_2=\text{union of all non-airplane classes},
\]
so that \(N_{\mathrm{airplane}}=5000\) and
\(N_{\mathrm{others}}=45000\). The heat-flow scores are evaluated exactly using
the Gaussian-mixture formula~\eqref{eq:score-discrete}, and the deterministic
rescaled backward dynamics is integrated with the same parameters as in the
finite-mixture experiments above, namely \(\Delta\tau=10^{-2}\) and
\(\tau_{\max}\simeq 9.2\).

The mixed score is therefore
\[
s^{(\lambda)}
=
\lambda s_{\mathrm{airplane}}+(1-\lambda)s_{\mathrm{others}},
\]
which provides a transparent class-versus-complement guidance setting. The
first score attracts the dynamics toward the target class, while the second
score represents the empirical geometry of all remaining classes. Figure
\ref{fig:cifar_real_data} shows the generated samples for
\[
\lambda\in\{0,0.5,1,2,5\}.
\]

For \(\lambda=0\), the dynamics is driven entirely by the complementary dataset
\(A_2\), and the resulting samples display generic non-airplane structures. As
\(\lambda\) increases through the MoE and moderate CFG regimes, airplane-like
features become progressively more pronounced: elongated bodies, wing-like
silhouettes, and sky-like backgrounds appear more frequently. This indicates
that the mixed score effectively transfers the dynamics toward the target
support.

The large-guidance regime exhibits a different behavior. For \(\lambda=2\), and
more clearly for \(\lambda=5\), the samples become more strongly biased toward
the airplane class, but their visual fidelity deteriorates: colors become less
natural, contrast is amplified, and several images show oversaturated or
distorted structures. This reproduces, in the present exact-score geometric
setting, the familiar over-guidance trade-off observed in classifier-free
guidance: increasing the guidance strength improves semantic alignment up to
an intermediate regime, while excessive guidance degrades realism; see, for
instance,~\cite{dhariwal2021diffusion,ho2021classifierfree,karras2024guiding,
wang2024analysis,chidambaram2024does}.

\begin{figure}[htp]
    \centering
    \includegraphics[width=\textwidth]{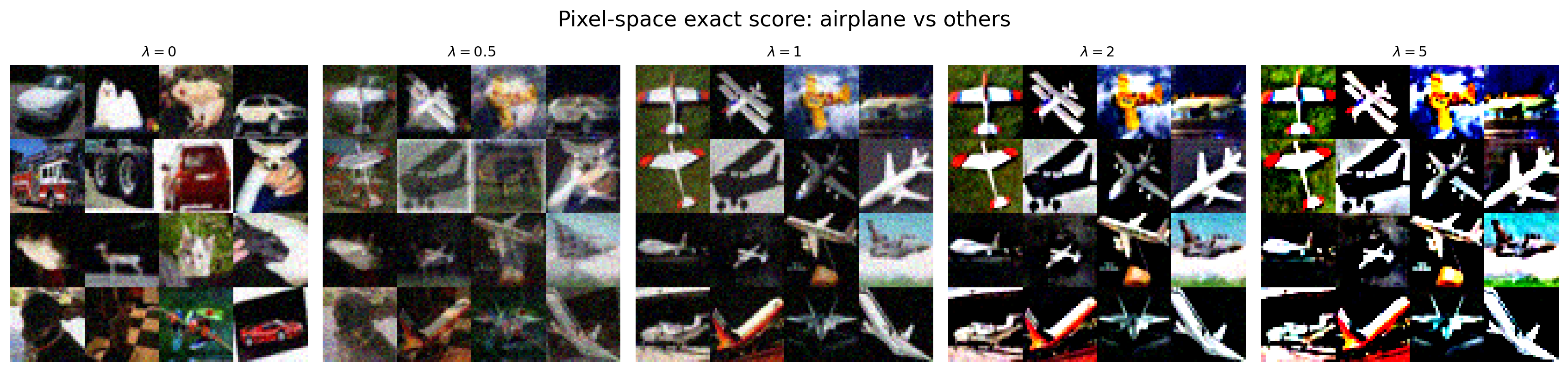}
    \caption{Real-data experiment on CIFAR-10 in pixel space. 
    Here \(A_1\) is the airplane class and \(A_2\) is the union of all non-airplane classes. 
    The mixed score is \(s^{(\lambda)}=\lambda s_{\mathrm{airplane}}+(1-\lambda)s_{\mathrm{others}}\). 
    As \(\lambda\) increases, the generated samples become more clearly aligned with the airplane class, but large values of \(\lambda\) also induce a visible loss of fidelity, illustrating the over-guidance effect.}
    \label{fig:cifar_real_data}
\end{figure}

This behavior has a direct geometric interpretation. Since the two empirical
supports are separated at the class level, the score difference
\[
s_{\mathrm{airplane}}-s_{\mathrm{others}}
\]
acts as a discriminative direction between the target class and its complement.
Moderate guidance amplifies this direction and sharpens the target-class
features. When \(\lambda\) is too large, however, the same discriminative
direction is over-amplified: the dynamics is pushed away from the balanced
high-density region of the empirical distribution, producing samples that are
more class-specific but less faithful.

Thus, even in this simplified exact-score experiment, the CIFAR-10 simulations
support the central geometric message of the paper. Score mixing acts as a
guidance mechanism determined by the relative position of the underlying
supports. In the disjoint class-versus-complement setting, increasing
\(\lambda\) enhances target-class discrimination, but beyond an intermediate
range the extrapolative nature of CFG produces a visible loss of fidelity.

\section{Laplace--Varadhan principle and gradient structure}
\label{sec:laplace_varadhan_structure}

This section develops the small-time asymptotic structure of the rescaled
mixed potential \(F_\lambda(\cdot,t)\) and proves
Theorem~\ref{thm:timeshift_empirical}.

The analysis relies on Laplace--Varadhan asymptotics for Gaussian convolutions,
which identify the limiting distance potential; see, for instance,
Dembo--Zeitouni~\cite[Sec.~4.3]{DemboZeitouni1998},
Bender--Orszag~\cite[Sec.~6.4]{bender2013advanced}, and
Fukunaga--Hostetler~\cite{fukunaga1975estimation}. We first illustrate this
mechanism in the elementary case of a single empirical source, where the role of
Voronoi interfaces is already transparent. We then return to the general mixed
setting and establish quantitative value and gradient estimates for
\(F_\lambda\). These estimates are subsequently used to prove
Theorem~\ref{thm:timeshift_empirical}, by compactness of time shifts and
identification of weak-\(*\) limits of the rescaled drifts.

\subsection{Laplace--Varadhan principle for a single empirical source}
\label{subsec:laplace_heuristic}

We start with the elementary case of a single empirical source. Although simple, it already contains the essential geometric mechanism of the small-time limit: after heat regularization, the leading-order behavior is determined only by the distance to the support of the initial measure, while the weights contribute only at lower order.

Let
\[
u_0=\sum_{k=1}^n w_k\,\delta_{x_k},
\qquad
w_k>0,
\qquad
\sum_{k=1}^n w_k=1,
\]
and denote its support by
\[
A=\mathrm{supp}(u_0)=\{x_1,\dots,x_n\}.
\]
We write
\[
d(x)\coloneqq \mathrm{dist}(x,A)=\min_{1\le k\le n}\|x-x_k\|,
\qquad
I(x)\coloneqq \argmin_{1\le k\le n}\|x-x_k\|.
\]

Using the explicit Gaussian-mixture formula~\eqref{eq:solution-discrete}, we factor out the dominant exponential scale:
\[
u(x,t)
=
(4\pi t)^{-d/2}
\exp \Bigl(-\frac{d(x)^2}{4t}\Bigr)
\sum_{k=1}^n
w_k
\exp \Bigl(-\frac{\|x-x_k\|^2-d(x)^2}{4t}\Bigr).
\]
The renormalized sum satisfies
\[
0<
\sum_{k\in I(x)}w_k
\le
\sum_{k=1}^n
w_k
\exp \Bigl(-\frac{\|x-x_k\|^2-d(x)^2}{4t}\Bigr)
\le
\sum_{k=1}^n w_k=1,
\]
and therefore its logarithm is \(O(1)\) as \(t\to0^+\). It follows that
\begin{equation}\label{eq:heuristic_varadhan_single}
-4t\log u(x,t)\longrightarrow d(x)^2
\qquad\text{as }t\to0^+.
\end{equation}
Thus, to leading order, the heat flow forgets the weights and retains only the squared distance to the support. This is the basic Laplace--Varadhan principle in the present setting; see \cite[Sec.~6.4]{bender2013advanced} and \cite[Sec.~4.3]{DemboZeitouni1998}.

To understand the limiting gradient, we use the explicit score formula~\eqref{eq:score-discrete}:
\[
\nabla\bigl(-4t\log u(x,t)\bigr)
=
2\left(x-\sum_{k=1}^n p_k(x,t)\,x_k\right),
\qquad
p_k(x,t)\coloneqq 
\frac{w_k G_t(x-x_k)}{\sum_{j=1}^n w_j G_t(x-x_j)}.
\]
For each fixed \(x\in\R^d\), the coefficients \(p_k(x,t)\) satisfy
\[
p_k(x,t)\longrightarrow
\begin{cases}
\dfrac{w_k}{\sum_{j\in I(x)}w_j}, & k\in I(x),\\[2mm]
0, & k\notin I(x),
\end{cases}
\qquad\text{as }t\to0^+.
\]
Indeed, after factoring out the common dominant exponential \(e^{-d(x)^2/(4t)}\), the terms corresponding to \(k\in I(x)\) remain of order one, whereas the others decay exponentially fast.

This leads to two regimes.
\begin{itemize}
    \item If \(I(x)=\{k^*\}\) is a singleton, then
    \[
    \nabla\bigl(-4t\log u(x,t)\bigr)\to 2(x-x_{k^*})=\nabla(d(x)^2),
    \qquad\text{as }t\to0^+.
    \]
    Thus, away from the Voronoi interfaces, both the rescaled potential and its gradient converge to the classical squared-distance landscape.

    \item If \(|I(x)|\ge 2\), then several support points are tied at the same minimal distance, and
    \[
    \nabla\bigl(-4t\log u(x,t)\bigr)\to
    2\sum_{k\in I(x)}
    \frac{w_k}{\sum_{j\in I(x)}w_j}\,(x-x_k)
    \in \partial^C(d^2)(x).
    \]
    Thus, on the interface, the limiting gradient is no longer unique; it belongs to the Clarke subdifferential of the squared-distance function.
\end{itemize}

This single-source picture already contains the essential geometry of the limit: the rescaled heat potential converges to a squared-distance function, and Voronoi interfaces are the unique source of nonsmoothness.

\subsection{Laplace--Varadhan principle for the mixed setting}
\label{subsec:grad_rescaled_potential}

We now return to the mixed setting and allow general compactly supported initial measures. For \(i\in\{1,2\}\), let \(u_{0,i}\) be a probability measure on \(\R^d\) with compact support
\[
A_i=\mathrm{supp}(u_{0,i}),
\qquad
d_i(x)=\mathrm{dist}(x,A_i), \qquad \Pi_{i}(x) \coloneqq  \argmin_{a\in A_i}\|x-a\|
\]
Throughout this subsection we assume the following quantitative lower-mass condition.

\begin{ass}[Uniform lower mass bound (power law)]\label{ass:uniform_small_ball_mass}
There exist constants \(c>0\), \(\alpha\ge 0\), and \(r_0>0\) such that, for each \(i\in\{1,2\}\),
\begin{equation}\label{eq:lower_mass_power_global}
\inf_{x\in A_i} u_{0,i}\bigl(B(x,r)\bigr)\ge c\,r^\alpha
\qquad
\forall r\in(0,r_0].
\end{equation}
\end{ass}

\begin{rem}
Assumption~\ref{ass:uniform_small_ball_mass} is the key quantitative input allowing one to extend the Laplace--Varadhan argument beyond the empirical setting. Its role is to provide a uniform lower bound on the mass of small balls centered on the support,
which is precisely what is needed to prove the convergence of the rescaled potential \(F_\lambda\) toward the geometric potential \(\Phi_\lambda\).

Geometrically, this assumption means that the measure is supported on a set of effective dimension \(\alpha\) and has a uniformly positive density at that scale. This includes, in particular, finite Dirac measures (\(\alpha=0\)), absolutely continuous measures on full-dimensional supports (\(\alpha=d\)), and more generally measures concentrated on a finite union of lower-dimensional manifolds carrying a uniform intrinsic density bound. The last example is especially natural in applications to imaging and generative modeling, where one often assumes that data are concentrated near a low-dimensional latent manifold embedded in a high-dimensional ambient space.
\end{rem}

\paragraph{Gaussian mean-shift formula.}
We recall that
\[
u_i(\cdot,t)=G_t*u_{0,i},
\qquad i=1,2,
\]
where
\[
G_t(z)=(4\pi t)^{-d/2}\exp\Bigl(-\frac{\|z\|^2}{4t}\Bigr)
\]
is the heat kernel. Differentiating under the integral sign gives
\[
\nabla u_i(x,t)
=
\int_{A_i}\nabla_x G_t(x-y)\,d u_{0,i}(y)
=
-\frac1{2t}\int_{A_i}(x-y)\,G_t(x-y)\,d u_{0,i}(y),
\]
and therefore
\begin{equation}\label{eq:Gaussian-mean-shift}
\nabla\log u_i(x,t)
=
\frac{\nabla u_i(x,t)}{u_i(x,t)}
=
\frac1{2t}\bigl(m_i(x,t)-x\bigr),
\end{equation}
where
\begin{equation}\label{eq:m_i}
m_i(x,t)\coloneqq 
\int_{A_i} y\,d\nu_{t,i}^x(y),
\qquad
\nu_{t,i}^x(dy)\coloneqq 
\frac{G_t(x-y)}{u_i(x,t)}\,d u_{0,i}(y).
\end{equation}
Since \(\nu_{t,i}^x\) is a probability measure supported on \(A_i\), one has
\[
m_i(x,t)\in \mathrm{conv}(A_i)
\qquad \forall (x,t)\in \R^d\times(0,T].
\]

Substituting~\eqref{eq:Gaussian-mean-shift} into the definition of the rescaled potential
\[
F_\lambda(x,t)=-4t\,\bigl(\lambda\log u_1(x,t)+(1-\lambda)\log u_2(x,t)\bigr),
\]
we obtain
\begin{equation}\label{eq:nabla_F_again}
\nabla F_\lambda(x,t)
=
2\Bigl(x-\lambda m_1(x,t)-(1-\lambda)m_2(x,t)\Bigr).
\end{equation}
This is exactly the drift field driving the similarity-time dynamics~\eqref{eq:generation_ode_Y}.

\paragraph{Laplace--Varadhan-type estimates.}
Formula~\eqref{eq:nabla_F_again} shows that the asymptotic behavior of \(\nabla F_\lambda\) is encoded in the concentration of the barycenters \(m_i(x,t)\) toward nearest points of the supports \(A_i\). We now state three lemmas quantifying this principle; their proofs are given in the next subsection.

\begin{lem}[Quantitative value convergence]
\label{lem:bounds_u_and_V_Phi}
Let Assumption~\ref{ass:uniform_small_ball_mass} hold. Then, for each \(i\in\{1,2\}\) and all \((x,t)\) with \(0<t\le r_0^2\), one has
\begin{align}
u_i(x,t)
&\le (4\pi t)^{-d/2}\exp \Bigl(-\frac{d_i(x)^2}{4t}\Bigr), \label{eq:u_upper_clean}\\[1mm]
u_i(x,t)
&\ge c\,t^{\alpha/2}(4\pi t)^{-d/2}
\exp \Bigl(-\frac{(d_i(x)+\sqrt t)^2}{4t}\Bigr). \label{eq:u_lower_clean}
\end{align}
Consequently, for all \(x\in\R^d\) and \(0<t\le r_0^2\),
\begin{equation}\label{eq:local_uniform_V_clean}
\bigl|\Phi_\lambda(x)-F_\lambda(x,t)\bigr|
\le C_1(x)\sqrt t + C_2\,t(1+|\log t|),
\end{equation}
where
\[
C_1(x)\coloneqq 2|\lambda|\,d_1(x)+2|1-\lambda|\,d_2(x),
\]
and
\[
C_2
=
\max\bigl\{1+4|\log c|+2d\log(4\pi),\,2\alpha+2d\bigr\}\,(|\lambda|+|1-\lambda|).
\]
\end{lem}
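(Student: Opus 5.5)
The plan is to prove the two pointwise bounds on \(u_i\) directly from the convolution formula \(u_i(x,t)=\int_{A_i}G_t(x-y)\,du_{0,i}(y)\), and then take logarithms and combine them with the weights \(\lambda\) and \(1-\lambda\).

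\textbf{Upper bound \eqref{eq:u_upper_clean}.} For \(y\in A_i\) we have \(\|x-y\|\ge d_i(x)\). Hence \(G_t(x-y)\le (4\pi t)^{-d/2}e^{-d_i(x)^2/(4t)}\) on the support. Integrating against the probability measure \(u_{0,i}\) gives the bound.

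\textbf{Lower bound \eqref{eq:u_lower_clean}.} Since \(A_i\) is compact, we can pick a nearest point \(a\in\Pi_i(x)\). Set \(r=\sqrt t\), which satisfies \(r\le r_0\) because \(t\le r_0^2\). For \(y\in B(a,r)\) the triangle inequality gives \(\|x-y\|\le d_i(x)+\sqrt t\). We then restrict the integral to \(B(a,r)\) and use Assumption~\ref{ass:uniform_small_ball_mass}, \(u_{0,i}(B(a,r))\ge c\,r^\alpha=c\,t^{\alpha/2}\). This yields
\[
u_i(x,t)\ge c\,t^{\alpha/2}(4\pi t)^{-d/2}\exp\bigl(-(d_i(x)+\sqrt t)^2/(4t)\bigr).
\]

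\textbf{Log bounds for each \(i\).} Multiplying the logarithms by \(-4t\), the upper bound gives
\[
-4t\log u_i\ge d_i^2-2dt\log(4\pi t).
\]
The lower bound gives
\[
-4t\log u_i\le (d_i+\sqrt t)^2-4t\log c-2\alpha t\log t+2dt\log(4\pi t).
\]
Since \((d_i+\sqrt t)^2=d_i^2+2d_i\sqrt t+t\), we obtain the two-sided estimate
\[
\bigl|-4t\log u_i(x,t)-d_i(x)^2\bigr|\le 2d_i(x)\sqrt t+t\bigl(1+4|\log c|+2d\log(4\pi)+(2\alpha+2d)|\log t|\bigr).
\]
Here both \(|2dt\log(4\pi t)|\le 2dt(\log(4\pi)+|\log t|)\) and the \(\alpha\)-term are absorbed into the bracket. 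The bracket is at most
\[
\max\{1+4|\log c|+2d\log(4\pi),\,2\alpha+2d\}\,(1+|\log t|).
\]
Some care is needed because \(\log(4\pi t)\) may change sign and \(|\log c|\) may be needed when \(c<1\). That is why the estimate is written with absolute values throughout, and why the constant is taken as a max of two groups multiplied by \((1+|\log t|)\).

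\textbf{Combining.} Write
\[
F_\lambda-\Phi_\lambda=\lambda\bigl(-4t\log u_1-d_1^2\bigr)+(1-\lambda)\bigl(-4t\log u_2-d_2^2\bigr).
\]
Applying the triangle inequality with the weights \(|\lambda|\) and \(|1-\lambda|\) gives \eqref{eq:local_uniform_V_clean} with exactly \(C_1(x)\) and \(C_2\) as stated. This route handles both sign cases, \(\lambda\le1\) and \(\lambda>1\), at once, because each log term is controlled two-sidedly.

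The only mildly delicate step is this bookkeeping of signs in the logarithmic constants. The analytic content lies entirely in the ball-restriction step of the lower bound, where the choice \(r=\sqrt t\) balances the mass loss \(t^{\alpha/2}\) against the distance shift \(\sqrt t\).
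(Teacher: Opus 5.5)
Your proposal is correct and follows the paper's proof essentially step for step: the sup bound on the kernel over $A_i$, restriction to $B(a,\sqrt t)$ around a nearest point combined with the small-ball assumption, and then two-sided logarithmic bounds combined with the weights $|\lambda|$ and $|1-\lambda|$; your derivation of the explicit constant $C_2$ is more detailed than the paper's. One harmless slip is that the upper bound actually gives $-4t\log u_i\ge d_i^2+2dt\log(4\pi t)$ rather than $d_i^2-2dt\log(4\pi t)$, which does not affect the result because you immediately bound that term in absolute value.
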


\begin{lem}[Qualitative gradient convergence toward the outer Clarke]
\label{lem:Varadhan-grad}
Let Assumption~\ref{ass:uniform_small_ball_mass} hold. Fix \(x^*\in\R^d\). Then, for every \(\varepsilon>0\), there exists \(\delta>0\) such that
\[
\|x-x^*\|+t\le \delta
\qquad\Longrightarrow\qquad
\mathrm{dist}\bigl(\nabla F_\lambda(x,t),\,\widehat{\partial}\,\Phi_\lambda(x^*)\bigr)\le \varepsilon,
\]
where \(\widehat{\partial}\,\Phi_\lambda\) is the outer Clarke subdifferential defined in~\eqref{eq:outer_clarke_Phi}.
\end{lem}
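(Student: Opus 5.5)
Using the mean-shift representation \eqref{eq:nabla_F_again}, I would write
\[
\nabla F_\lambda(x,t)=\lambda\cdot 2\bigl(x-m_1(x,t)\bigr)+(1-\lambda)\cdot 2\bigl(x-m_2(x,t)\bigr).
\]
Since \(\widehat{\partial}\Phi_\lambda(x^*)=\lambda\,\partial^C(d_1^2)(x^*)+(1-\lambda)\,\partial^C(d_2^2)(x^*)\) is a Minkowski sum, it suffices to treat each source separately. The claim reduces to the following single-source statement: for each \(i\) and each \(\eta>0\) there is \(\delta>0\) such that \(\|x-x^*\|+t\le\delta\) implies
\[
\mathrm{dist}\bigl(2(x-m_i(x,t)),\partial^C(d_i^2)(x^*)\bigr)\le\eta .
\]
The triangle inequality with \(\eta=\varepsilon/(|\lambda|+|1-\lambda|)\) then concludes. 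For compact \(A_i\) I would use the standard description
\[
\partial^C(d_i^2)(x^*)=2\bigl(x^*-\mathrm{conv}\,\Pi_i(x^*)\bigr),
\]
obtained from \eqref{eq:Clarke_seq_def}: at differentiability points \(\nabla d_i^2(x_k)=2(x_k-\pi(x_k))\), and nearest points accumulate in \(\Pi_i(x^*)\) by compactness and continuity of \(d_i\). Since \(\|x-x^*\|\le\delta\), it is enough to show that \(m_i(x,t)\) lies within \(\eta/4\) of \(\mathrm{conv}\,\Pi_i(x^*)\).

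\textbf{Localization of the Gibbs measure.} Fix \(\rho>0\). By compactness of \(A_i\) and continuity, there is \(\gamma>0\) such that every \(y\in A_i\) with \(\mathrm{dist}(y,\Pi_i(x^*))\ge\rho\) satisfies \(\|x^*-y\|^2\ge d_i(x^*)^2+2\gamma\). Shrinking \(\delta\) and using the Lipschitz dependence of \(\|x-y\|^2\) on \(x\) over the bounded set \(A_i\), we get \(\|x-y\|^2\ge d_i(x)^2+\gamma\) for such \(y\) whenever \(\|x-x^*\|\le\delta\). Then the numerator of \(\nu^x_{t,i}\) on this far set is at most \((4\pi t)^{-d/2}e^{-(d_i(x)^2+\gamma)/(4t)}\). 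The denominator \(u_i(x,t)\) is bounded below by \eqref{eq:u_lower_clean} of Lemma~\ref{lem:bounds_u_and_V_Phi}. This gives
\[
\nu^x_{t,i}\bigl(\{y:\mathrm{dist}(y,\Pi_i(x^*))\ge\rho\}\bigr)\le c^{-1}t^{-\alpha/2}\exp\Bigl(\frac{2d_i(x)\sqrt t+t-\gamma}{4t}\Bigr),
\]
which tends to \(0\) uniformly for \(x\) near \(x^*\) as \(t\to0^+\), since \(d_i\) is bounded there and \(\gamma>0\) is fixed.

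\textbf{Concluding, and the main obstacle.} Split \(m_i=\int y\,d\nu^x_{t,i}\) into the contribution from the \(\rho\)-neighbourhood \(N_\rho\) of \(\Pi_i(x^*)\) and from its complement. The second part is bounded by \(\mathrm{diam}(\mathrm{conv}A_i)\) times the small mass above. The first part, renormalized, lies in \(\mathrm{conv}(N_\rho\cap A_i)\subset\mathrm{conv}\,\Pi_i(x^*)+B(0,\rho)\). Hence \(\mathrm{dist}(m_i,\mathrm{conv}\,\Pi_i(x^*))\le\rho+C\,\mathrm{mass}\), and choosing \(\rho=\eta/8\) and then \(\delta\) small finishes the proof. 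The step I expect to be delicate is the uniform gap \(\gamma\): it has to hold jointly over \(x\) near \(x^*\), and it must be compatible with the lower bound on \(u_i\) coming from Assumption~\ref{ass:uniform_small_ball_mass}. The point is that the \(\sqrt t\) loss and the \(t^{-\alpha/2}\) polynomial factor must be dominated by \(e^{-\gamma/(4t)}\), and this holds only because \(\gamma\) is independent of \(t\).
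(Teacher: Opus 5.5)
Your proof is correct and uses the same mechanism as the paper. Both arguments reduce via the mean-shift formula to the barycenters \(m_i(x,t)\). Both use a uniform quadratic gap on the part of \(A_i\) away from \(\Pi_i(x^*)\). Both use the lower bound \eqref{eq:u_lower_clean}, so that \(e^{-\gamma/(4t)}\) dominates the loss \(t^{-\alpha/2}e^{d_i(x)/(2\sqrt t)}\) and the far mass of \(\nu^x_{t,i}\) vanishes.

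The difference is in the final step. The paper argues by contradiction: it extracts \(x_n\to x^*\), \(t_n\to0^+\) with weakly convergent Gibbs measures, and uses cutoff functions to show the limit measure is supported in \(\Pi_i(x^*)\). You instead split the barycenter integral directly. The near part lies in \(\mathrm{conv}\,\Pi_i(x^*)+\overline B(0,\rho)\), and the far part contributes at most \(\mathrm{diam}(A_i)\) times the far mass. Your version is more elementary and effective: \(\delta\) comes out explicitly from \(\rho\), the gap \(\gamma\), and the constants of Assumption~\ref{ass:uniform_small_ball_mass}, with no subsequence extraction or weak compactness of measures. The paper's packaging instead records the slightly more structural fact that every limit of \(\nu^x_{t,i}\) concentrates on the nearest-point set.

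One small point concerns the formula \(\partial^C(d_i^2)(x^*)=2\bigl(x^*-\mathrm{conv}\,\Pi_i(x^*)\bigr)\). Your justification via limits of gradients gives only the inclusion \(\subseteq\), whereas your estimate needs \(\supseteq\), since you bound the distance to \(\partial^C(d_i^2)(x^*)\) by the distance to the right-hand side. The missing direction is easy.
\begin{itemize}
\item If \(x^*\notin A_i\) and \(a\in\Pi_i(x^*)\), every point \(z\) of the open segment between \(a\) and \(x^*\) satisfies \(\Pi_i(z)=\{a\}\). Hence \(d_i^2\) is differentiable at \(z\) with \(\nabla d_i^2(z)=2(z-a)\to2(x^*-a)\).
\item If \(x^*\in A_i\), both sides equal \(\{0\}\).
\end{itemize}
The paper simply quotes the formula as standard.
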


\begin{lem}[Local uniform quantitative gradient convergence outside the active interface]
\label{lem:quantitative_convergence_smooth_field}
Assume that
\[
u_{0,1}=\sum_{k=1}^{n_1} w_{1,k}\,\delta_{x_k},
\qquad
u_{0,2}=\sum_{\ell=1}^{n_2} w_{2,\ell}\,\delta_{y_\ell},
\qquad
A_1=\{x_1,\ldots,x_{n_1}\},
\qquad
A_2=\{y_1,\ldots,y_{n_2}\},
\]
with \(w_{i,k}>0\) and \(\sum_k w_{i,k}=1\).
Assume that
\[
x_0\notin \mathrm{ND}(A_1,A_2)
=
\mathrm{ND}(d_1^2)\cup \mathrm{ND}(d_2^2).
\]
Then there exist a neighborhood \(U\) of \(x_0\) and constants \(C,\eta>0\)
such that
\begin{equation}\label{eq:quantitative_empirical_smooth_drift}
\|\nabla F_\lambda(x,t)-\nabla\Phi_\lambda(x)\|
\le C e^{-\eta/t}
\qquad \forall x\in U,\ \forall t>0.
\end{equation}

In the endpoint cases, the assumption may be weakened to the active interface:
if \(\lambda=0\), it is enough to assume \(x_0\notin\mathrm{ND}(d_2^2)\);
if \(\lambda=1\), it is enough to assume \(x_0\notin\mathrm{ND}(d_1^2)\).
\end{lem}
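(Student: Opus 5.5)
The plan is to work directly with the explicit mean-shift representation \eqref{eq:nabla_F_again}, which in the empirical case reads
\[
\nabla F_\lambda(x,t)=2\bigl(x-\lambda m_1(x,t)-(1-\lambda)m_2(x,t)\bigr),
\qquad
m_1(x,t)=\sum_{k=1}^{n_1}p_{1,k}(x,t)\,x_k ,
\]
with softmax weights \(p_{1,k}\) as in \eqref{eq:score-discrete}, and analogously for \(m_2\). Since \(x_0\notin \mathrm{ND}(A_1,A_2)\), the nearest-point sets \(\Pi_1(x_0)=\{x_{k^*}\}\) and \(\Pi_2(x_0)=\{y_{\ell^*}\}\) are singletons. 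Indeed, in the finite case \(\partial^C(d_i^2)(x)=\mathrm{conv}\{2(x-a):a\in\Pi_i(x)\}\), so non-uniqueness of the nearest point forces \(x\in\mathrm{ND}(d_i^2)\). By finiteness there is then a gap
\[
\gamma_1\coloneqq\min_{k\ne k^*}\|x_0-x_k\|^2-\|x_0-x_{k^*}\|^2>0,
\]
and similarly \(\gamma_2>0\).

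By continuity of the finitely many functions \(x\mapsto\|x-x_k\|^2\), I will choose a ball \(U=B(x_0,r)\) on which the gap persists with half its size: \(\|x-x_k\|^2-\|x-x_{k^*}\|^2\ge\gamma_1/2\) for all \(k\ne k^*\) and \(x\in U\), and likewise for \(A_2\). On \(U\) we then have \(\nabla\Phi_\lambda(x)=2\bigl(x-\lambda x_{k^*}-(1-\lambda)y_{\ell^*}\bigr)\), so that
\[
\nabla F_\lambda-\nabla\Phi_\lambda=-2\lambda\,(m_1-x_{k^*})-2(1-\lambda)(m_2-y_{\ell^*}).
\]
Next I write \(m_1-x_{k^*}=\sum_{k\ne k^*}p_{1,k}(x_k-x_{k^*})\), and bound each weight for \(k\ne k^*\) by dividing numerator and denominator by \(w_{1,k^*}G_t(x-x_{k^*})\):
\[
p_{1,k}\le \frac{w_{1,k}}{w_{1,k^*}}\exp\Bigl(-\frac{\|x-x_k\|^2-\|x-x_{k^*}\|^2}{4t}\Bigr)\le \frac{w_{1,k}}{w_{1,k^*}}\,e^{-\gamma_1/(8t)}.
\]
This yields \(\|m_1-x_{k^*}\|\le \mathrm{diam}(A_1)\,w_{1,k^*}^{-1}e^{-\gamma_1/(8t)}\) uniformly on \(U\) and for all \(t>0\). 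Combining with the analogous bound for \(m_2\) gives \eqref{eq:quantitative_empirical_smooth_drift} with \(\eta=\min(\gamma_1,\gamma_2)/8\) and \(C=2|\lambda|\mathrm{diam}(A_1)/w_{1,k^*}+2|1-\lambda|\mathrm{diam}(A_2)/w_{2,\ell^*}\). No restriction to small \(t\) is needed, since the bound is trivially true for all \(t\).

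For the endpoint cases the same computation applies, except that the coefficient of the inactive mean-shift vanishes. For \(\lambda=0\) only \(m_2\) appears, so only \(\gamma_2>0\), i.e. \(x_0\notin\mathrm{ND}(d_2^2)\), is needed; on \(U\) the function \(\Phi_0=d_2^2\) is smooth. The case \(\lambda=1\) is symmetric.

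The only step requiring care is the identification of \(\mathrm{ND}(d_i^2)\) with the set of points having non-unique nearest points in \(A_i\), together with the uniformity of the gap on \(U\); both are elementary for finite sets. Everything else is explicit.
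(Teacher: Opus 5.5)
Your proposal is correct and follows essentially the same route as the paper's proof: uniqueness of the nearest points at \(x_0\), a gap that persists uniformly on a neighborhood, division by the dominant Gaussian term in the mean-shift barycenters \(m_i\) to get exponentially small weights, and omission of the inactive support in the endpoint cases. The only difference is that you make the constants explicit, namely \(\eta=\min(\gamma_1,\gamma_2)/8\) and \(C\) expressed through \(\mathrm{diam}(A_i)\) and \(w_{i,k^\ast}\), whereas the paper leaves them implicit.
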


\subsection{Proofs of the technical lemmas}
\label{sec:lemma_proof}

\begin{proof}[Proof of Lemma~\ref{lem:bounds_u_and_V_Phi}]
We prove the estimates for a fixed \(i\in\{1,2\}\).

\medskip
\noindent
\textbf{Step 1: Upper bound.}
Since \(u_i(x,t)=\int_{A_i}G_t(x-y)\,d u_{0,i}(y)\) and \(u_{0,i}\) is a probability measure,
\[
u_i(x,t)\le \sup_{y\in A_i}G_t(x-y).
\]
Because \(\min_{y\in A_i}\|x-y\|=d_i(x)\) and \(r\mapsto e^{-r^2/(4t)}\) is decreasing,
\[
\sup_{y\in A_i}G_t(x-y)
=
(4\pi t)^{-d/2}\exp\Bigl(-\frac{d_i(x)^2}{4t}\Bigr),
\]
which gives \eqref{eq:u_upper_clean}.

\medskip
\noindent
\textbf{Step 2: Lower bound.}
Fix \(x\in\R^d\), and choose \(y_x\in A_i\) such that \(\|x-y_x\|=d_i(x)\), which is possible since \(A_i\) is compact. For every \(y\in B(y_x,\sqrt t)\),
\[
\|x-y\|\le \|x-y_x\|+\|y-y_x\|\le d_i(x)+\sqrt t,
\]
and therefore
\[
G_t(x-y)\ge (4\pi t)^{-d/2}\exp\Bigl(-\frac{(d_i(x)+\sqrt t)^2}{4t}\Bigr).
\]
Hence
\[
u_i(x,t)
\ge
\int_{B(y_x,\sqrt t)}G_t(x-y)\,d u_{0,i}(y)
\ge
(4\pi t)^{-d/2}\exp\Bigl(-\frac{(d_i(x)+\sqrt t)^2}{4t}\Bigr)
\,u_{0,i}\bigl(B(y_x,\sqrt t)\bigr).
\]
Since \(t\le r_0^2\), we have \(\sqrt t\le r_0\), so Assumption~\ref{ass:uniform_small_ball_mass} yields
\[
u_{0,i}\bigl(B(y_x,\sqrt t)\bigr)\ge c\,(\sqrt t)^\alpha=c\,t^{\alpha/2}.
\]
This proves \eqref{eq:u_lower_clean}.

\medskip
\noindent
\textbf{Step 3: Estimate for \(F_\lambda-\Phi_\lambda\).}
From \eqref{eq:u_upper_clean},
\[
\log u_i(x,t)\le -\frac d2\log(4\pi t)-\frac{d_i(x)^2}{4t},
\]
hence
\[
4t\log u_i(x,t)+d_i(x)^2\le -2dt\log(4\pi t).
\]
From \eqref{eq:u_lower_clean},
\[
\log u_i(x,t)\ge \log c+\frac{\alpha}{2}\log t-\frac d2\log(4\pi t)-\frac{(d_i(x)+\sqrt t)^2}{4t},
\]
so
\[
4t\log u_i(x,t)+d_i(x)^2
\ge
-\bigl((d_i(x)+\sqrt t)^2-d_i(x)^2\bigr)
+4t\log c+2\alpha t\log t-2dt\log(4\pi t).
\]
Since
\[
(d_i(x)+\sqrt t)^2-d_i(x)^2=2d_i(x)\sqrt t+t,
\]
we obtain
\[
\bigl|4t\log u_i(x,t)+d_i(x)^2\bigr|
\le
2d_i(x)\sqrt t
+
C\,t(1+|\log t|),
\]
for a constant \(C\) depending only on \(c,\alpha,d\). Finally, using
\[
\Phi_\lambda(x)-F_\lambda(x,t)
=
\lambda\bigl(d_1(x)^2+4t\log u_1(x,t)\bigr)
+
(1-\lambda)\bigl(d_2(x)^2+4t\log u_2(x,t)\bigr),
\]
we deduce \eqref{eq:local_uniform_V_clean}.
\end{proof}

\begin{proof}[Proof of Lemma~\ref{lem:Varadhan-grad}]
We divide the proof into three steps.

\medskip
\noindent
\textbf{Step 1: Reduction to barycenter concentration.}
Fix \(x^\ast\in\mathbb R^d\). For \(i=1,2\), recall that
\[
\partial^C(d_i^2)(x^\ast)
=
2\bigl(x^\ast-\operatorname{conv}(\Pi_i(x^\ast))\bigr).
\]
Recall mean-shift formula~\eqref{eq:Gaussian-mean-shift}-\eqref{eq:nabla_F_again},
\[
\nabla F_\lambda(x,t)
=
2\bigl(x-\lambda m_1(x,t)-(1-\lambda)m_2(x,t)\bigr).
\]
Hence
\[
\begin{aligned}
\operatorname{dist}\bigl(
\nabla F_\lambda(x,t),\widehat{\partial}\Phi_\lambda(x^\ast)
\bigr)
\le\;&
2\|x-x^\ast\| \\
&+2|\lambda|\,
\operatorname{dist}\bigl(m_1(x,t),\operatorname{conv}(\Pi_1(x^\ast))\bigr)\\
&+2|1-\lambda|\,
\operatorname{dist}\bigl(m_2(x,t),\operatorname{conv}(\Pi_2(x^\ast))\bigr).
\end{aligned}
\]
Thus it suffices to prove that, for each \(i=1,2\),
\begin{equation}\label{eq:mi_eps_delta_final_rewrite}
\forall \varepsilon>0\ \exists \delta>0:\quad
\|x-x^\ast\|+t\le\delta
\Longrightarrow
\operatorname{dist}\bigl(m_i(x,t),\operatorname{conv}(\Pi_i(x^\ast))\bigr)
\le \varepsilon .
\end{equation}

\medskip
\noindent
\textbf{Step 2: Compactness and contradiction.}
Fix \(i\in\{1,2\}\), and suppose that
\eqref{eq:mi_eps_delta_final_rewrite} fails. Then there exist
\(\varepsilon_0>0\), \(x_n\to x^\ast\), and \(t_n\to0^+\) such that
\[
\operatorname{dist}\bigl(
m_i(x_n,t_n),\operatorname{conv}(\Pi_i(x^\ast))
\bigr)>\varepsilon_0
\qquad \forall n.
\]
Recall that
\[
\nu_{t_n,i}^{x_n}(dy)
=
\frac{G_{t_n}(x_n-y)}{u_i(x_n,t_n)}\,d u_{0,i}(y)
\]
is a probability measure supported on \(A_i\), and that
\[
m_i(x_n,t_n)=\int_{A_i} y\,d\nu_{t_n,i}^{x_n}(y).
\]
Since \(m_i(x_n,t_n)\in\operatorname{conv}(A_i)\) and \(A_i\) is compact, up to
a subsequence,
\[
m_i(x_n,t_n)\to \bar m_i
\qquad
\text{for some }\bar m_i\in\operatorname{conv}(A_i).
\]
Up to a further subsequence, the probability measures
\(\nu_{t_n,i}^{x_n}\) converge weakly to some probability measure \(\nu\)
supported on \(A_i\). We shall prove that
\[
\operatorname{supp} (\nu)\subset \Pi_i(x^\ast).
\]
This will imply
\[
\bar m_i
=
\lim_{n\to\infty}\int_{A_i} y\,d\nu_{t_n,i}^{x_n}(y)
=
\int_{A_i} y\,d\nu(y)
\in \operatorname{conv}(\Pi_i(x^\ast)),
\]
contradicting the strict distance bound above.

\medskip
\noindent
\textbf{Step 3: Identification of the limiting support.}
If \(\Pi_i(x^\ast)=A_i\), there is nothing to prove. Otherwise, let \(O\) be an
open neighborhood of \(\Pi_i(x^\ast)\) in \(A_i\). Since \(A_i\setminus O\) is
compact and disjoint from \(\Pi_i(x^\ast)\), there exists \(\eta>0\) such that
\[
\|x^\ast-y\|^2\ge d_i(x^\ast)^2+\eta
\qquad
\forall y\in A_i\setminus O.
\]
Moreover,
\[
\|x_n-y\|^2-d_i(x_n)^2
\longrightarrow
\|x^\ast-y\|^2-d_i(x^\ast)^2
\qquad\text{uniformly on }A_i.
\]
Therefore, for \(n\) large enough,
\[
\|x_n-y\|^2\ge d_i(x_n)^2+\frac{\eta}{2}
\qquad
\forall y\in A_i\setminus O.
\]
It follows that
\[
\begin{aligned}
\nu_{t_n,i}^{x_n}(A_i\setminus O)
&\le
\frac{
(4\pi t_n)^{-d/2}
\exp\bigl(-(d_i(x_n)^2+\eta/2)/(4t_n)\bigr)
}
{u_i(x_n,t_n)} .
\end{aligned}
\]
Using the lower bound \eqref{eq:u_lower_clean}, we obtain
\[
\nu_{t_n,i}^{x_n}(A_i\setminus O)
\le
C\,t_n^{-\alpha/2}
\exp\Bigl(
\frac{d_i(x_n)}{2\sqrt{t_n}}-\frac{\eta}{8t_n}
\Bigr)
\longrightarrow 0.
\]
Hence the mass of \(\nu_{t_n,i}^{x_n}\) outside every neighborhood of
\(\Pi_i(x^\ast)\) vanishes as \(n\to\infty\).

Now take a decreasing sequence of open neighborhoods \(O_m\) of
\(\Pi_i(x^\ast)\) in \(A_i\) such that
\[
\bigcap_{m\ge1} O_m=\Pi_i(x^\ast),
\qquad
\overline{O_{m+1}}\subset O_m .
\]
The estimate above gives
\[
\nu_{t_n,i}^{x_n}(A_i\setminus O_{m+1})\to0
\qquad \forall m.
\]
Choose a continuous cutoff \(\psi_m:A_i\to[0,1]\) such that
\[
\psi_m=0 \text{ on } \overline{O_{m+1}},
\qquad
\psi_m=1 \text{ on } A_i\setminus O_m .
\]
Then
\[
\int_{A_i}\psi_m\,d\nu_{t_n,i}^{x_n}
\le
\nu_{t_n,i}^{x_n}(A_i\setminus O_{m+1})
\to0.
\]
Passing to the weak limit gives
\[
\int_{A_i}\psi_m\,d\nu=0,
\]
and therefore
\[
\nu(A_i\setminus O_m)=0.
\]
Since this holds for every \(m\), we obtain
\[
\nu(A_i\setminus \Pi_i(x^\ast))=0.
\]
and hence \(\operatorname{supp} (\nu)\subset\Pi_i(x^\ast)\). The conclusion follows by step 2.
\end{proof}

\begin{proof}[Proof of Lemma~\ref{lem:quantitative_convergence_smooth_field}]
Since
\[
x_0\notin \mathrm{ND}(A_1,A_2)
=
\mathrm{ND}(d_1^2)\cup \mathrm{ND}(d_2^2),
\]
both \(d_1^2\) and \(d_2^2\) are differentiable at \(x_0\). Hence the nearest points of \(x_0\) in \(A_1\) and \(A_2\) are unique, denoted respectively by \(x_{k^\ast}\) and \(y_{\ell^\ast}\). In particular,
\[
\delta_1(x_0)\coloneqq \min_{k\neq k^\ast}\bigl(\|x_0-x_k\|^2-\|x_0-x_{k^\ast}\|^2\bigr)>0,
\]
and
\[
\delta_2(x_0)\coloneqq \min_{\ell\neq \ell^\ast}\bigl(\|x_0-y_\ell\|^2-\|x_0-y_{\ell^\ast}\|^2\bigr)>0.
\]

By continuity of the functions
\[
x\longmapsto \|x-x_k\|^2-\|x-x_{k^\ast}\|^2,
\qquad
x\longmapsto \|x-y_\ell\|^2-\|x-y_{\ell^\ast}\|^2,
\]
there exist a neighborhood \(U\) of \(x_0\) and constants \(\eta_1,\eta_2>0\) such that, for every \(x\in U\),
\[
\|x-x_k\|^2-\|x-x_{k^\ast}\|^2\ge \eta_1
\qquad \forall k\neq k^\ast,
\]
and
\[
\|x-y_\ell\|^2-\|x-y_{\ell^\ast}\|^2\ge \eta_2
\qquad \forall \ell\neq \ell^\ast.
\]
In particular, for every \(x\in U\), the nearest points of \(x\) in \(A_1\) and \(A_2\) remain uniquely given by \(x_{k^\ast}\) and \(y_{\ell^\ast}\). Therefore
\[
\nabla(d_1^2)(x)=2(x-x_{k^\ast}),
\qquad
\nabla(d_2^2)(x)=2(x-y_{\ell^\ast}),
\]
and hence
\[
\nabla\Phi_\lambda(x)
=
\lambda \nabla(d_1^2)(x)+(1-\lambda)\nabla(d_2^2)(x)
=
2\bigl(x-\lambda x_{k^\ast}-(1-\lambda)y_{\ell^\ast}\bigr)
\qquad \forall x\in U.
\]

We now estimate the discrepancy between \(m_i(x,t)\) and the corresponding nearest point, uniformly for \(x\in U\). For \(i=1\),
\[
m_1(x,t)=
\frac{\sum_{k=1}^{n_1} w_{1,k}e^{-\|x-x_k\|^2/(4t)}x_k}
{\sum_{k=1}^{n_1} w_{1,k}e^{-\|x-x_k\|^2/(4t)}}.
\]
Factoring out the dominant exponential \(e^{-\|x-x_{k^\ast}\|^2/(4t)}\), we obtain
\[
m_1(x,t)-x_{k^\ast}
=
\frac{
\sum_{k\neq k^\ast} w_{1,k}
e^{-(\|x-x_k\|^2-\|x-x_{k^\ast}\|^2)/(4t)}
(x_k-x_{k^\ast})
}{
w_{1,k^\ast}
+
\sum_{k\neq k^\ast} w_{1,k}
e^{-(\|x-x_k\|^2-\|x-x_{k^\ast}\|^2)/(4t)}
}.
\]
Since, for every \(x\in U\),
\[
\|x-x_k\|^2-\|x-x_{k^\ast}\|^2\ge \eta_1
\qquad \forall k\neq k^\ast,
\]
and
\[
\|x_k-x_{k^\ast}\|\le \operatorname{diam}(A_1),
\qquad
w_{1,k^\ast}\ge \min_k w_{1,k}>0,
\]
it follows that there exists a constant \(C_1>0\), independent of \(x\in U\) and \(t>0\), such that
\[
\|m_1(x,t)-x_{k^\ast}\|
\le C_1 e^{-\eta_1/(4t)}
\qquad \forall x\in U,\ \forall t>0.
\]
Exactly the same argument yields a constant \(C_2>0\) such that
\[
\|m_2(x,t)-y_{\ell^\ast}\|
\le C_2 e^{-\eta_2/(4t)}
\qquad \forall x\in U,\ \forall t>0.
\]

Finally, using the mean-shift formula
\[
\nabla F_\lambda(x,t)
=
2\bigl(x-\lambda m_1(x,t)-(1-\lambda)m_2(x,t)\bigr),
\]
we obtain, for every \(x\in U\) and \(t>0\),
\[
\begin{aligned}
\|\nabla F_\lambda(x,t)-\nabla\Phi_\lambda(x)\|
&\le
2|\lambda|\,\|m_1(x,t)-x_{k^\ast}\|
+
2|1-\lambda|\,\|m_2(x,t)-y_{\ell^\ast}\| \\
&\le
2|\lambda|\,C_1 e^{-\eta_1/(4t)}
+
2|1-\lambda|\,C_2 e^{-\eta_2/(4t)}.
\end{aligned}
\]
Setting
\[
\eta\coloneqq \frac14\min\{\eta_1,\eta_2\},
\]
and enlarging the constant if necessary, we conclude that there exists \(C>0\) such that
\[
\|\nabla F_\lambda(x,t)-\nabla\Phi_\lambda(x)\|
\le C e^{-\eta/t}
\qquad \forall x\in U,\ \forall t>0.
\]
This proves \eqref{eq:quantitative_empirical_smooth_drift}.

The endpoint cases admit the announced weaker assumptions. Indeed, if
\(\lambda=1\), then
\[
\nabla F_1(x,t)=2(x-m_1(x,t)),
\qquad
\nabla\Phi_1(x)=\nabla d_1^2(x),
\]
so only the uniqueness of the nearest point in \(A_1\) is needed. Similarly, if
\(\lambda=0\), then
\[
\nabla F_0(x,t)=2(x-m_2(x,t)),
\qquad
\nabla\Phi_0(x)=\nabla d_2^2(x),
\]
so only the uniqueness of the nearest point in \(A_2\) is needed. The same
argument above, with the inactive support omitted, proves the estimate under
these weaker endpoint assumptions.
\end{proof}

\subsection{Proof of Theorem~\ref{thm:timeshift_empirical}}\label{sec:proof_time_shift}
We will prove Theorem~\ref{thm:timeshift_empirical} under Assumption~\ref{ass:uniform_small_ball_mass}, which covers the finite Dirac mixture case.
Let \(X=(X_t)_{t\in(0,T]}\) be the solution of the deterministic generation flow~\eqref{eq:generation_ode}, and let
\(Y=(Y_{\tau})_{\tau\ge 0}\) be its similarity-time rescaling defined in~\eqref{eq:Y}, equivalently the solution of~\eqref{eq:generation_ode_Y}.

\paragraph{Step 1: Lyapunov confinement and uniform drift bounds.}
Set
\[
K\coloneqq \operatorname{conv}\bigl(\lambda A_1+(1-\lambda)A_2\bigr).
\]
Recall from the Gaussian mean-shift formula~\eqref{eq:Gaussian-mean-shift}--\eqref{eq:m_i} that
\[
m_i(x,t)\in \operatorname{conv}(A_i),
\qquad i=1,2.
\]
Hence
\[
\lambda m_1(x,t)+(1-\lambda)m_2(x,t)
\in
\lambda \operatorname{conv}(A_1)+(1-\lambda)\operatorname{conv}(A_2)
=K.
\]
Using~\eqref{eq:nabla_F_again}, we obtain
\begin{equation}\label{eq:nabla_F_in_2xK_empirical}
\nabla F_\lambda(x,t)\in 2(x-K)
\qquad \forall (x,t)\in \R^d\times(0,T].
\end{equation}

Define the Lyapunov function
\[
L(x)\coloneqq \frac12\,\operatorname{dist}(x,K)^2.
\]
Since \(K\) is nonempty, closed, and convex, \(L\in C^1(\R^d)\) and
\[
\nabla L(x)=x-\Pi_K(x),
\]
where \(\Pi_K\) denotes the metric projection onto \(K\).

By~\eqref{eq:nabla_F_in_2xK_empirical}, for every \(\tau\ge0\), there exists \(z_\tau\in K\) such that
\[
\nabla F_\lambda\bigl(Y_\tau,Te^{-\tau}\bigr)=2(Y_\tau-z_\tau).
\]
Since \(Y\) solves~\eqref{eq:generation_ode_Y}, we have
\[
\dot Y_\tau=-\frac14\,\nabla F_\lambda\bigl(Y_\tau,Te^{-\tau}\bigr)
=-\frac12\,(Y_\tau-z_\tau).
\]
Let \(\pi_\tau\coloneqq \Pi_K(Y_\tau)\). Then
\begin{align}
\frac{d}{d\tau}L(Y_\tau)
&=\langle \nabla L(Y_\tau),\dot Y_\tau\rangle \nonumber\\
&=\bigl\langle Y_\tau-\pi_\tau,-\tfrac12(Y_\tau-z_\tau)\bigr\rangle \nonumber\\
&=-\frac12\|Y_\tau-\pi_\tau\|^2
-\frac12\langle Y_\tau-\pi_\tau,\pi_\tau-z_\tau\rangle.
\label{eq:dL_split_empirical}
\end{align}
By the characterization of the metric projection onto a closed convex set,
\[
\langle Y_\tau-\pi_\tau,z-\pi_\tau\rangle\le 0
\qquad \forall z\in K.
\]
Taking \(z=z_\tau\in K\), we get
\[
\langle Y_\tau-\pi_\tau,\pi_\tau-z_\tau\rangle\ge 0.
\]
Therefore~\eqref{eq:dL_split_empirical} yields
\[
\frac{d}{d\tau}L(Y_\tau)\le -\frac12\|Y_\tau-\pi_\tau\|^2=-L(Y_\tau).
\]
Hence
\begin{equation}\label{eq:L_decay_empirical}
L(Y_\tau)\le e^{-\tau}L(Y_0)=e^{-\tau}L(x_T)
\qquad \forall \tau\ge0.
\end{equation}

Define the compact sublevel set
\[
\Omega\coloneqq \{x\in\R^d:\ L(x)\le L(x_T)\}.
\]
Then~\eqref{eq:L_decay_empirical} implies
\[
Y_\tau\in \Omega
\qquad \forall \tau\ge0.
\]
This proves that \(Y\) remains in a compact subset of \(\R^d\).

Moreover, by~\eqref{eq:nabla_F_in_2xK_empirical},
\[
\|\nabla F_\lambda(x,t)\|
\le 2\,\operatorname{dist}(x,K)
\le 2\Bigl(\|x\|+\sup_{z\in K}\|z\|\Bigr).
\]
Since \(Y_\tau\in\Omega\) for all \(\tau\ge0\), we deduce that there exists \(C>0\) such that
\begin{equation}\label{eq:Ydot_uniform_empirical}
\|\dot Y_\tau\|
=
\frac14\bigl\|\nabla F_\lambda(Y_\tau,Te^{-\tau})\bigr\|
\le C
\qquad \forall \tau\ge0.
\end{equation}

\paragraph{Step 2: Time shifts and compactness.}
Let \((\tau_j)_{j\ge1}\) be any sequence such that \(\tau_j\to\infty\), and define
\[
Y^j_\tau\coloneqq Y_{\tau+\tau_j},
\qquad \tau\ge0.
\]
Since \(Y_\tau\in\Omega\) for all \(\tau\ge0\), each \(Y^j\) takes values in the fixed compact set \(\Omega\). Moreover, by~\eqref{eq:Ydot_uniform_empirical},
\[
\Bigl\|\frac{d}{d\tau}Y^j_\tau\Bigr\|
=
\|\dot Y_{\tau+\tau_j}\|
\le C
\qquad \forall \tau\ge0.
\]
Thus the family \((Y^j)_j\) is uniformly bounded and equi-Lipschitz on \(\R_+\). By Arzelà--Ascoli, it is relatively compact in \(C_{\mathrm{loc}}(\R_+;\R^d)\).

Hence, up to extraction of a subsequence, there exists a Lipschitz curve
\[
Z=(Z_\tau)_{\tau\ge0}\in C_{\mathrm{loc}}(\R_+;\R^d)
\]
such that
\begin{equation}\label{eq:Yj_to_Z_empirical}
Y^j\to Z
\qquad \text{in }C_{\mathrm{loc}}(\R_+;\R^d).
\end{equation}

\paragraph{Step 3: Integral formulation and weak-\(*\) limit of the drifts.}
Fix \(0\le a<b<\infty\). For each \(j\), integrating the equation for \(Y\) gives
\begin{equation}\label{eq:integral_shift_empirical}
Y^j_b-Y^j_a
=
-\frac14\int_a^b p_j(\tau)\,d\tau,
\qquad
p_j(\tau)\coloneqq \nabla F_\lambda\bigl(Y^j_\tau,Te^{-(\tau+\tau_j)}\bigr).
\end{equation}
By the uniform bound on \(\nabla F_\lambda\) along \(Y\), the sequence \((p_j)_j\) is bounded in \(L^\infty((a,b);\R^d)\). Thus, after extracting a further subsequence if necessary, there exists
\[
p\in L^\infty((a,b);\R^d)
\]
such that
\begin{equation}\label{eq:pj_weakstar_empirical}
p_j\rightharpoonup^\ast p
\qquad \text{in }L^\infty((a,b);\R^d).
\end{equation}
Passing to the limit in~\eqref{eq:integral_shift_empirical} using~\eqref{eq:Yj_to_Z_empirical} and~\eqref{eq:pj_weakstar_empirical}, we obtain
\[
Z_b-Z_a=-\frac14\int_a^b p(\tau)\,d\tau.
\]
Hence \(Z\) is absolutely continuous and
\begin{equation}\label{eq:Zdot_empirical}
\dot Z_\tau=-\frac14\,p(\tau)
\qquad \text{for a.e. }\tau\in(a,b).
\end{equation}

To identify the limit \(p(\tau)\), we use the following lemma, whose proof is given after the proof of Theorem~\ref{thm:timeshift_empirical}.

\begin{lem}[$L^\infty$ weak-\(*\) limits preserve pointwise convex constraints]
\label{lem:weakstar_closed_convex_empirical}
Let \(I=(a,b)\) and let \(p_j\in L^\infty(I;\R^d)\) satisfy
\[
p_j\rightharpoonup^\ast p
\qquad\text{in }L^\infty(I;\R^d).
\]
Assume that there exists a measurable set-valued map \(C:I\rightrightarrows\R^d\) such that:
\begin{enumerate}
\item \(C(\tau)\) is nonempty, closed, convex, and uniformly bounded for a.e.~\(\tau\in I\);
\item
\( 
\mathrm{dist}\bigl(p_j(\tau),C(\tau)\bigr)\to0\) for a.e.~\(\tau\in I
\).
\end{enumerate}
Then,
\[
p(\tau)\in C(\tau)
\qquad\text{for a.e. }\tau\in I.
\]
\end{lem}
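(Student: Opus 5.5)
We plan to prove Lemma~\ref{lem:weakstar_closed_convex_empirical} by testing against supporting half-spaces and exploiting the closedness and convexity of \(C(\tau)\). Throughout we write \(\sigma_{C(\tau)}(e)\coloneqq\sup_{c\in C(\tau)}\langle e,c\rangle\) for the support function. The key identity is the Hahn--Banach characterization: a point \(q\) lies in the closed convex set \(C(\tau)\) if and only if \(\langle e,q\rangle\le \sigma_{C(\tau)}(e)\) for all \(e\) in a countable dense subset \(D\) of the unit sphere. Since \(C(\tau)\) is nonempty and bounded, \(\sigma_{C(\tau)}(\cdot)\) is Lipschitz in \(e\), so the restriction to \(D\) suffices. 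It therefore suffices to prove, for each fixed \(e\in D\), that
\[
\langle e,p(\tau)\rangle\le \sigma_{C(\tau)}(e)\qquad\text{for a.e. }\tau\in I.
\]
A countable union of null sets then gives the conclusion.

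Fix \(e\in D\). We first check that \(\tau\mapsto\sigma_{C(\tau)}(e)\) is measurable and bounded. Boundedness follows from the uniform bound on \(C\). Measurability follows from the measurability of \(C\), for instance via a Castaing representation: \(\sigma_{C(\tau)}(e)=\sup_n\langle e,c_n(\tau)\rangle\) for measurable selections \(c_n\) dense in \(C(\tau)\). Next, hypothesis (2) gives
\[
\langle e,p_j(\tau)\rangle\le \sigma_{C(\tau)}(e)+\operatorname{dist}(p_j(\tau),C(\tau)),
\]
and the error term tends to \(0\) a.e. It is also uniformly bounded, because \(p_j\) is bounded in \(L^\infty\) (weak-\(*\) convergent sequences are bounded) and \(C\) is uniformly bounded.

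Now take any measurable \(E\subset I\). Test the weak-\(*\) convergence against \(\mathbf 1_E e\in L^1(I;\R^d)\):
\[
\int_E\langle e,p\rangle\,d\tau=\lim_{j\to\infty}\int_E\langle e,p_j\rangle\,d\tau\le \int_E\sigma_{C(\tau)}(e)\,d\tau+\limsup_{j\to\infty}\int_E \operatorname{dist}(p_j,C)\,d\tau.
\]
The last term vanishes by dominated convergence, since \(I\) is bounded and the integrands are uniformly bounded and tend to \(0\) a.e. Hence \(\int_E(\langle e,p\rangle-\sigma_{C(\cdot)}(e))\,d\tau\le 0\) for every measurable \(E\). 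Choosing \(E=\{\langle e,p\rangle>\sigma_{C(\cdot)}(e)\}\) shows that this set is null.

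The main obstacle is the measurability bookkeeping: measurability of the support function in \(\tau\), and the reduction to countably many directions so that the exceptional null sets can be united. Once these are in place, the analytic content is just dominated convergence combined with the separation theorem.
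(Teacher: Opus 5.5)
Your proof is correct and follows essentially the same route as the paper. Both arguments use the one-sided bound $\langle e,p_j\rangle\le\sigma_{C(\tau)}(e)+\operatorname{dist}(p_j(\tau),C(\tau))$, then dominated convergence after testing the weak-$*$ limit against nonnegative integrable functions (your indicators $\mathbf 1_E e$ play the role of the paper's $\varphi\,\xi$), then a reduction to countably many directions, and finally Hahn--Banach separation; your explicit justifications of the uniform $L^\infty$ bound on $p_j$ and of the measurability of $\tau\mapsto\sigma_{C(\tau)}(e)$ fill in points the paper leaves implicit.
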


\paragraph{Step 4: Identification of the limiting inclusion.}
Fix \(\tau\in(a,b)\). By~\eqref{eq:Yj_to_Z_empirical} and \(\tau_j\to+\infty\),
\[
Y^j_\tau\to Z_\tau,
\qquad
Te^{-(\tau+\tau_j)}\to0.
\]
Hence Lemma~\ref{lem:Varadhan-grad} gives
\[
\mathrm{dist}\Bigl(
p_j(\tau),
\widehat{\partial}\Phi_\lambda(Z_\tau)
\Bigr)\to0,
\qquad
p_j(\tau)=\nabla F_\lambda\bigl(Y^j_\tau,Te^{-(\tau+\tau_j)}\bigr).
\]

We now apply Lemma~\ref{lem:weakstar_closed_convex_empirical} with
\[
C(\tau)=\widehat{\partial}\,\Phi_\lambda(Z_\tau).
\]
Since \(Z([a,b])\) is compact and \(\widehat{\partial}\,\Phi_\lambda\) is upper
semicontinuous with nonempty compact convex values, \(C\) is measurable and
uniformly bounded on \((a,b)\). Therefore,
\begin{equation}\label{eq:p_in_outer_empirical}
p(\tau)\in \widehat{\partial}\Phi_\lambda(Z_\tau)
\qquad\text{for a.e. }\tau\in(a,b).
\end{equation}
Together with~\eqref{eq:Zdot_empirical}, this yields
\[
\dot Z_\tau\in -\frac14\,\widehat{\partial}\Phi_\lambda(Z_\tau)
\qquad\text{for a.e. }\tau\in(a,b).
\]
Since \(a,b\) are arbitrary, the inclusion holds for a.e. \(\tau\ge0\).

In the MoE regime \(0\le\lambda\le1\), Lemma~\ref{lem:clarke_outer_distinction}
gives \(\widehat{\partial}\Phi_\lambda=\partial^C\Phi_\lambda\), and therefore
\[
\dot Z_\tau\in -\frac14\,\partial^C\Phi_\lambda(Z_\tau)
\qquad\text{for a.e. }\tau\ge0.
\]
Thus \(Z\) is a global Carath\'eodory solution of the corresponding limiting
inclusion. This completes the proof of
Theorem~\ref{thm:timeshift_empirical}.

\begin{proof}[Proof of Lemma~\ref{lem:weakstar_closed_convex_empirical}]
Since \(C(\tau)\) is uniformly bounded, there exists \(M>0\) such that
\[
C(\tau)\subset B(0,M)
\qquad \text{for a.e.\ }\tau\in I.
\]
For \(\xi\in\mathbb{R}^d\), define the support function
\[
h(\tau,\xi)\coloneqq \sup_{q\in C(\tau)}\langle \xi,q\rangle.
\]
Then \(|h(\tau,\xi)|\le M\|\xi\|\) for a.e.\ \(\tau\), so \(h(\cdot,\xi)\in L^\infty(I)\).

Fix \(\xi\in\mathbb{R}^d\). For each \(j\) and a.e.\ \(\tau\in I\),
\[
\langle \xi,p_j(\tau)\rangle
\le h(\tau,\xi)+\|\xi\|\,\operatorname{dist}\bigl(p_j(\tau),C(\tau)\bigr).
\]
Indeed, this follows by taking \(q=\Pi_{C(\tau)}p_j(\tau)\) and using the definition of \(h(\tau,\xi)\).

Let \(\varphi\in L^1(I)\) with \(\varphi\ge 0\). Integrating, we get
\[
\int_I \varphi(\tau)\,\langle \xi,p_j(\tau)\rangle\,d\tau
\le
\int_I \varphi(\tau)\,h(\tau,\xi)\,d\tau
+\|\xi\|\int_I \varphi(\tau)\,\operatorname{dist}\bigl(p_j(\tau),C(\tau)\bigr)\,d\tau.
\]
Since \(\mathrm{dist}\bigl(p_j(\tau),C(\tau)\bigr)\to0\) for a.e.~$\tau\in I$ and is uniformly bounded, by dominated convergence, the last term tends to \(0\). Passing to the limit and using \(p_j\rightharpoonup^\ast p\), we obtain
\[
\int_I \varphi(\tau)\,\langle \xi,p(\tau)\rangle\,d\tau
\le
\int_I \varphi(\tau)\,h(\tau,\xi)\,d\tau
\qquad \forall \varphi\in L^1(I),\ \varphi\ge 0.
\]
Hence
\[
\langle \xi,p(\tau)\rangle\le h(\tau,\xi)
\qquad \text{for a.e.\ }\tau\in I.
\]
Applying this for every \(\xi\in\mathbb{Q}^d\), and using that \(\mathbb{Q}^d\) is countable, we obtain a full-measure set \(I_0\subset I\) such that for every \(\tau\in I_0\),
\[
\langle \xi,p(\tau)\rangle\le h(\tau,\xi)
\qquad \forall \xi\in\mathbb{Q}^d.
\]
By continuity of \(\xi\mapsto h(\tau,\xi)\), the inequality extends to all \(\xi\in\mathbb{R}^d\). Therefore,
\[
\langle \xi,p(\tau)\rangle\le \sup_{q\in C(\tau)}\langle \xi,q\rangle
\qquad \forall \xi\in\mathbb{R}^d,\ \forall \tau\in I_0.
\]
If \(p(\tau)\notin C(\tau)\) for some \(\tau\in I_0\), the Hahn--Banach separation theorem yields
\[
\langle \xi,p(\tau)\rangle>\sup_{q\in C(\tau)}\langle \xi,q\rangle
\]
for some \(\xi\in\mathbb{R}^d\), a contradiction. Hence \(p(\tau)\in C(\tau)\) for every \(\tau\in I_0\), that is, for a.e.\ \(\tau\in I\).
\end{proof}

\section{Clarke analysis on the geometric potential}
\label{sec:autonomous_dynamics}
The purpose of this section is to analyze the autonomous limiting inclusions arising in Theorem~\ref{thm:timeshift_empirical}. Since the geometric potential \(\Phi_\lambda\) is only locally Lipschitz in general, the natural framework is that of Clarke generalized gradients and differential inclusions; see Clarke~\cite{clarke1989optimization} and Aubin--Frankowska~\cite{aubin1990set}. We first clarify the Clarke structure of \(\Phi_\lambda\), then establish the existence of global Carath\'eodory solutions, and finally show that, in the empirical setting, the piecewise quadratic geometry of \(\Phi_\lambda\) forces every such solution to converge to a critical point.

In this section, the support sets \(A_1\) and \(A_2\) are assumed to be compact. Recall that
\[
d_i(x)=\mathrm{dist}(x,A_i),
\qquad
\Phi_\lambda(x)=\lambda d_1(x)^2+(1-\lambda)d_2(x)^2.
\]
Since \(A_i\) is compact, the projection set
\[
\Pi_i(x)=\argmin_{a\in A_i}\|x-a\|
\]
is nonempty and compact for every \(x\in\R^d\).

\subsection{Clarke structure of the geometric potential}
\label{subsec:clarke_structure_autonomous}

We begin by proving Lemma~\ref{lem:clarke_outer_distinction}, which clarifies the relation between the Clarke and outer Clarke subdifferentials in the two guidance regimes.

The standard formula for the Clarke subdifferential of the squared distance to a compact set gives
\begin{equation}\label{eq:clarke_di_formula}
\partial^C(d_i^2)(x)
=
\operatorname{conv}\{\,2(x-a):\ a\in \Pi_i(x)\,\},
\qquad i=1,2.
\end{equation}
In particular, \(d_i^2\) is differentiable at \(x\) if and only if \(\Pi_i(x)\) is a singleton.

\begin{proof}[Proof of Lemma~\ref{lem:clarke_outer_distinction}]
We divide the proof into the two regimes.

\medskip
\noindent
\textbf{Step 1: MoE case \((0\le \lambda\le 1)\).}
In this regime,
\[
\Phi_\lambda(x)
=
\lambda d_1(x)^2+(1-\lambda)d_2(x)^2
=
\min_{a_1\in A_1}\min_{a_2\in A_2}
\Bigl(
\lambda\|x-a_1\|^2+(1-\lambda)\|x-a_2\|^2
\Bigr).
\]
Thus \(\Phi_\lambda\) is the pointwise minimum of the \(C^1\) family
\[
Q_{a_1,a_2}(x)\coloneqq \lambda\|x-a_1\|^2+(1-\lambda)\|x-a_2\|^2,
\qquad (a_1,a_2)\in A_1\times A_2.
\]

A pair \((a_1,a_2)\) is active at \(x\) if and only if
\[
Q_{a_1,a_2}(x)=\Phi_\lambda(x).
\]
Since both coefficients are nonnegative, this is equivalent to
\[
a_1\in \Pi_1(x),
\qquad
a_2\in \Pi_2(x),
\]
for \(0<\lambda<1\); the endpoint cases \(\lambda=0\) and \(\lambda=1\) follow by the same argument after the obvious simplification.

Applying the Clarke--Danskin formula to
\[
-\Phi_\lambda(x)
=
\max_{a_1\in A_1,\ a_2\in A_2}
\bigl(-Q_{a_1,a_2}(x)\bigr),
\]
and using \(\partial^C(-f)(x)=-\partial^C f(x)\), we obtain
\[
\partial^C\Phi_\lambda(x)
=
\overline{\operatorname{conv}}
\Bigl\{
\nabla Q_{a_1,a_2}(x):
(a_1,a_2)\in \Pi_1(x)\times \Pi_2(x)
\Bigr\}.
\]
Since
\[
\nabla Q_{a_1,a_2}(x)
=
2\lambda(x-a_1)+2(1-\lambda)(x-a_2),
\]
we obtain
\[
\partial^C\Phi_\lambda(x)
=
\overline{\operatorname{conv}}
\Bigl\{
2\lambda(x-a_1)+2(1-\lambda)(x-a_2):
a_1\in\Pi_1(x),\ a_2\in\Pi_2(x)
\Bigr\}.
\]
Because \(\Pi_1(x)\) and \(\Pi_2(x)\) are compact, the set of active gradients is compact, hence its convex hull is compact, and the closure may be removed. Therefore
\begin{equation}\label{eq:Clarke_MoE}
\partial^C\Phi_\lambda(x)
=
\operatorname{conv}
\Bigl\{
2\lambda(x-a_1)+2(1-\lambda)(x-a_2):
a_1\in\Pi_1(x),\ a_2\in\Pi_2(x)
\Bigr\}.
\end{equation}
Together with \eqref{eq:clarke_di_formula} and the definition of \(\widehat{\partial}\,\Phi_\lambda\) in \eqref{eq:outer_clarke_Phi}, this yields \eqref{eq:clarke_outer_moe_equality}.

\medskip
\noindent
\textbf{Step 2: CFG case \((\lambda>1)\).}
For arbitrary compact supports \(A_1,A_2\), the general Clarke sum rule gives
\[
\partial^C\Phi_\lambda(x)
=
\partial^C\bigl(\lambda d_1^2+(1-\lambda)d_2^2\bigr)(x)
\subseteq
\lambda\,\partial^C(d_1^2)(x)
+
(1-\lambda)\,\partial^C(d_2^2)(x)
=
\widehat{\partial}\Phi_\lambda(x).
\]

Assume now that \(A_1\) and \(A_2\) are finite. If
\[
x\notin \mathrm{ND}(d_1^2)\cap\mathrm{ND}(d_2^2),
\]
then at least one of \(d_1^2\) and \(d_2^2\) is smooth in a neighborhood of
\(x\), since outside the Voronoi interface the nearest point is locally unique
and locally constant. The exact Clarke sum rule with one smooth term therefore
yields
\[
\partial^C\Phi_\lambda(x)
=
\lambda\,\partial^C(d_1^2)(x)
+
(1-\lambda)\,\partial^C(d_2^2)(x)
=
\widehat{\partial}\Phi_\lambda(x).
\]
On the simultaneous interface
\(\mathrm{ND}(d_1^2)\cap\mathrm{ND}(d_2^2)\), the general inclusion above is the
only statement available in general. This proves the CFG claim.
\end{proof}

\subsection{Existence of global Carath\'eodory solutions}
\label{subsec:existence_autonomous}

\begin{lem}\label{lem:existence_caratheodory}
Assume that \(A_1\) and \(A_2\) are compact. Then, for every \(z_0\in\R^d\), the Cauchy problems \eqref{eq:autonomous_moe_clarke_main} and \eqref{eq:autonomous_cfg_outer_main} with initial datum \(Z_0=z_0\) admit at least one global Carath\'eodory solution.
\end{lem}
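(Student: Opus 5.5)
The plan is to apply the standard existence theory for differential inclusions with upper semicontinuous, convex, compact-valued right-hand sides and linear growth, as in \cite[Thm.~10.1.6]{aubin1990set} or Filippov's classical existence theorem. Set $\mathcal F(x)\coloneqq -\frac14\widehat{\partial}\,\Phi_\lambda(x)$. By Lemma~\ref{lem:clarke_outer_distinction}, $\widehat{\partial}\,\Phi_\lambda=\partial^C\Phi_\lambda$ when $0\le\lambda\le1$, so a single argument covers both \eqref{eq:autonomous_moe_clarke_main} and \eqref{eq:autonomous_cfg_outer_main}.

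\emph{Step 1: properties of $\mathcal F$.} By \eqref{eq:clarke_di_formula}, $\partial^C(d_i^2)(x)=2\bigl(x-\operatorname{conv}\Pi_i(x)\bigr)$, which is nonempty, convex and compact because $\Pi_i(x)$ is a nonempty compact subset of $A_i$. A Minkowski sum of two such sets, with real coefficients $\lambda$ and $1-\lambda$ of either sign, is again nonempty, convex and compact. For linear growth, note that $\operatorname{conv}\Pi_i(x)\subset\operatorname{conv}A_i\subset B(0,R)$ with $R\coloneqq\max_i\sup_{a\in A_i}\|a\|$. This gives
\[
\sup_{v\in\widehat{\partial}\Phi_\lambda(x)}\|v\|\le 2(|\lambda|+|1-\lambda|)\bigl(\|x\|+R\bigr).
\]
For upper semicontinuity, take $x_n\to x$ and $a_n\in\Pi_i(x_n)$. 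Compactness of $A_i$ and continuity of $d_i$ show that every limit point of $(a_n)$ lies in $\Pi_i(x)$. Hence $\Pi_i$ has closed graph with values in a fixed compact set, so it is u.s.c. Taking convex hulls preserves this for compact-valued maps, since the graph of $x\mapsto\operatorname{conv}\Pi_i(x)$ is closed by Carath\'eodory's theorem together with compactness of the coefficients. Finally, affine combinations of u.s.c.\ compact-valued maps are u.s.c. So $\widehat{\partial}\Phi_\lambda$ has closed graph and is locally bounded, hence u.s.c.

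\emph{Step 2: local existence.} Apply the existence theorem for u.s.c.\ convex compact-valued inclusions on $[0,\tau_0]$ to get an absolutely continuous solution from $z_0$. If one prefers a self-contained route, use Euler polygons $Z^h$ with slopes selected in $\mathcal F(Z^h_{kh})$. These are equi-Lipschitz on bounded intervals thanks to the a priori bound of Step 3. Extract a uniform limit by Arzel\`a--Ascoli, take a weak-$*$ limit of the slopes, and identify it with Lemma~\ref{lem:weakstar_closed_convex_empirical} applied to $C(\tau)=\mathcal F(Z_\tau)$. The hypothesis $\mathrm{dist}(\dot Z^h(\tau),\mathcal F(Z_\tau))\to0$ follows from upper semicontinuity, because $Z^h_{kh}\to Z_\tau$. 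This is exactly the mechanism of Step 3 in Section~\ref{sec:proof_time_shift}.

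\emph{Step 3: globality.} The linear growth bound gives
\[
\frac{d}{d\tau}\|Z_\tau\|\le \tfrac12(|\lambda|+|1-\lambda|)\bigl(\|Z_\tau\|+R\bigr)\quad\text{a.e.}
\]
Gronwall's inequality then excludes blow-up in finite time. Concatenating local solutions, or equivalently running the Euler scheme on every $[0,N]$ and using a diagonal extraction, yields a solution on $[0,\infty)$. A measurable selection $\xi=\dot Z$ is automatic, because $Z$ is absolutely continuous. In the MoE case one can alternatively bypass Gronwall with the confinement argument of Step 1 of Section~\ref{sec:proof_time_shift}: every element of $\widehat{\partial}\Phi_\lambda(x)$ lies in $2(x-K)$, so $\operatorname{dist}(Z_\tau,K)$ is nonincreasing. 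That route fails for $\lambda>1$, which is why I would rely on Gronwall.

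The main obstacle is Step 1's upper semicontinuity in the CFG regime. There the coefficient $1-\lambda$ is negative, so one cannot invoke monotonicity or semiconcavity properties of the Clarke subdifferential of $\Phi_\lambda$. The resolution is that u.s.c.\ of $\widehat{\partial}\Phi_\lambda$ is checked termwise through the closed graph of the projection maps $\Pi_i$, which is insensitive to the sign of the coefficients.
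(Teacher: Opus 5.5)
Your proof is correct and follows the paper's route. You check that the right-hand side is a Peano map (u.s.c., nonempty convex compact values, linear growth via $\partial^C(d_i^2)(z)\subset 2(z-\operatorname{conv}A_i)$) and invoke \cite[Thm.~10.1.6]{aubin1990set}; your closed-graph argument through $\Pi_i$ and the Euler/Gronwall details only spell out what the paper takes from standard Clarke calculus and the cited theorem. One correction to your aside, which your proof does not rely on: the confinement argument does not fail for $\lambda>1$. Since $\lambda+(1-\lambda)=1$, every element of $\widehat{\partial}\Phi_\lambda(x)$ equals $2\bigl(x-\lambda m_1-(1-\lambda)m_2\bigr)$ with $m_i\in\operatorname{conv}A_i$, so it lies in $2(x-K)$ with $K=\lambda\operatorname{conv}A_1+(1-\lambda)\operatorname{conv}A_2$. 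This $K$ is compact and convex for every real $\lambda$, which is exactly how Step~1 of Section~\ref{sec:proof_time_shift} covers both regimes.
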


\begin{proof}
By the Viability Theorem \cite[Thm.~10.1.6]{aubin1990set}, it suffices to check that the right-hand sides of \eqref{eq:autonomous_moe_clarke_main} and \eqref{eq:autonomous_cfg_outer_main} are Peano maps, namely upper semicontinuous set-valued maps with nonempty, convex, compact values and at most linear growth.

Since \(d_1^2\) and \(d_2^2\) are locally Lipschitz on \(\R^d\), the Clarke subdifferentials
\[
\partial^C(d_1^2),\qquad \partial^C(d_2^2)
\]
are upper semicontinuous and take nonempty, convex, compact values. Hence the same is true for
\[
z\longmapsto -\frac14\,\partial^C\Phi_\lambda(z)
\]
in the MoE case, and for
\[
z\longmapsto -\frac14\,\widehat{\partial}\,\Phi_\lambda(z)
=
-\frac14\Bigl(\lambda\,\partial^C(d_1^2)(z)+(1-\lambda)\,\partial^C(d_2^2)(z)\Bigr)
\]
in the CFG case.

It remains to check the linear growth. Since \(A_1\) and \(A_2\) are compact, there exists \(R>0\) such that
\[
A_1\cup A_2\subset B(0,R).
\]
Moreover, for every \(z\in\R^d\) and \(i\in\{1,2\}\),
\[
\partial^C(d_i^2)(z)\subset 2\bigl(z-\mathrm{conv}(A_i)\bigr),
\]
so every \(p_i\in\partial^C(d_i^2)(z)\) satisfies
\[
\|p_i\|\le 2(\|z\|+R).
\]
It follows that every element of both multifunctions
\[
-\frac14\,\partial^C\Phi_\lambda(z)
\qquad\text{and}\qquad
-\frac14\,\widehat{\partial}\,\Phi_\lambda(z)
\]
has norm bounded by \(C(1+\|z\|)\) for some constant \(C>0\) independent of \(z\).

Therefore both right-hand sides are Peano maps. The Viability Theorem then yields the existence of at least one global Carath\'eodory solution to \eqref{eq:autonomous_moe_clarke_main} and \eqref{eq:autonomous_cfg_outer_main}.
\end{proof}

\subsection{Empirical rigidity of the geometric potential}
\label{subsec:empirical_rigidity_autonomous}

We now specialize to the empirical setting
\[
A_1=\{x_1,\dots,x_{n_1}\},
\qquad
A_2=\{y_1,\dots,y_{n_2}\}.
\]
Then the geometric potential \(\Phi_\lambda\) is piecewise quadratic on a finite stratification of \(\R^d\). This yields the rigidity properties needed for the convergence analysis of autonomous solutions.

For each nonempty pair of index sets
\[
I\subset \{1,\dots,n_1\},
\qquad
J\subset \{1,\dots,n_2\},
\]
we define the corresponding stratum
\[
\Sigma_{I,J}
\coloneqq 
\Bigl\{
x\in\R^d:\ 
\Pi_1(x)=\{x_k:\ k\in I\},
\ \Pi_2(x)=\{y_\ell:\ \ell\in J\}
\Bigr\}.
\]
Since \(A_1\) and \(A_2\) are finite, only finitely many such strata are nonempty, and
\[
\R^d=\bigcup_{I,J}\Sigma_{I,J}.
\]

For \((k,\ell)\in \{1,\dots,n_1\}\times\{1,\dots,n_2\}\), we set
\[
Q_{k\ell}^\lambda(x)\coloneqq \lambda\|x-x_k\|^2+(1-\lambda)\|x-y_\ell\|^2.
\]
If \(x\in \Sigma_{I,J}\), then
\[
d_1(x)^2=\|x-x_k\|^2 \quad \forall k\in I,
\qquad
d_2(x)^2=\|x-y_\ell\|^2 \quad \forall \ell\in J,
\]
so all active branches \(Q_{k\ell}^\lambda\), \((k,\ell)\in I\times J\), coincide on \(\Sigma_{I,J}\).

Let \(M_{I,J}\) denote the affine hull of \(\Sigma_{I,J}\), and let \(V_{I,J}\) be the vector subspace parallel to \(M_{I,J}\). Since the differences \(Q_{k\ell}^\lambda-Q_{k'\ell'}^\lambda\) are affine and vanish on \(\Sigma_{I,J}\), they also vanish on \(M_{I,J}\). Hence the restriction
\[
q_{I,J}\coloneqq \left.Q_{k\ell}^\lambda\right|_{M_{I,J}}
\]
is well defined, independently of the choice of \((k,\ell)\in I\times J\). Moreover, each \(Q_{k\ell}^\lambda\) has Hessian \(2I\), so \(q_{I,J}\) is a strongly convex quadratic function on \(M_{I,J}\).

\begin{lem}[Empirical rigidity of critical points and MoE minimizers]
\label{lem:critical_points_and_local_minimizers_empirical}
In the empirical setting, the following properties hold:
\begin{enumerate}
    \item For every \(\lambda\ge0\), the outer Clarke critical set
    \[
    \mathrm{Crit}_{\mathrm{out}}(\Phi_\lambda)
    \]
    is finite. Consequently, the Clarke critical set
    \[
    \mathrm{Crit}(\Phi_\lambda)
    \]
    is finite.

    \item Assume in addition that \(0\le \lambda\le1\). If \(x^\ast\) is a
    local minimizer of \(\Phi_\lambda\), then there exists a neighborhood \(U\)
    of \(x^\ast\) such that
    \[
    \Phi_\lambda(x)=\Phi_\lambda(x^\ast)+\|x-x^\ast\|^2
    \qquad \forall x\in U.
    \]
    In particular, \(x^\ast\) is an isolated strict local minimizer,
    \(\Phi_\lambda\) is smooth in \(U\), and
    \[
    \nabla \Phi_\lambda(x)=2(x-x^\ast)
    \qquad \forall x\in U.
    \]

    \item If \(0<\lambda<1\), then every local minimizer of \(\Phi_\lambda\)
lies outside the full interface set:
\[
x^\ast\notin \mathrm{ND}(A_1,A_2)
=
\mathrm{ND}(d_1^2)\cup \mathrm{ND}(d_2^2).
\]
If \(\lambda=0\) \textup{(resp. \(\lambda=1\))}, then every local minimizer of
\(\Phi_\lambda\) lies outside the active interface:
\[
x^\ast\notin \mathrm{ND}(d_2^2)
\qquad
\textup{(resp. }x^\ast\notin \mathrm{ND}(d_1^2)\textup{)}.
\]
\end{enumerate}
\end{lem}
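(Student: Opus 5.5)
The proof works stratum by stratum, using the finite stratification \(\{\Sigma_{I,J}\}\) and the quadratic pieces \(Q^\lambda_{k\ell}\) introduced above.

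\textbf{Part 1 (finiteness of outer critical points).} Fix a nonempty stratum \(\Sigma_{I,J}\). For \(x\in\Sigma_{I,J}\), formula \eqref{eq:clarke_di_formula} gives
\[
\widehat{\partial}\Phi_\lambda(x)=2x-2\lambda\operatorname{conv}\{x_k\}_{k\in I}-2(1-\lambda)\operatorname{conv}\{y_\ell\}_{\ell\in J}.
\]
Hence \(0\in\widehat{\partial}\Phi_\lambda(x)\) means \(x=\lambda a+(1-\lambda)b\) with \(a\in\operatorname{conv}(x_I)\) and \(b\in\operatorname{conv}(y_J)\). This set is a compact convex set \(K_{I,J}\), but it need not be finite, so a second constraint is needed. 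On \(\Sigma_{I,J}\) the points \(x_k\), \(k\in I\), are equidistant from \(x\), so \(x-a\perp (x_k-x_{k'})\) for all \(k,k'\in I\); likewise \(x-b\perp (y_\ell-y_{\ell'})\) for \(\ell,\ell'\in J\). Write \(c_I\) for the circumcenter-type point, namely the orthogonal projection of \(x\) onto \(\operatorname{aff}(x_I)\), which is the same for all \(x\in\Sigma_{I,J}\) up to the normal component. Since \(a-c_I\in\operatorname{dir}(x_I)\), we get
\[
\lambda a+(1-\lambda)b=x \iff P_I(x-a)=0 \ \text{ and } \ P_J(x-b)=0
\]
in the relevant directions. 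I expect this to reduce to showing that \(0\in\widehat\partial\Phi_\lambda(x)\) forces \(x\) to be the unique critical point of the strongly convex restriction \(q_{I,J}\) on \(M_{I,J}\).

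The cleaner route is the following. For \(x\in\Sigma_{I,J}\) and \(v\in V_{I,J}\), every element \(g\) of \(\widehat\partial\Phi_\lambda(x)\) satisfies \(\langle g,v\rangle=\langle\nabla q_{I,J}(x),v\rangle\). This holds because all generators \(2\lambda(x-x_k)+2(1-\lambda)(x-y_\ell)=\nabla Q^\lambda_{k\ell}(x)\) have the same projection onto \(V_{I,J}\): the differences \(Q_{k\ell}-Q_{k'\ell'}\) are affine and vanish on \(M_{I,J}\), so their gradients are orthogonal to \(V_{I,J}\). The projection is affine in the coefficients, so it passes to convex combinations, and the weighted Minkowski sum is a combination of such generators even when \(1-\lambda<0\). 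Therefore criticality implies \(\nabla_{M}q_{I,J}(x)=0\). Since \(q_{I,J}\) is strongly convex with Hessian \(2I\) on \(M_{I,J}\), it has exactly one critical point there. Each stratum therefore contributes at most one point, and there are finitely many strata. Finally, \(\mathrm{Crit}\subseteq\mathrm{Crit}_{\mathrm{out}}\) by Lemma~\ref{lem:clarke_outer_distinction}, so \(\mathrm{Crit}(\Phi_\lambda)\) is finite as well.

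\textbf{Parts 2 and 3 (MoE minimizers).} For \(0\le\lambda\le1\) we have \(\Phi_\lambda=\min_{k,\ell}Q^\lambda_{k\ell}\), a finite minimum of quadratics each with Hessian \(2I\). Near \(x^*\) only the active pairs \((k,\ell)\in I^*\times J^*\) matter, because the inactive ones are strictly larger nearby. Each active \(Q_{k\ell}\) can be written
\[
Q_{k\ell}(x)=\Phi_\lambda(x^*)+\langle g_{k\ell},x-x^*\rangle+\|x-x^*\|^2, \qquad g_{k\ell}=\nabla Q_{k\ell}(x^*).
\]
Local minimality of \(\Phi_\lambda=\min Q_{k\ell}\) at \(x^*\) requires \(\min_{k\ell}\langle g_{k\ell},v\rangle\ge0\) for all \(v\); this is the first-order condition along rays, since the quadratic term is negligible. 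Applying this with \(v\) and with \(-v\) forces \(\langle g_{k\ell},v\rangle=0\) for every active pair and every \(v\), hence \(g_{k\ell}=0\) for all active pairs. The active pieces then all coincide with \(\Phi_\lambda(x^*)+\|x-x^*\|^2\), which gives the identity on a neighborhood \(U\) and, with it, smoothness and the gradient formula.

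For Part 3 with \(0<\lambda<1\), we have \(g_{k\ell}=2(x^*-\lambda x_k-(1-\lambda)y_\ell)\). If \(|I^*|\ge2\), then \(g_{k\ell}=0=g_{k'\ell}\) gives \(\lambda(x_k-x_{k'})=0\), so \(x_k=x_{k'}\), a contradiction; the same argument applies to \(J^*\). At the endpoints only the active support enters \(g\), and the same argument applies to it.

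The main obstacle is Part 1 for \(\lambda>1\). There the Minkowski sum with a negative coefficient is not the convex hull of the \(\nabla Q_{k\ell}\), so one has to check carefully that the tangential-projection argument still holds. I will verify this explicitly via the affine dependence of the projection, as described above.
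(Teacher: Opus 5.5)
Your proposal is correct and follows the paper's proof essentially step for step. For Part 1 it uses the tangential projection onto $V_{I,J}$ together with the strong convexity of $q_{I,J}$, and for Parts 2 and 3 the two-sided first-order condition on the active quadratic branches. The one slip is your closing remark: for any real coefficients, $\lambda\,\operatorname{conv}(S_1)+(1-\lambda)\operatorname{conv}(S_2)=\operatorname{conv}\bigl(\lambda S_1+(1-\lambda)S_2\bigr)$, so $\widehat{\partial}\Phi_\lambda(x)$ equals $\operatorname{conv}\{\nabla Q^\lambda_{k\ell}(x)\}$ even when $\lambda>1$; this is exactly the identity the paper uses, and there is no obstacle there.
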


\begin{proof}
We prove the three assertions separately.

\medskip
\noindent
\textbf{Step 1: Finiteness of outer Clarke critical points.}
Fix a nonempty stratum \(\Sigma_{I,J}\), and let
\(x\in\Sigma_{I,J}\) satisfy
\[
0\in \widehat{\partial}\Phi_\lambda(x).
\]
Since
\[
\Pi_1(x)=\{x_k:\ k\in I\},
\qquad
\Pi_2(x)=\{y_\ell:\ \ell\in J\},
\]
formula~\eqref{eq:clarke_di_formula} gives
\[
\partial^C(d_1^2)(x)
=
\operatorname{conv}\{2(x-x_k):\ k\in I\},
\qquad
\partial^C(d_2^2)(x)
=
\operatorname{conv}\{2(x-y_\ell):\ \ell\in J\}.
\]
Therefore
\[
\widehat{\partial}\Phi_\lambda(x)
=
\operatorname{conv}
\Bigl\{
\nabla Q_{k\ell}^\lambda(x):\ (k,\ell)\in I\times J
\Bigr\}.
\]
Thus there exist coefficients \(\theta_{k\ell}\ge0\), with
\(\sum_{(k,\ell)\in I\times J}\theta_{k\ell}=1\), such that
\[
0=\sum_{(k,\ell)\in I\times J}
\theta_{k\ell}\nabla Q_{k\ell}^\lambda(x).
\]

Let \(M_{I,J}\) be the affine hull of \(\Sigma_{I,J}\), and let \(V_{I,J}\)
be its direction space. Since all active branches coincide on \(M_{I,J}\),
their tangential gradients coincide:
\[
P_{V_{I,J}}\nabla Q_{k\ell}^\lambda(x)
=
\nabla q_{I,J}(x)
\qquad \forall (k,\ell)\in I\times J.
\]
Projecting the convex-combination identity onto \(V_{I,J}\), we obtain
\[
0=\nabla q_{I,J}(x).
\]
Since \(q_{I,J}\) is a strongly convex quadratic on \(M_{I,J}\), it has at most
one critical point. Hence each stratum contains at most one outer Clarke
critical point. Since there are only finitely many nonempty strata,
\(\mathrm{Crit}_{\mathrm{out}}(\Phi_\lambda)\) is finite. The finiteness of
\(\mathrm{Crit}(\Phi_\lambda)\) follows from
\[
\partial^C\Phi_\lambda(x)\subseteq \widehat{\partial}\Phi_\lambda(x)
\qquad \forall x\in\mathbb R^d.
\]

\medskip
\noindent
\textbf{Step 2: Local minimizers in the MoE regime.}
Assume \(0\le\lambda\le1\). Then
\[
\Phi_\lambda(x)
=
\min_{1\le k\le n_1,\ 1\le \ell\le n_2}
Q_{k\ell}^\lambda(x).
\]
Let \(x^\ast\) be a local minimizer of \(\Phi_\lambda\), and denote by
\[
\mathcal A(x^\ast)
\coloneqq
\bigl\{(k,\ell):\ Q_{k\ell}^\lambda(x^\ast)=\Phi_\lambda(x^\ast)\bigr\}
\]
the active branch set at \(x^\ast\).

For each \((k,\ell)\in\mathcal A(x^\ast)\), Taylor's formula gives
\[
Q_{k\ell}^\lambda(x^\ast+h)
=
Q_{k\ell}^\lambda(x^\ast)
+
\langle\nabla Q_{k\ell}^\lambda(x^\ast),h\rangle
+
\|h\|^2.
\]
Since \(x^\ast\) is a local minimizer of
\(\Phi_\lambda=\min Q_{k\ell}^\lambda\), for all sufficiently small \(h\),
\[
0\le
\Phi_\lambda(x^\ast+h)-\Phi_\lambda(x^\ast)
\le
\min_{(k,\ell)\in\mathcal A(x^\ast)}
\Bigl(
\langle\nabla Q_{k\ell}^\lambda(x^\ast),h\rangle+\|h\|^2
\Bigr).
\]
Taking \(h=r\xi\), dividing by \(r\), and letting \(r\to0^+\), we obtain
\[
\min_{(k,\ell)\in\mathcal A(x^\ast)}
\langle\nabla Q_{k\ell}^\lambda(x^\ast),\xi\rangle\ge0
\qquad \forall \xi\in\mathbb S^{d-1}.
\]
Applying this also to \(-\xi\), we conclude that
\[
\nabla Q_{k\ell}^\lambda(x^\ast)=0
\qquad \forall (k,\ell)\in\mathcal A(x^\ast).
\]
Since
\[
\nabla Q_{k\ell}^\lambda(x)
=
2\bigl(x-\lambda x_k-(1-\lambda)y_\ell\bigr),
\]
we get
\[
x^\ast=\lambda x_k+(1-\lambda)y_\ell
\qquad \forall (k,\ell)\in\mathcal A(x^\ast).
\]
Hence all active branches have the same center \(x^\ast\). Since they also
coincide at \(x^\ast\), they are identical:
\[
Q_{k\ell}^\lambda(x)
\equiv
\Phi_\lambda(x^\ast)+\|x-x^\ast\|^2
\qquad \forall (k,\ell)\in\mathcal A(x^\ast).
\]
All inactive branches are strictly above \(\Phi_\lambda(x^\ast)\) at
\(x^\ast\), and therefore remain inactive in a neighborhood of \(x^\ast\).
Thus there exists a neighborhood \(U\) of \(x^\ast\) such that
\[
\Phi_\lambda(x)=\Phi_\lambda(x^\ast)+\|x-x^\ast\|^2
\qquad \forall x\in U.
\]
The remaining conclusions of \textup{(2)} follow immediately.

\medskip
\noindent
\textbf{Step 3: Location of MoE minimizers.}
Assume first that \(0<\lambda<1\), and let \(x^\ast\) be a local minimizer of
\(\Phi_\lambda\). Define
\[
I^\ast=\{k:\ x_k\in\Pi_1(x^\ast)\},
\qquad
J^\ast=\{\ell:\ y_\ell\in\Pi_2(x^\ast)\}.
\]
Then the active branch set is
\[
\mathcal A(x^\ast)=I^\ast\times J^\ast.
\]
By Step~2, for every \((k,\ell)\in I^\ast\times J^\ast\),
\[
\nabla Q_{k\ell}^\lambda(x^\ast)=0,
\]
or equivalently
\[
x^\ast=\lambda x_k+(1-\lambda)y_\ell.
\]
Since \(0<\lambda<1\), both coefficients \(\lambda\) and \(1-\lambda\) are
strictly positive. Fixing \(\ell\in J^\ast\) and comparing two indices
\(k,k'\in I^\ast\) gives \(x_k=x_{k'}\). Similarly, fixing \(k\in I^\ast\) and
comparing \(\ell,\ell'\in J^\ast\) gives \(y_\ell=y_{\ell'}\). Hence the
nearest support point is unique in both \(A_1\) and \(A_2\), and therefore
\[
x^\ast\notin \mathrm{ND}(d_1^2)\cup \mathrm{ND}(d_2^2)
=
\mathrm{ND}(A_1,A_2).
\]

If \(\lambda=0\), then \(\Phi_\lambda=d_2^2\), and the same argument applied to
the active branches of \(d_2^2\) shows that \(x^\ast\notin\mathrm{ND}(d_2^2)\).
Similarly, if \(\lambda=1\), then \(\Phi_\lambda=d_1^2\), and every local
minimizer satisfies \(x^\ast\notin\mathrm{ND}(d_1^2)\). This proves
\textup{(3)}.
\end{proof}

\begin{rem}[Local minimizers in the MoE and CFG regimes]
\label{rem:moe_cfg_minimizer_rigidity}
Lemma~\ref{lem:critical_points_and_local_minimizers_empirical} shows that the
MoE regime has a rigid local structure. If \(0\le\lambda\le1\), every local
minimizer \(x^\ast\) of \(\Phi_\lambda\) is a strict quadratic minimizer: in a
neighborhood of \(x^\ast\),
\[
\Phi_\lambda(x)=\Phi_\lambda(x^\ast)+\|x-x^\ast\|^2.
\]
In particular, \(\Phi_\lambda\) is smooth near \(x^\ast\). Moreover, in the
interior MoE regime \(0<\lambda<1\), the nearest points in both supports are
unique at \(x^\ast\), and hence
\[
x^\ast\notin \mathrm{ND}(A_1,A_2).
\]

Consequently, near MoE local minimizers the dynamics falls into the smooth
quadratic setting covered by the local estimate
\eqref{eq:quantitative_empirical_smooth_drift}, with the endpoint convention
stated in Lemma~\ref{lem:quantitative_convergence_smooth_field}. Thus the
non-autonomous drift converges locally uniformly and exponentially fast to the
smooth limiting gradient field.

By contrast, in the CFG regime \(\lambda>1\), local minimizers may genuinely
lie on the interface set \(\mathrm{ND}(A_1,A_2)\). In that case the limiting
potential can remain nonsmooth at the minimizer, and no analogous local
quadratic rigidity or uniform exponential drift estimate is available in
general. This is why the rate result in the CFG regime is stated under the
additional assumption that the limiting minimizer lies outside
\(\mathrm{ND}(A_1,A_2)\).
\end{rem}

\subsection{Proof of Theorem~\ref{thm:autonomous_convergence_main}}
\label{subsec:proof_autonomous_convergence}

We prove the convergence theorem for the autonomous limiting inclusions in the
empirical setting.

The existence statement follows from Lemma~\ref{lem:existence_caratheodory}.
It remains to prove the Lyapunov identity and the convergence of global
solutions.

\medskip
\noindent
\textbf{Step 1: Lyapunov identity.}
Let \(Z\) be a global Carath\'eodory solution of either
\eqref{eq:autonomous_moe_clarke_main} or
\eqref{eq:autonomous_cfg_outer_main}. Thus there exists a measurable selection
\(\xi_\tau\) such that
\[
Z_\tau=Z_0+\int_0^\tau \xi_s\,ds,
\qquad
\dot Z_\tau=\xi_\tau
\quad\text{for a.e. }\tau\ge0,
\]
and
\[
\xi_\tau\in -\frac14 \mathcal G(Z_\tau)
\quad\text{for a.e. }\tau\ge0,
\]
where
\[
\mathcal G(z)=\partial^C\Phi_\lambda(z)
\quad\text{in the MoE case},
\qquad
\mathcal G(z)=\widehat{\partial}\,\Phi_\lambda(z)
\quad\text{in the CFG case}.
\]
Equivalently,
\begin{equation}\label{eq:minus_four_xi_in_subdiff}
-4\xi_\tau\in \mathcal G(Z_\tau)
\qquad\text{for a.e. }\tau\ge0.
\end{equation}

We claim that
\begin{equation}\label{eq:lyapunov_identity_pointwise}
\frac{d}{d\tau}\Phi_\lambda(Z_\tau)
=
-4\|\dot Z_\tau\|^2
\qquad\text{for a.e. }\tau\ge0.
\end{equation}
Since \(\Phi_\lambda\) is locally Lipschitz and \(Z\) is locally absolutely
continuous, the composition \(\Phi_\lambda\circ Z\) is locally absolutely
continuous. Hence it is differentiable for a.e. \(\tau\).

Fix \(0\le a<b<\infty\). For each nonempty stratum \(\Sigma_{I,J}\), set
\[
E_{I,J}\coloneqq \{\tau\in[a,b]: Z_\tau\in\Sigma_{I,J}\}.
\]
The sets \(E_{I,J}\) are measurable, and since the family of nonempty strata is
finite and covers \(\R^d\), the interval \([a,b]\) is covered by the sets
\(E_{I,J}\).

We now fix a nonempty stratum \(\Sigma_{I,J}\) and prove
\eqref{eq:lyapunov_identity_pointwise} for a.e. \(\tau\in E_{I,J}\). Let
\(\tau\in E_{I,J}\) be such that

\begin{enumerate}
    \item \(\tau\) is a Lebesgue density point of \(E_{I,J}\);
    \item \(Z\) is differentiable at \(\tau\), with
    \(\dot Z_\tau=\xi_\tau\);
    \item \(\Phi_\lambda\circ Z\) is differentiable at \(\tau\);
    \item the inclusion \eqref{eq:minus_four_xi_in_subdiff} holds at \(\tau\).
\end{enumerate}

These properties hold for a.e. \(\tau\in E_{I,J}\).

We first show that
\begin{equation}\label{eq:velocity_tangent_to_stratum}
\xi_\tau=\dot Z_\tau\in V_{I,J},
\end{equation}
where \(V_{I,J}\) denotes the direction space of the affine hull \(M_{I,J}\) of
the stratum. Since \(\tau\) is a density point of \(E_{I,J}\), there exists a
sequence \(h_n\to0\) such that \(\tau+h_n\in E_{I,J}\). Therefore
\[
Z_{\tau+h_n}\in \Sigma_{I,J}\subset M_{I,J},
\qquad
Z_\tau\in \Sigma_{I,J}\subset M_{I,J}.
\]
Hence
\[
\frac{Z_{\tau+h_n}-Z_\tau}{h_n}\in V_{I,J}.
\]
Passing to the limit and using the differentiability of \(Z\) at \(\tau\), we
obtain \eqref{eq:velocity_tangent_to_stratum}.

Next, fix any active pair \((k,\ell)\in I\times J\). On the stratum
\(\Sigma_{I,J}\), the potential \(\Phi_\lambda\) agrees with the active branch
\(Q_{k\ell}^\lambda\). Since \(\tau\) is a density point of \(E_{I,J}\), the two
functions
\[
s\mapsto \Phi_\lambda(Z_s),
\qquad
s\mapsto Q_{k\ell}^\lambda(Z_s)
\]
coincide on a set of density one at \(\tau\). Both are differentiable at
\(\tau\). Hence their derivatives agree, and
\[
\frac{d}{d\tau}\Phi_\lambda(Z_\tau)
=
\frac{d}{d\tau}Q_{k\ell}^\lambda(Z_\tau)
=
\langle \nabla Q_{k\ell}^\lambda(Z_\tau),\xi_\tau\rangle.
\]
Since \(\xi_\tau\in V_{I,J}\), only the tangential component of
\(\nabla Q_{k\ell}^\lambda(Z_\tau)\) contributes. Therefore
\begin{equation}\label{eq:chain_rule_on_stratum}
\frac{d}{d\tau}\Phi_\lambda(Z_\tau)
=
\bigl\langle
P_{V_{I,J}}\nabla Q_{k\ell}^\lambda(Z_\tau),
\xi_\tau
\bigr\rangle
=
\langle \nabla_{M_{I,J}}q_{I,J}(Z_\tau),\xi_\tau\rangle.
\end{equation}
Here \(q_{I,J}\) denotes the common restriction of the active branches to
\(M_{I,J}\), and \(\nabla_{M_{I,J}}q_{I,J}\in V_{I,J}\) denotes its gradient on
the affine space \(M_{I,J}\).

We now use the differential inclusion. From
\eqref{eq:minus_four_xi_in_subdiff} and from the subdifferential formula on the
stratum, we have
\[
-4\xi_\tau
\in
\operatorname{conv}
\Bigl\{
\nabla Q_{k\ell}^{\lambda}(Z_\tau):
(k,\ell)\in I\times J
\Bigr\}.
\]
Indeed, this follows from \eqref{eq:Clarke_MoE} in the MoE case, and from the
definition of \(\widehat{\partial}\Phi_\lambda\) together with
\eqref{eq:clarke_di_formula} in the CFG case. Thus there exist coefficients
\(\theta_{k\ell}\ge0\), with
\[
\sum_{(k,\ell)\in I\times J}\theta_{k\ell}=1,
\]
such that
\begin{equation}\label{eq:convex_combination_active_gradients}
-4\xi_\tau
=
\sum_{(k,\ell)\in I\times J}
\theta_{k\ell}\nabla Q_{k\ell}^{\lambda}(Z_\tau).
\end{equation}

All active branches have the same restriction \(q_{I,J}\) on \(M_{I,J}\).
Therefore their tangential gradients coincide:
\begin{equation}\label{eq:tangential_gradients_coincide}
P_{V_{I,J}}\nabla Q_{k\ell}^{\lambda}(Z_\tau)
=
\nabla_{M_{I,J}}q_{I,J}(Z_\tau),
\qquad
(k,\ell)\in I\times J.
\end{equation}
Projecting \eqref{eq:convex_combination_active_gradients} onto \(V_{I,J}\) and
using \eqref{eq:tangential_gradients_coincide}, we obtain
\[
P_{V_{I,J}}(-4\xi_\tau)
=
\nabla_{M_{I,J}}q_{I,J}(Z_\tau).
\]
Since \(\xi_\tau\in V_{I,J}\), this gives
\[
\nabla_{M_{I,J}}q_{I,J}(Z_\tau)
=
-4\xi_\tau.
\]
Combining this identity with \eqref{eq:chain_rule_on_stratum}, we conclude
that
\[
\frac{d}{d\tau}\Phi_\lambda(Z_\tau)
=
\langle -4\xi_\tau,\xi_\tau\rangle
=
-4\|\xi_\tau\|^2
=
-4\|\dot Z_\tau\|^2.
\]
This proves \eqref{eq:lyapunov_identity_pointwise} for a.e.
\(\tau\in E_{I,J}\).

Since there are only finitely many nonempty strata, the identity holds for
a.e. \(\tau\in[a,b]\). Integrating over \([a,b]\), we obtain
\begin{equation}\label{eq:lyapunov_identity_integrated}
\Phi_\lambda(Z_b)-\Phi_\lambda(Z_a)
=
-4\int_a^b \|\dot Z_\tau\|^2\,d\tau.
\end{equation}
In particular, \(\tau\mapsto\Phi_\lambda(Z_\tau)\) is nonincreasing.

\medskip
\noindent
\textbf{Step 2: Compactness of the trajectory.}
We next show that the trajectory is bounded. Since \(A_1\cup A_2\) is finite,
there exists \(R_A>0\) such that
\[
\|a\|\le R_A
\qquad\forall a\in A_1\cup A_2.
\]
For every \(x\in\R^d\), choose \(a_i\in\Pi_i(x)\), \(i=1,2\). Then
\[
\Phi_\lambda(x)
=
\lambda\|x-a_1\|^2+(1-\lambda)\|x-a_2\|^2.
\]
Expanding the squares gives
\[
\Phi_\lambda(x)
=
\|x\|^2
-
2\langle x,\lambda a_1+(1-\lambda)a_2\rangle
+
\lambda\|a_1\|^2+(1-\lambda)\|a_2\|^2.
\]
The last two terms are bounded from below by \(-C\), while the linear term is
bounded by \(C\|x\|\). Hence
\[
\Phi_\lambda(x)\ge \|x\|^2-C\|x\|-C,
\]
and therefore
\[
\Phi_\lambda(x)\to+\infty
\qquad\text{as }\|x\|\to\infty.
\]
Thus \(\Phi_\lambda\) is coercive.

By the Lyapunov monotonicity,
\[
\Phi_\lambda(Z_\tau)\le \Phi_\lambda(Z_0)
\qquad\forall \tau\ge0.
\]
Therefore \(Z_\tau\) remains in the compact sublevel set
\[
K\coloneqq \{x\in\R^d:\Phi_\lambda(x)\le \Phi_\lambda(Z_0)\}.
\]
Consequently, the \(\omega\)-limit set \(\omega_\tau(Z)\) is nonempty, compact,
and connected.

Moreover, since \(\Phi_\lambda(Z_\tau)\) is nonincreasing and bounded from
below, there exists \(\ell\in\R\) such that
\[
\Phi_\lambda(Z_\tau)\to \ell
\qquad\text{as }\tau\to\infty.
\]
By continuity of \(\Phi_\lambda\), every \(x\in\omega_\tau(Z)\) satisfies
\[
\Phi_\lambda(x)=\ell.
\]
Hence \(\Phi_\lambda\) is constant on \(\omega_\tau(Z)\).

\medskip
\noindent
\textbf{Step 3: The \(\omega\)-limit set is critical.}
We prove that
\begin{equation}\label{eq:omega_subset_crit_out}
\omega_\tau(Z)\subset \mathrm{Crit}_{\mathrm{out}}(\Phi_\lambda).
\end{equation}

We argue by contradiction. Let
\[
x^\ast\in \omega_\tau(Z)\setminus \mathrm{Crit}_{\mathrm{out}}(\Phi_\lambda).
\]
Since \(0\notin \widehat{\partial}\,\Phi_\lambda(x^\ast)\), and since
\(\widehat{\partial}\,\Phi_\lambda\) is upper semicontinuous with nonempty compact
values, there exist \(r>0\) and \(\eta>0\) such that
\begin{equation}\label{eq:uniform_noncriticality}
\inf_{\zeta\in \widehat{\partial}\,\Phi_\lambda(x)}
\|\zeta\|
\ge \eta
\qquad
\forall x\in B(x^\ast,r).
\end{equation}

If \(\omega_\tau(Z)=\{x^\ast\}\), then \(Z_\tau\to x^\ast\). For all large
\(\tau\), \(Z_\tau\in B(x^\ast,r)\). From the inclusion and
\eqref{eq:uniform_noncriticality}, we get
\[
\|\dot Z_\tau\|\ge \frac{\eta}{4}
\qquad\text{for a.e. large }\tau.
\]
The Lyapunov identity then gives
\[
\frac{d}{d\tau}\Phi_\lambda(Z_\tau)
=
-4\|\dot Z_\tau\|^2
\le -\frac{\eta^2}{4}
\qquad\text{for a.e. large }\tau,
\]
which contradicts the convergence of \(\Phi_\lambda(Z_\tau)\) to \(\ell\).
Thus the singleton case is impossible unless \(x^\ast\) is critical.

We now consider the case where \(\omega_\tau(Z)\) contains another point. Choose
\[
y^\ast\in\omega_\tau(Z),
\qquad
y^\ast\neq x^\ast.
\]
Shrinking \(r>0\) if necessary, we may assume that
\[
y^\ast\notin B(x^\ast,2r).
\]
Since the trajectory remains in the compact set \(K\), and the multifunction
\(-\frac14\mathcal G\) is bounded on \(K\), there exists \(M>0\) such that
\begin{equation}\label{eq:bounded_velocity}
\|\dot Z_\tau\|\le M
\qquad\text{for a.e. }\tau\ge0.
\end{equation}

Let \(\tau_j\to\infty\) be such that
\[
Z_{\tau_j}\to x^\ast.
\]
For \(j\) large enough,
\[
Z_{\tau_j}\in B(x^\ast,r/2)
\qquad\text{and}\qquad
\Phi_\lambda(Z_{\tau_j})<\ell+\frac{\eta^2 r}{32M}.
\]
Since \(y^\ast\in\omega_\tau(Z)\) and \(y^\ast\notin B(x^\ast,2r)\), the
trajectory cannot remain forever in \(B(x^\ast,r)\) after time \(\tau_j\).
Thus it must exit \(B(x^\ast,r)\) at some later time. Starting from
\(B(x^\ast,r/2)\), and using the velocity bound \eqref{eq:bounded_velocity},
the trajectory spends at least time \(r/(2M)\) inside \(B(x^\ast,r)\) before
exiting.

During this time interval, \eqref{eq:uniform_noncriticality} and the inclusion
imply
\[
\|\dot Z_\tau\|\ge \frac{\eta}{4}
\qquad\text{for a.e. }\tau
\]
as long as \(Z_\tau\in B(x^\ast,r)\). Hence, by the Lyapunov identity, the value
of \(\Phi_\lambda\) decreases by at least
\[
4\left(\frac{\eta}{4}\right)^2\frac{r}{2M}
=
\frac{\eta^2 r}{8M}.
\]
Consequently, after the trajectory exits \(B(x^\ast,r)\), we have
\[
\Phi_\lambda(Z_\tau)
\le
\Phi_\lambda(Z_{\tau_j})
-
\frac{\eta^2 r}{8M}
<
\ell-\frac{3\eta^2 r}{32M}.
\]
Since \(\Phi_\lambda(Z_\tau)\) is nonincreasing, this strict inequality
persists for all later times, contradicting
\[
\Phi_\lambda(Z_\tau)\to\ell.
\]
This contradiction proves \eqref{eq:omega_subset_crit_out}.

\medskip
\noindent
\textbf{Step 4: Reduction to a single point.}
By Lemma~\ref{lem:critical_points_and_local_minimizers_empirical},
\(\mathrm{Crit}_{\mathrm{out}}(\Phi_\lambda)\) is finite. Since
\(\omega_\tau(Z)\) is compact, connected, and satisfies
\[
\omega_\tau(Z)\subset \mathrm{Crit}_{\mathrm{out}}(\Phi_\lambda),
\]
it must be a singleton. Thus there exists \(x^\ast\in
\mathrm{Crit}_{\mathrm{out}}(\Phi_\lambda)\) such that
\[
\omega_\tau(Z)=\{x^\ast\},
\qquad
Z_\tau\to x^\ast
\quad\text{as }\tau\to\infty.
\]
In the MoE regime, Lemma~\ref{lem:clarke_outer_distinction} gives
\(\widehat{\partial}\Phi_\lambda=\partial^C\Phi_\lambda\), hence
\(x^\ast\in\mathrm{Crit}(\Phi_\lambda)\). This completes the proof.

\section{Autonomous stability and empirical convergence}
\label{sec:proof_empirical_convergence_and_rate}

This section is devoted to the proof of Theorem~\ref{thm:empirical_convergence_criterion} and Corollary~\ref{cor:empirical_rate_local_min}. The strategy follows the classical compactness approach for asymptotically autonomous systems, in the spirit of Markus~\cite{markus1956asymptotically} and Galaktionov--V\'azquez~\cite[Thm.~3]{galaktionov1991asymptotic}, adapted here to our nonsmooth limiting differential inclusions.

The proof proceeds in three steps. We first identify the local trap structure of local minimizers for the autonomous limiting inclusions. We then use this autonomous stability property, together with time-shift compactness and the convergence of limiting trajectories established earlier, to prove the empirical convergence criterion. Finally, we establish the local convergence rate near a limiting local minimizer by a perturbative argument around the linearized limiting field.

\subsection{Local trap properties of local minimizers}
\label{subsec:local_trap_properties}

We begin by introducing the trapping notion associated with the autonomous limiting inclusions.

\begin{defn}[Local trap for a differential inclusion]
\label{def:local_trap_inclusion}
Let \(\mathcal F:\R^d\rightrightarrows\R^d\) be a set-valued map, and consider the differential inclusion
\[
\dot Z_\tau\in \mathcal F(Z_\tau),\qquad \tau\ge0.
\]
A point \(p\in\R^d\) is called a \emph{local trap} for this differential inclusion if, for every \(\varepsilon>0\) sufficiently small, there exists \(\delta\in(0,\varepsilon)\) such that every global Carath\'eodory solution \(Z\) with
\[
Z_0\in B(p,\delta)
\]
satisfies
\[
Z_\tau\in B(p,\varepsilon)\qquad \forall \tau\ge0,
\]
and
\[
Z_\tau\to p
\qquad\text{as }\tau\to\infty.
\]
\end{defn}

\begin{lem}[Local minimizers are local traps for the autonomous limiting system]
\label{lem:local_minimizers_are_traps}
Assume that \(u_{0,1}\) and \(u_{0,2}\) are finite Dirac mixtures. Then
\[
p\in \mathrm{Min}(\Phi_\lambda)
\]
if and only if \(p\) is a local trap for \eqref{eq:autonomous_moe_clarke_main} when \(0\le \lambda\le 1\), and for \eqref{eq:autonomous_cfg_outer_main} when \(\lambda>1\).
\end{lem}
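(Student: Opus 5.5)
We prove the two implications separately, relying on two facts established above for every global Carath\'eodory solution \(Z\) of the relevant inclusion: the Lyapunov identity \(\Phi_\lambda(Z_b)-\Phi_\lambda(Z_a)=-4\int_a^b\|\dot Z_\tau\|^2d\tau\), and convergence of \(Z_\tau\) to a point of \(\mathrm{Crit}_{\mathrm{out}}(\Phi_\lambda)\) (Theorem~\ref{thm:autonomous_convergence_main}). We also use that this critical set is finite (Lemma~\ref{lem:critical_points_and_local_minimizers_empirical}).

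\emph{Minimizer \(\Rightarrow\) trap.} Let \(p\in\mathrm{Min}(\Phi_\lambda)\). Since \(\mathrm{Crit}_{\mathrm{out}}\) is finite, there is \(\varepsilon_0>0\) such that \(B(p,\varepsilon_0)\) contains no other outer-critical point and \(\Phi_\lambda\ge\Phi_\lambda(p)\) on it. The first thing to establish is that \(p\) is a \emph{strict} local minimizer.
\begin{itemize}
\item In the MoE case this is Lemma~\ref{lem:critical_points_and_local_minimizers_empirical}(2).
\item In the CFG case we use the stratification. On each stratum \(\Sigma_{I,J}\) through a neighborhood of \(p\), \(\Phi_\lambda\) coincides with the strongly convex quadratic \(q_{I,J}\). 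A non-strict minimizer would produce points \(x_n\to p\), \(x_n\neq p\), with \(\Phi_\lambda(x_n)=\Phi_\lambda(p)\). Passing to a single stratum along a subsequence, the restriction of \(q_{I,J}\) to the segment joining \(p\) and \(x_n\) in \(M_{I,J}\) is strictly convex with equal endpoint values. It therefore dips strictly below \(\Phi_\lambda(p)\) in between. This point lies in the affine hull but not necessarily in \(\Sigma_{I,J}\), so the step needs care.
\item A cleaner route, which I would try first, is to argue from \(\Phi_\lambda=\min_{k}\max_{\ell}Q^\lambda_{k\ell}\) near \(p\). There only finitely many active pieces exist, each with Hessian \(2I\), so level sets of these piecewise quadratics near \(p\) cannot contain a curve of minima.
\end{itemize}

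Given strictness, fix \(\varepsilon<\varepsilon_0\) and set \(m=\min_{\|x-p\|=\varepsilon}\Phi_\lambda-\Phi_\lambda(p)>0\). By continuity, choose \(\delta\) with \(\Phi_\lambda<\Phi_\lambda(p)+m\) on \(B(p,\delta)\). By monotonicity, a solution starting in \(B(p,\delta)\) can never reach the sphere \(\partial B(p,\varepsilon)\), so it stays in \(B(p,\varepsilon)\). Its limit is an outer-critical point in \(\overline{B(p,\varepsilon)}\), hence equals \(p\).

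\emph{Trap \(\Rightarrow\) minimizer.} Suppose \(p\) is a trap but not a local minimizer. Then there are points \(z_0\) arbitrarily close to \(p\) with \(\Phi_\lambda(z_0)<\Phi_\lambda(p)\). Take any global solution from such \(z_0\in B(p,\delta)\); one exists by Lemma~\ref{lem:existence_caratheodory}. By the trap property it converges to \(p\). But Lyapunov monotonicity gives \(\Phi_\lambda(Z_\tau)\le\Phi_\lambda(z_0)<\Phi_\lambda(p)\), and continuity of \(\Phi_\lambda\) then forces \(\lim\Phi_\lambda(Z_\tau)<\Phi_\lambda(p)\), a contradiction.

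The main obstacle is the strictness of CFG minimizers on interfaces, where \(\Phi_\lambda\) is a min–max of quadratics. If that argument fails, the fallback is to define \(m\) relative to the connected component of the sublevel set containing \(p\). Finiteness of \(\mathrm{Crit}_{\mathrm{out}}\) then rules out a continuum of minimizers, since any such continuum would consist of outer-critical points.
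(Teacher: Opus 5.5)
Your argument has the same structure as the paper's proof. Lyapunov monotonicity keeps the solution below $\min_{\partial B(p,\varepsilon)}\Phi_\lambda$. Its limit is then identified with the only outer-critical point in $\overline{B}(p,\varepsilon)$. The converse follows from monotonicity and continuity. Your explicit appeal to Lemma~\ref{lem:existence_caratheodory} in the converse is a small improvement, since the trap property is vacuous without a solution.

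\textbf{The gap.} The one step you leave unresolved is strictness of $p$. The paper settles it with the idea in your fallback, but no continuum and no connected component of a sublevel set is needed. Take $\varepsilon_0$ such that $\Phi_\lambda\ge\Phi_\lambda(p)$ on $B(p,\varepsilon_0)$ and $B(p,\varepsilon_0)\cap\mathrm{Crit}_{\mathrm{out}}(\Phi_\lambda)=\{p\}$. Suppose $x\in B(p,\varepsilon_0/2)$ satisfies $\Phi_\lambda(x)=\Phi_\lambda(p)$. Then $\Phi_\lambda\ge\Phi_\lambda(x)$ on $B(x,\varepsilon_0/2)$, so $x$ is itself a local minimizer. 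Clarke's Fermat rule and the sum-rule inclusion give $0\in\partial^C\Phi_\lambda(x)\subseteq\widehat{\partial}\,\Phi_\lambda(x)$, which forces $x=p$. A single sequence at level $\Phi_\lambda(p)$ converging to $p$ is therefore already excluded.

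\textbf{Your stratification route.} It also closes. Each nonempty $\Sigma_{I,J}$ is an affine subspace (the equidistance equations) intersected with finitely many open half-spaces. Hence it is convex and relatively open in $M_{I,J}$. For $p\in\overline{\Sigma_{I,J}}$ and $x_n\in\Sigma_{I,J}$, the midpoint therefore lies in $\Sigma_{I,J}$. There $\Phi_\lambda=q_{I,J}$, and its value is $\Phi_\lambda(p)-\frac14\|x_n-p\|^2$, contradicting local minimality.

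\textbf{Your min--max route.} It becomes an exact identity. Let $I^\ast=\{k:x_k\in\Pi_1(p)\}$ and $J^\ast=\{\ell:y_\ell\in\Pi_2(p)\}$; only these indices are active near $p$. Every branch has Hessian $2I_d$ and the weights sum to $1$, so for all small $h$
\[
\Phi_\lambda(p+h)=\Phi_\lambda(p)+\psi(h)+\|h\|^2,\qquad
\psi(h)=2\lambda\min_{k\in I^\ast}\langle p-x_k,h\rangle+2(1-\lambda)\min_{\ell\in J^\ast}\langle p-y_\ell,h\rangle .
\]
Since $\psi$ is positively homogeneous, local minimality forces $\psi\ge0$. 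Hence $\Phi_\lambda(p+h)-\Phi_\lambda(p)\ge\|h\|^2$. This gives quadratic growth in both regimes without using finiteness of $\mathrm{Crit}_{\mathrm{out}}(\Phi_\lambda)$, whereas the paper's isolation argument yields only strictness.
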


\begin{proof}
We first prove that every local minimizer is a local trap.

Assume that
\[
p\in \mathrm{Min}(\Phi_\lambda).
\]
Since every local minimizer is in particular an outer Clarke critical point, one has
\[
\mathrm{Min}(\Phi_\lambda)\subset \mathrm{Crit}_{\mathrm{out}}(\Phi_\lambda).
\]
By Lemma~\ref{lem:critical_points_and_local_minimizers_empirical}, the set \(\mathrm{Crit}_{\mathrm{out}}(\Phi_\lambda)\) is finite. Hence \(p\) is isolated. Choose \(\varepsilon_0>0\) such that
\[
\overline{B}(p,\varepsilon_0)\cap \mathrm{Crit}_{\mathrm{out}}(\Phi_\lambda)=\{p\}.
\]
Since \(p\) is a local minimizer, after possibly reducing \(\varepsilon_0\) we may also assume that
\[
\Phi_\lambda(x)\ge \Phi_\lambda(p)
\qquad \forall x\in B(p,\varepsilon_0).
\]

Since \(p\) is an isolated local minimizer, after reducing
\(\varepsilon_0>0\) if necessary, \(p\) is a strict minimizer on
\(\overline B(p,\varepsilon_0)\). Hence, for every
\(\varepsilon\in(0,\varepsilon_0]\),
\[
m_\varepsilon
\coloneqq
\min_{x\in\partial B(p,\varepsilon)}\Phi_\lambda(x)
>
\Phi_\lambda(p).
\]
Again by continuity of \(\Phi_\lambda\) at \(p\), there exists \(\delta\in(0,\varepsilon)\) such that
\[
\Phi_\lambda(x)<m_\varepsilon
\qquad \forall x\in B(p,\delta).
\]

Let \(Z\) be a global Carath\'eodory solution of the corresponding autonomous limiting inclusion with
\[
Z_0\in B(p,\delta).
\]
By Theorem~\ref{thm:autonomous_convergence_main},
\[
\Phi_\lambda(Z_b)-\Phi_\lambda(Z_a)
=
-4\int_a^b \|\dot Z_\tau\|^2\,d\tau
\qquad \forall 0\le a\le b<\infty.
\]
In particular, \(\tau\mapsto \Phi_\lambda(Z_\tau)\) is nonincreasing, hence
\[
\Phi_\lambda(Z_\tau)\le \Phi_\lambda(Z_0)<m_\varepsilon
\qquad \forall \tau\ge0.
\]
Therefore \(Z_\tau\) can never reach the sphere \(\partial B(p,\varepsilon)\), since every point of that sphere has energy at least \(m_\varepsilon\). Thus
\[
Z_\tau\in B(p,\varepsilon)
\qquad \forall \tau\ge0.
\]

By Theorem~\ref{thm:autonomous_convergence_main}, there exists
\[
x^\ast\in \mathrm{Crit}_{\mathrm{out}}(\Phi_\lambda)
\]
such that
\[
Z_\tau\to x^\ast
\qquad\text{as }\tau\to\infty.
\]
Since \(Z_\tau\in B(p,\varepsilon)\) for all \(\tau\ge0\), one has
\[
x^\ast\in \overline{B}(p,\varepsilon)\cap \mathrm{Crit}_{\mathrm{out}}(\Phi_\lambda)=\{p\}.
\]
Hence \(x^\ast=p\), and therefore
\[
Z_\tau\to p
\qquad\text{as }\tau\to\infty.
\]
This proves that \(p\) is a local trap.

Conversely, assume that \(p\) is a local trap for the corresponding autonomous limiting inclusion. We show that \(p\) is a local minimizer of \(\Phi_\lambda\).

Suppose by contradiction that \(p\) is not a local minimizer. Then for every \(r>0\) there exists \(z_r\in B(p,r)\) such that
\[
\Phi_\lambda(z_r)<\Phi_\lambda(p).
\]
By the local trap property, for \(r>0\) small enough, every global Carath\'eodory solution \(Z\) with initial datum
\[
Z_0=z_r
\]
satisfies
\[
Z_\tau\to p
\qquad\text{as }\tau\to\infty.
\]
On the other hand, by Lyapunov monotonicity,
\[
\Phi_\lambda(Z_\tau)\le \Phi_\lambda(Z_0)=\Phi_\lambda(z_r)<\Phi_\lambda(p)
\qquad \forall \tau\ge0.
\]
Passing to the limit as \(\tau\to\infty\), and using the continuity of \(\Phi_\lambda\), we obtain
\[
\Phi_\lambda(p)
=
\lim_{\tau\to\infty}\Phi_\lambda(Z_\tau)
\le
\Phi_\lambda(z_r)
<
\Phi_\lambda(p),
\]
a contradiction. Therefore \(p\in \mathrm{Min}(\Phi_\lambda)\).
\end{proof}

\subsection{Proof of Theorem~\ref{thm:empirical_convergence_criterion}}
\label{subsec:proof_empirical_convergence_criterion}

We first prove that
\[
\omega_\tau(Y)\subset \mathrm{Min}(\Phi_\lambda).
\]
It is enough to show that
\[
\operatorname{dist}\bigl(Y_\tau,\mathrm{Min}(\Phi_\lambda)\bigr)\to 0
\qquad\text{as }\tau\to\infty.
\]
Assume by contradiction that this is false. Then there exist \(\varepsilon_0>0\) and a sequence \(\tau_j\to\infty\) such that
\[
\operatorname{dist}\bigl(Y_{\tau_j},\mathrm{Min}(\Phi_\lambda)\bigr)>\varepsilon_0
\qquad \forall j.
\]

For each \(\varepsilon\in(0,\varepsilon_0]\), define
\[
\mathcal O_\varepsilon
\coloneqq 
\Bigl\{
\tau>0:\ \operatorname{dist}\bigl(Y_\tau,\mathrm{Min}(\Phi_\lambda)\bigr)>\varepsilon
\Bigr\}.
\]
Since the map
\[
\tau\longmapsto \operatorname{dist}\bigl(Y_\tau,\mathrm{Min}(\Phi_\lambda)\bigr)
\]
is continuous, the set \(\mathcal O_\varepsilon\) is open. For each \(j\), let \(I_j^\varepsilon\) denote the connected component of \(\mathcal O_\varepsilon\) containing \(\tau_j\). In particular, \(I_j^\varepsilon\) is a nonempty open interval. Moreover, if \(0<\varepsilon_2\le \varepsilon_1\le \varepsilon_0\), then
\[
\mathcal O_{\varepsilon_1}\subset \mathcal O_{\varepsilon_2},
\]
and therefore
\[
I_j^{\varepsilon_1}\subset I_j^{\varepsilon_2}.
\]

We will need the following lemma, which provides a uniform bound on the bad intervals and is the key ingredient in the contradiction argument. This type of estimate already appears in \cite[Lem.~3.2]{galaktionov1991asymptotic}. For completeness, we defer its proof until after the proof of Theorem~\ref{thm:empirical_convergence_criterion}.

\begin{lem}
\label{lem:uniform_bound_bad_intervals}
For every \(\varepsilon\in(0,\varepsilon_0]\), there exists a constant \(C_\varepsilon>0\) such that
\[
|I_j^\varepsilon|\le C_\varepsilon
\qquad \forall j\ge1.
\]
\end{lem}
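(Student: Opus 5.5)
The plan is to argue by contradiction, in the spirit of \cite[Lem.~3.2]{galaktionov1991asymptotic}. We combine the time-shift compactness of Theorem~\ref{thm:timeshift_empirical}, the convergence of autonomous solutions in Theorem~\ref{thm:autonomous_convergence_main}, and the hypothesis of Theorem~\ref{thm:empirical_convergence_criterion} that \(\omega_\tau(Y)\) contains no non-minimizing critical point. Fix \(\varepsilon\in(0,\varepsilon_0]\) and suppose no uniform bound exists. Then, along a subsequence (not relabeled), \(|I_j^\varepsilon|\to\infty\); this includes the case where some \(I_j^\varepsilon\) is unbounded.

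\textbf{Step 1: choosing good shifts.} The shifts must tend to infinity while leaving a long stretch inside the bad interval. Since \(\tau_j\in I_j^\varepsilon\), at least one side of \(\tau_j\) inside \(I_j^\varepsilon\) has length at least \(|I_j^\varepsilon|/2\).
\begin{itemize}
\item If it is the right side, set \(s_j\coloneqq\tau_j\) and \(L_j\coloneqq|I_j^\varepsilon|/2\).
\item If it is the left side, set \(L_j\coloneqq\min\{|I_j^\varepsilon|/2,\tau_j/2\}\) and \(s_j\coloneqq\tau_j-L_j\).
\end{itemize}
In both cases \(s_j\ge\tau_j/2\to\infty\), \(L_j\to\infty\), and \([s_j,s_j+L_j]\subset I_j^\varepsilon\). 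By definition of \(\mathcal O_\varepsilon\), this means
\[
\operatorname{dist}\bigl(Y_{s_j+\tau},\mathrm{Min}(\Phi_\lambda)\bigr)>\varepsilon
\qquad \forall \tau\in[0,L_j].
\]

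\textbf{Step 2: passing to an autonomous limit.} Apply Theorem~\ref{thm:timeshift_empirical} to the shifts \(Y^j_\tau\coloneqq Y_{\tau+s_j}\). After extraction, \(Y^j\to Z\) in \(C_{\mathrm{loc}}(\R_+;\R^d)\), and \(Z\) is a global Carath\'eodory solution of \eqref{eq:autonomous_moe_clarke_main} (MoE) or \eqref{eq:autonomous_cfg_outer_main} (CFG). Fix \(\tau\ge0\). Since \(L_j\to\infty\), we have \(\tau\le L_j\) for all large \(j\), so \(\operatorname{dist}(Y^j_\tau,\mathrm{Min}(\Phi_\lambda))>\varepsilon\) eventually. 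The set \(\mathrm{Min}(\Phi_\lambda)\) is finite, being contained in the finite set \(\mathrm{Crit}_{\mathrm{out}}(\Phi_\lambda)\) by Lemma~\ref{lem:critical_points_and_local_minimizers_empirical}, so the distance to it is continuous. Passing to the limit gives \(\operatorname{dist}(Z_\tau,\mathrm{Min}(\Phi_\lambda))\ge\varepsilon\) for every \(\tau\ge0\). By Theorem~\ref{thm:autonomous_convergence_main}, \(Z_\tau\to x^\ast\) for some \(x^\ast\in\mathrm{Crit}(\Phi_\lambda)\) in the MoE regime, respectively \(x^\ast\in\mathrm{Crit}_{\mathrm{out}}(\Phi_\lambda)\) in the CFG regime. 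Letting \(\tau\to\infty\) in the distance bound gives \(\operatorname{dist}(x^\ast,\mathrm{Min}(\Phi_\lambda))\ge\varepsilon\), so \(x^\ast\) is a non-minimizing critical point of the relevant type.

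\textbf{Step 3: contradiction via the \(\omega\)-limit set.} It remains to show \(x^\ast\in\omega_\tau(Y)\), which contradicts the hypothesis of Theorem~\ref{thm:empirical_convergence_criterion}. Given \(\eta>0\), choose \(\sigma\) with \(\|Z_\sigma-x^\ast\|<\eta/2\). Local uniform convergence gives \(\|Y_{s_j+\sigma}-Z_\sigma\|<\eta/2\) for all large \(j\), and \(s_j+\sigma\to\infty\). A diagonal choice over \(\eta=1/n\) then yields times \(\sigma_n\to\infty\) with \(Y_{\sigma_n}\to x^\ast\), so \(x^\ast\in\omega_\tau(Y)\).

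The main delicate point is Step~1. A naive choice of shifts, such as the left endpoint of \(I_j^\varepsilon\), may stay bounded, and Theorem~\ref{thm:timeshift_empirical} requires shifts tending to infinity. The two-sided splitting above is meant to ensure both \(s_j\to\infty\) and \(L_j\to\infty\) simultaneously.
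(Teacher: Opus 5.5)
Your proof is correct and follows the paper's argument: contradiction with $|I_j^\varepsilon|\to\infty$, a time-shift limit $Z$ that stays at distance $\ge\varepsilon$ from $\mathrm{Min}(\Phi_\lambda)$, convergence of $Z$ to a non-minimizing critical point $x^\ast$ of the relevant type, and $x^\ast\in\omega_\tau(Y)$, which contradicts the hypothesis. The only difference is the choice of shifts: the paper takes $s_j=a_j^\varepsilon+\tfrac14|I_j^\varepsilon|$ (or any $s_j\in I_j^\varepsilon$ with $s_j\to\infty$ when $b_j^\varepsilon=+\infty$), which already tends to infinity because $|I_j^\varepsilon|\to\infty$, so your two-sided splitting around $\tau_j$ is valid but not needed.
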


We now return to the proof of the theorem. Fix
\[
\rho\in(0,\varepsilon_0)
\]
sufficiently small. By Lemma~\ref{lem:local_minimizers_are_traps} and the finiteness of \(\mathrm{Min}(\Phi_\lambda)\), there exists \(\delta_\rho\in(0,\rho/2)\) such that, for every global Carath\'eodory solution \(Z\) of the corresponding autonomous limiting inclusion,
\[
\operatorname{dist}\bigl(Z_0,\mathrm{Min}(\Phi_\lambda)\bigr)<\delta_\rho
\]
implies
\[
\operatorname{dist}\bigl(Z_\tau,\mathrm{Min}(\Phi_\lambda)\bigr)<\rho/2
\qquad \forall \tau\ge0.
\]

Now set
\[
\varepsilon\coloneqq \min\{\rho/2,\delta_\rho/2\}.
\]
For each \(j\), write
\[
I_j^\varepsilon=(a_j^\varepsilon,b_j^\varepsilon).
\]
Since \(\tau_j\in I_j^\varepsilon\) and \(\tau_j\to\infty\), it follows that
\(a_j^\varepsilon\to\infty\). In particular, \(a_j^\varepsilon>0\) for all
large \(j\). Hence, by continuity,
\[
\operatorname{dist}\bigl(Y_{a_j^\varepsilon},\mathrm{Min}(\Phi_\lambda)\bigr)=\varepsilon.
\]

By Theorem~\ref{thm:timeshift_empirical}, after extraction of a subsequence, the shifted trajectories
\[
Y_{a_j^\varepsilon+\cdot}
\]
converge locally uniformly on \([0,\infty)\) to a global Carath\'eodory solution \(Z\) of the corresponding autonomous limiting inclusion. Since
\[
\operatorname{dist}\bigl(Y_{a_j^\varepsilon},\mathrm{Min}(\Phi_\lambda)\bigr)=\varepsilon,
\]
we obtain
\[
\operatorname{dist}\bigl(Z_0,\mathrm{Min}(\Phi_\lambda)\bigr)=\varepsilon<\delta_\rho.
\]
Hence
\[
\operatorname{dist}\bigl(Z_\tau,\mathrm{Min}(\Phi_\lambda)\bigr)<\rho/2
\qquad \forall \tau\ge0.
\]

By Lemma~\ref{lem:uniform_bound_bad_intervals}, there exists \(C_\varepsilon>0\) such that
\[
|I_j^\varepsilon|=b_j^\varepsilon-a_j^\varepsilon\le C_\varepsilon
\qquad \forall j.
\]
Since \(Y_{a_j^\varepsilon+\cdot}\to Z\) locally uniformly on \([0,\infty)\), the convergence is uniform on the compact interval \([0,C_\varepsilon]\). Therefore, after possibly enlarging \(j\),
\[
\operatorname{dist}\bigl(Y_{a_j^\varepsilon+\tau},\mathrm{Min}(\Phi_\lambda)\bigr)\le \rho
\qquad \forall \tau\in[0,C_\varepsilon].
\]
Now, if \(\tau\in I_j^\varepsilon\), then
\[
a_j^\varepsilon<\tau<b_j^\varepsilon\le a_j^\varepsilon+C_\varepsilon,
\]
hence
\[
0<\tau-a_j^\varepsilon<C_\varepsilon.
\]
Therefore
\[
\operatorname{dist}\bigl(Y_\tau,\mathrm{Min}(\Phi_\lambda)\bigr)\le \rho
\qquad \forall \tau\in I_j^\varepsilon
\]
for all sufficiently large \(j\) in the extracted subsequence.

Since \(\rho<\varepsilon_0\), this is impossible because
\[
\tau_j\in I_j^{\varepsilon_0}\subset I_j^\varepsilon
\]
and
\[
\operatorname{dist}\bigl(Y_{\tau_j},\mathrm{Min}(\Phi_\lambda)\bigr)>\varepsilon_0.
\]
The contradiction proves that
\[
\operatorname{dist}\bigl(Y_\tau,\mathrm{Min}(\Phi_\lambda)\bigr)\to0
\qquad\text{as }\tau\to\infty.
\]
Hence
\[
\omega_\tau(Y)\subset \mathrm{Min}(\Phi_\lambda).
\]

Since \(\omega_\tau(Y)\) is nonempty, compact, and connected, while \(\mathrm{Min}(\Phi_\lambda)\) is finite in the empirical setting, it follows that
\[
\omega_\tau(Y)=\{x^\ast\}
\]
for some \(x^\ast\in \mathrm{Min}(\Phi_\lambda)\). Therefore
\[
Y_\tau\to x^\ast
\qquad\text{as }\tau\to\infty.
\]
The equivalence with the convergence of \(X\) follows from \eqref{eq:omega_X_equals_omega_Y}.
We conclude Theorem~\ref{thm:empirical_convergence_criterion}.

\begin{proof}[Proof of Lemma~\ref{lem:uniform_bound_bad_intervals}]
Fix \(\varepsilon\in(0,\varepsilon_0]\). Suppose by contradiction that no such constant exists. Then, after extraction of a subsequence, still indexed by \(j\), we may assume that
\[
|I_j^\varepsilon|\to\infty
\qquad\text{as }j\to\infty.
\]

Write
\[
I_j^\varepsilon=(a_j^\varepsilon,b_j^\varepsilon),
\]
with \(0\le a_j^\varepsilon<b_j^\varepsilon\le +\infty\). Since \(|I_j^\varepsilon|\to\infty\), we may choose \(s_j\in I_j^\varepsilon\) so that
\[
s_j\to\infty,
\qquad
b_j^\varepsilon-s_j\to\infty.
\]
For example, if \(b_j^\varepsilon<\infty\), one may take
\[
s_j\coloneqq a_j^\varepsilon+\frac14|I_j^\varepsilon|,
\]
while if \(b_j^\varepsilon=+\infty\), any choice \(s_j\in I_j^\varepsilon\) with \(s_j\to\infty\) is sufficient.

Then, for every fixed \(T>0\), one has
\[
[s_j,s_j+T]\subset I_j^\varepsilon
\]
for all sufficiently large \(j\). Hence
\[
\operatorname{dist}\bigl(Y_{s_j+\tau},\mathrm{Min}(\Phi_\lambda)\bigr)>\varepsilon
\qquad \forall \tau\in[0,T],
\]
for all sufficiently large \(j\).

By Theorem~\ref{thm:timeshift_empirical}, after extraction of a subsequence, the shifted trajectories
\[
Y_{s_j+\cdot}
\]
converge locally uniformly on \([0,\infty)\) to a global Carath\'eodory solution \(Z\) of the corresponding autonomous limiting inclusion. Passing to the limit in the previous inequality yields
\[
\operatorname{dist}\bigl(Z_\tau,\mathrm{Min}(\Phi_\lambda)\bigr)\ge \varepsilon
\qquad \forall \tau\ge0.
\]

On the other hand, by Theorem~\ref{thm:autonomous_convergence_main}, there exists a point \(x^\ast\) such that
\[
Z_\tau\to x^\ast
\qquad\text{as }\tau\to\infty,
\]
where
\[
x^\ast\in \mathrm{Crit}(\Phi_\lambda)
\quad\text{if }0\le\lambda\le1,
\qquad
x^\ast\in \mathrm{Crit}_{\mathrm{out}}(\Phi_\lambda)
\quad\text{if }\lambda>1.
\]
Moreover, for every fixed \(\tau\ge0\), since \(s_j+\tau\to\infty\) and
\[
Y_{s_j+\tau}\to Z_\tau,
\]
we have
\[
Z_\tau\in \omega_\tau(Y).
\]
Since \(\omega_\tau(Y)\) is closed, passing to the limit as \(\tau\to\infty\) gives
\[
x^\ast\in \omega_\tau(Y).
\]
By the assumption of Theorem~\ref{thm:empirical_convergence_criterion}, no non-minimizing critical point of the relevant limiting notion belongs to \(\omega_\tau(Y)\). Therefore
\[
x^\ast\in \mathrm{Min}(\Phi_\lambda).
\]
This contradicts the fact that
\[
\operatorname{dist}\bigl(Z_\tau,\mathrm{Min}(\Phi_\lambda)\bigr)\ge \varepsilon
\qquad \forall \tau\ge0,
\]
since \(Z_\tau\to x^\ast\in \mathrm{Min}(\Phi_\lambda)\). The contradiction proves the lemma.
\end{proof}
\subsection{Proof of Corollary~\ref{cor:empirical_rate_local_min}}
\label{subsec:proof_empirical_rate_local_min}

We now establish the convergence rate when the limiting point is a local minimizer.

Let \(x^\ast\in \mathrm{Min}(\Phi_\lambda)\) be the limit point given by Theorem~\ref{thm:empirical_convergence_criterion}.

In both regimes, the proof reduces to the same local perturbation argument.
We first record the local estimates needed below. There exist a neighborhood
\(U\) of \(x^\ast\) and constants \(C_0,\eta>0\) such that
\begin{equation}\label{eq:local_gradient_reduction_rate_proof}
\nabla\Phi_\lambda(x)=2(x-x^\ast)
\qquad \forall x\in U,
\end{equation}
and
\begin{equation}\label{eq:local_uniform_gradient_error_rate_proof}
\|\nabla F_\lambda(x,t)-\nabla\Phi_\lambda(x)\|
\le C_0 e^{-\eta/t}
\qquad \forall x\in U,\ \forall t>0.
\end{equation}

Indeed, in the MoE regime \(0\le\lambda\le1\), this follows from
Lemma~\ref{lem:critical_points_and_local_minimizers_empirical}\textup{(2)-(3)}
together with Lemma~\ref{lem:quantitative_convergence_smooth_field}. In the CFG regime \(\lambda>1\), the additional assumption
\(x^\ast\notin \mathrm{ND}(A_1,A_2)\) implies that \(\Phi_\lambda\) is locally a
single quadratic branch. Since \(x^\ast\) is a local minimizer, this branch is
centered at \(x^\ast\), giving \eqref{eq:local_gradient_reduction_rate_proof};
and \eqref{eq:local_uniform_gradient_error_rate_proof} follows from
Lemma~\ref{lem:quantitative_convergence_smooth_field}, after shrinking \(U\) if
necessary.

Since \(Y_\tau\to x^\ast\), there exists \(\tau_0\ge0\) such that
\[
Y_\tau\in U
\qquad \forall \tau\ge\tau_0.
\]
Set
\[
E_\tau\coloneqq Y_\tau-x^\ast.
\]
Using~\eqref{eq:generation_ode_Y}, we obtain, for every \(\tau\ge\tau_0\),
\[
\dot E_\tau
=
-\frac14\nabla F_\lambda(Y_\tau,Te^{-\tau}).
\]
Adding and subtracting \(\nabla\Phi_\lambda(Y_\tau)\), and using \eqref{eq:local_gradient_reduction_rate_proof}, we get
\[
\dot E_\tau
=
-\frac14\nabla\Phi_\lambda(Y_\tau)
-\frac14\Bigl(\nabla F_\lambda(Y_\tau,Te^{-\tau})-\nabla\Phi_\lambda(Y_\tau)\Bigr)
=
-\frac12 E_\tau+R(\tau),
\]
where
\[
R(\tau)\coloneqq 
-\frac14\Bigl(\nabla F_\lambda(Y_\tau,Te^{-\tau})-\nabla\Phi_\lambda(Y_\tau)\Bigr).
\]
By \eqref{eq:local_uniform_gradient_error_rate_proof},
\[
\|R(\tau)\|
\le \frac{C_0}{4}\,e^{-\eta e^\tau/T}
\qquad \forall \tau\ge\tau_0.
\]
After renaming the constants, we may simply write
\[
\|R(\tau)\|\le C_1 e^{-\kappa e^\tau}
\qquad \forall \tau\ge\tau_0
\]
for some \(C_1,\kappa>0\).

Applying the variation-of-constants formula on \([\tau_0,\tau]\), we obtain
\[
E_\tau
=
e^{-(\tau-\tau_0)/2}E_{\tau_0}
+
\int_{\tau_0}^{\tau} e^{-(\tau-s)/2}R(s)\,ds.
\]
Therefore
\[
\|E_\tau\|
\le
e^{-(\tau-\tau_0)/2}\|E_{\tau_0}\|
+
\int_{\tau_0}^{\tau} e^{-(\tau-s)/2}\|R(s)\|\,ds
\le
C e^{-\tau/2}
\]
for a suitable constant \(C>0\). Hence
\[
\|Y_\tau-x^\ast\|\le C e^{-\tau/2}
\qquad \forall \tau\ge0.
\]
Since \(t=Te^{-\tau}\), this is equivalent to
\[
\|X_t-x^\ast\|
=
\|Y_{\log(T/t)}-x^\ast\|
\le C\sqrt t,
\qquad t\in(0,T].
\]
This proves the result.

\section{Conclusions and perspectives}\label{conclusionsperspectives}

\subsection{Conclusions}

In this article, we studied the small-time asymptotic behavior of
diffusion-model generation dynamics driven by linear score mixing in the
heat-flow setting. The central result is a geometric reduction principle:
after a natural similarity-time rescaling, the singular non-autonomous
score-driven dynamics is asymptotically governed by an autonomous nonsmooth
dynamics associated with the distance potential \(\Phi_\lambda\). The
Laplace--Varadhan principle is the mechanism that turns heat-flow scores into
squared-distance geometry. In the general compact-support setting satisfying
the stated lower mass condition, every time-shift limit of the rescaled
dynamics satisfies the corresponding limiting inclusion: the genuine Clarke
differential inclusion in the MoE regime and the outer Clarke inclusion in the
CFG regime. In particular, any convergent generation trajectory must converge
to a critical point of the limiting geometric potential.

In the finite-support empirical setting, we obtained a sharper description
thanks to the piecewise quadratic structure of the limiting potential and its
Voronoi-type geometry. We proved that every global solution of the autonomous
limiting inclusion converges to a critical point. For the original
non-autonomous generation flow, we proved a conditional convergence criterion:
once non-minimizing critical points are excluded from its \(\omega\)-limit set,
the trajectory converges to a local minimizer of the geometric potential. In
the smooth stable case, we further established the convergence rate
\(\mathcal O(\sqrt t)\). The numerical experiments are consistent with this
geometric picture and illustrate the role of nonsmooth local minimizers and
interface effects, especially in the guided regime.

We also complemented the deterministic geometric analysis with PDE,
Hamilton--Jacobi, and stochastic viewpoints. The Li--Yau-type Hessian bounds
give a semiconcavity interpretation of the rescaled logarithmic potentials and
connect the Laplace--Varadhan limit with a stationary eikonal/Hamilton--Jacobi
structure. The \(L^p\)-energy estimates for the backward Fokker--Planck equation
then reveal a clear polynomial-versus-exponential stability distinction between
the MoE and CFG regimes. Finally, the noisy rescaled dynamics suggests a
natural connection with stochastic approximation of differential inclusions.
Altogether, these results provide a rigorous framework for understanding score
mixing and guidance in diffusion models from the viewpoint of support geometry,
semiconcavity, and nonsmooth asymptotic dynamics.
\subsection{Learning the geometric potential with neural networks}

Our analysis suggests a possible application-oriented use of the limiting
geometric description. When the score functions are exact, the mixed generation
dynamics is asymptotically organized by the geometric potential
\(\Phi_\lambda\), and convergent trajectories are driven toward its local
minimizers. This raises the following natural perspective: rather than learning
or simulating the full time-dependent score field, one may try to learn the
reduced potential \(\Phi_\lambda\) itself and then generate samples through a
gradient-type dynamics on this learned energy landscape.

Of course, this should be understood as a heuristic direction rather than a
result of the present paper. In high dimension, the direct computation of
\(\nabla\Phi_\lambda\) may be expensive or unstable, especially if it is
approximated by finite differences. A natural alternative is to approximate the
scalar potential by a neural network
\[
    f_{\mathrm{NN}}(z;\theta)\simeq \Phi_\lambda(z),
\]
and then recover the vector field
\(\nabla_z f_{\mathrm{NN}}(z;\theta)\) by automatic differentiation. This is
in the same spirit as several neural-network approaches in scientific
computing, including physics-informed neural networks and operator-learning
methods; see, for example,
\cite{raissi2019physics,lu2021deeponet,li2021fourier,li2026universal,li2026deep}.

More concretely, given two datasets
\[
A_1=\{x_i\}_{i=1}^{n_1},\qquad
A_2=\{y_i\}_{i=1}^{n_2},
\]
one could sample points \(z_j\) in the ambient space, evaluate
\(\Phi_\lambda(z_j)\) through the distance formula, and train
\(f_{\mathrm{NN}}\) by minimizing a regression loss such as
\[
    \mathcal L(\theta)
    =
    \frac1N\sum_{j=1}^N
    \bigl(f_{\mathrm{NN}}(z_j;\theta)-\Phi_\lambda(z_j)\bigr)^2.
\]
Once trained, the network could be used to define a reduced generative dynamics
of the form
\[
    dZ_\tau
    =
    -\nabla_z f_{\mathrm{NN}}(Z_\tau;\theta^\ast)\,d\tau
    +
    \epsilon(\tau)\,dW_\tau,
\]
where \(\epsilon(\tau)\) is a prescribed noise schedule. This viewpoint
provides a possible bridge between diffusion-based generation and
gradient-based sampling on learned geometric energies.

\subsection{Other future directions}

We conclude by briefly summarizing several directions suggested by the present work.

\begin{enumerate}
   \item \textbf{Generic convergence to local minimizers.}
In the empirical setting, our convergence criterion shows that convergence to a local minimizer follows once non-minimizing critical points are excluded from the \(\omega\)-limit set; see Theorem~\ref{thm:empirical_convergence_criterion}. It would be very desirable to prove that this condition is satisfied for almost every initial datum, which would imply generic convergence of trajectories toward local minimizers of the geometric potential. One natural avenue is via a Łojasiewicz-type inequality for piecewise-quadratic functions (cf.~\cite{lojasiewicz1965ensembles,simon1983asymptotics,attouch2009convergence}), which would reduce the question to a topological argument on the basin of attraction of saddle critical points. This is precisely the behavior observed throughout our numerical simulations, including for nonsmooth minimizers lying on the non-differentiable interface set \(\mathrm{ND}(A_1,A_2)\).

   \item \textbf{Beyond the empirical setting.}
The convergence results for the original generation flow rely on the finite Dirac structure, through the piecewise quadratic geometry of the limiting potential and the associated piecewise affine limiting dynamics. For general compactly supported continuous data, we currently obtain only the characterization of time-shift limits; see Remark~\ref{rem:timeshift_general_case}. Extending the convergence theory to this setting would require additional assumptions on the geometry of the supports, such as suitable prox-regularity properties \cite[Sec.~13.~F]{rockafellar1998variational}, and deserves further investigation.

 \item \textbf{Stochastic generative dynamics.}
As discussed in Section~\ref{subsec:diffusive-generation}, the similarity-time rescaling reveals the noisy generation process as a vanishing-viscosity perturbation of the limiting geometric dynamics. Developing a rigorous stochastic theory for this regime is a natural extension of the present analysis. Concretely, the open problem is to prove a precise pathwise (or in-probability) convergence theorem of the noisy rescaled dynamics~\eqref{eq:generation_rescaled} to a Carath\'eodory solution of the limiting Clarke (resp.\ outer Clarke) inclusion as \(\tau\to\infty\). The Bena\"{i}m--Hofbauer--Sorin framework of stochastic approximation for differential inclusions~\cite{benaim2005stochastic} appears to provide the natural technical tools for such a result.

\item \textbf{Implications for training and sampling schedules.}
The similarity-time rescaling also suggests discretizing uniformly in \(\tau=\log(T/t)\) rather than in the physical time \(t\), which better resolves the singular regime near \(t=0\). This simple idea could be tested directly on high-dimensional problems and realistic datasets, and may fit naturally with standard architectures used in diffusion models, such as U-Net~\cite{ronneberger2015u,ho2020denoising} and, more recently, transformer-based backbones~\cite{peebles2023scalable}.
\end{enumerate}

\section*{Acknowledgments}
E. Zuazua was partially supported by the European Research Council (ERC) under the European Union's Horizon Europe research and innovation programme (grant agreement No.~101096251-CoDeFeL); by the Alexander von Humboldt Professorship program; the European Union's Horizon Europe MSCA project ModConFlex (HORIZON-MSCA-2021-DN-01, project 101073558); the Transregio 154 Project ``Mathematical Modelling, Simulation and Optimization Using the Example of Gas Networks'' of the DFG; the AFOSR 24IOE027 project; the SURE-AI Norwegian Centre for Sustainable, Risk-Averse, and Ethical AI grant 357482, Research Council of Norway; by the Grant PID2023-146872OB-I00-DyCMaMod of MICIU (Spain) and by the COST Actions CA24122 -- Multiscale Stochastics, Patterns, and Analysis of Combinatorial Environments and CA24136 -- Interactions between Control Theory and Machine Learning.

\bibliographystyle{plain}
\bibliography{ref.bib}

\end{document}